\numberwithin{equation}{section}
\newtheorem{theorem}{Theorem}[section]
\newtheorem{proposition}[theorem]{Proposition}
\newtheorem{lemma}[theorem]{Lemma}
\newtheorem{conjecture}[theorem]{Conjecture}
\theoremstyle{definition}
\newtheorem{definition}[theorem]{Definition}
\theoremstyle{remark}
\newtheorem{remark}[theorem]{Remark}
\newtheorem{example}[theorem]{Example}
\newcommand{\mC}{\mathcal C}
\newcommand{\mD}{\mathcal D}
\newcommand{\la}{\langle}
\newcommand{\ra}{\rangle}
\newcommand{\C}{\mathbb{C}}
\DeclareMathOperator{\area}{\mathrm{Area}}
\DeclareMathOperator{\vol}{Vol}
\DeclareMathOperator{\Hom}{Hom}
\newcommand{\del}{\partial}
\newcommand{\RR}{\mathbb{R}}
\newcommand{\CC}{\mathbb{C}}
\newcommand{\CP}{\mathbb{CP}}
\newcommand{\HP}{\mathbb{HP}}
\newcommand{\ZZ}{\mathbb{Z}}
\newcommand{\bz}{{\bar{z}}}
\newcommand{\Spin}{\mathrm{Spin}}
\newcommand{\D}{\mathcal{D}}
\newcommand{\bi}[1]{\overline{#1}}
\begin{document}
\title[Dirac eigenvalue optimisation]{Dirac eigenvalue optimisation and harmonic maps to % $\CP^n$}
complex projective spaces}

\author[M. Karpukhin]{Mikhail Karpukhin}%\textsuperscript{1}}
\address[Mikhail Karpukhin]{Department of Mathematics, University College London, 
Gower Street, London WC1E 6BT, UK}
\email{m.karpukhin@ucl.ac.uk}

\author[A. M\'etras]{Antoine M\'etras}
\address[Antoine M\'etras]{School of Mathematics, University of Bristol,
Fry Building, Woodland Road,
Bristol BS8 1UG,
UK}
\email{antoine.metras@bristol.ac.uk}
\author[I. Polterovich]{Iosif Polterovich%\textsuperscript{2}
}
\address[Iosif Polterovich]{D\'epartement de math\'ematiques et de 
statistique, Universit\'e de Montr\'eal, CP 6128 Succ. Centre-Ville, 
Montr\'eal, Qu\'ebec, H3C 3J7, Canada}
\email{iosif.polterovich@umontreal.ca}
\begin{abstract}
Consider a Dirac operator on an oriented compact surface endowed with a Riemannian metric and spin structure. Provided the area and the conformal class are fixed, how small can the $k$-th positive Dirac eigenvalue be?    This problem  mirrors the maximization problem for the eigenvalues of the Laplacian, which is related to the study of harmonic maps into spheres. We uncover the connection between the critical metrics for Dirac eigenvalues and harmonic maps into complex projective spaces. Using this approach we show that for many conformal classes on a torus the first nonzero Dirac eigenvalue is minimised by the flat metric. We also present a new geometric proof of B\"ar's theorem stating that  the first nonzero Dirac eigenvalue on the sphere is minimised by the standard round metric.
\end{abstract}
\maketitle
\section{Introduction and main results}
\subsection{Dirac operator on surfaces}
Dirac operator is a first order differential operator defined on the sections of the   spinor bundle
of an oriented Riemannian manifold $(M,g)$. The construction of a spinor bundle relies on an additional structure on $(M,g)$, called the spin structure.  An oriented manifold with a fixed spin structure is referred to as {\em spin manifold}. In general, not all manifolds admit spin structures, however, in the present article we are concerned with the case $\dim M =2$. All oriented surfaces have spin structures and, furthermore, the spinor bundle and the Dirac operator can be described using the complex structure associated with the orientation and the metric, see~\cite{Atiy71} . This is the point of view that we assume below. 

Let $M$ be a compact orientable surface of genus $\gamma$ with a complex structure, and let  $K = T^{(1,0)}M^*$ be its canonical line bundle (here and further on, we refer to \cite{GrHa} for basic facts and terminology from complex geometry). A {\em spin structure} on $M$ is a holomorphic line bundle $S$ and a holomorphic isomorphism $S \otimes S \cong K$, which makes $S$ a square root of $K$. 
The {\it spinor bundle}  $\Sigma M$ on $M$ is $S \oplus \bar{S}$. 
 %The spinor bundle $\Sigma M$ has a natural quaternionic structure.
%Note that this definition, introduced by Atiyah in \cite{Atiy71}, is specific to  dimension two. %$n > 2$. 
Two spin structures $S$ and $S'$ are said to be equivalent if there exists a holomorphic bundle isomorphism which commutes with the respective isomorphisms $S \otimes S \cong K$ and $S' \otimes S' \cong K$.  One can show that  there are $2^{2\gamma}$ non-equivalent spin structures on $M$.
%Note that since a spin structure is a square root of the canonical bundle, its degree is $\gamma - 1$. Hence all the line bundles $S$ of spin structure on a fixed Riemann surface are topologically equivalent even though the spin structure may not be equivalent.

A {\em Hermitian metric}  on $M$ is a global section $h$ of $K\otimes \bar K$ satisfying $h_p(\xi,\bar\xi)>0$ for all $\xi\in T_p^{(1,0)}M$. It induces a Riemannian metric $g$ on $M$ by taking the real part $g = \mathrm{Re}(h)$,  and a Hermitian metric on $S$, such that for any section $s$ of $S$ one has $|s|^2 = |s\otimes s|$. In local holomorphic coordinates $z$, if $h = e^{2\omega}dz\otimes d\bar z$, then $g = e^{2\omega}(dx^2 + dy^2)$ and if $s_0$ is a local section of $S$ such that $s_0\otimes s_0 = dz$, then $|s_0|= e^{-\omega}$. Viewing a Hermitian metric on $S$ as a map $S\otimes \bar S\to \underline{\C}$, where $\underline{\C}$ is a trivial line bundle, one defines
\begin{align*}
    \bar\del_g : \Gamma(S) \overset{\bar{\del}}{\to} \Gamma(S \otimes \bar{K}) \to \Gamma(\bar{S}),
\end{align*}
where  the second map corresponds to the bundle isomorphisms $S \otimes \bar{K} \cong S \otimes \bar{S} \otimes \bar{S} \cong |S|^2 \otimes \bar{S} \cong_g \bar{S}$. In local coordinates, 
\begin{align*}
    \bar{\del}_g (fs_0) = |s_0|^2_g \del_\bz f \bar{s}_0,
\end{align*}
 for a locally defined complex-valued function $f$.
Similarly, one can define $\del_g : \Gamma(\bar{S}) \to \Gamma(S)$ from the corresponding $\del$-operator.

The Dirac operator $\D_g$ is defined as
\begin{align*}
    \D_g : S \oplus \bar{S} &\to S \oplus \bar{S} \\
    \begin{pmatrix} \psi_+ \\ \psi_- \end{pmatrix} &\mapsto 2 \begin{pmatrix}
        0 & \del_g \\
        - \bar{\del}_g & 0 
    \end{pmatrix}
    \begin{pmatrix} \psi_+ \\ \psi_- \end{pmatrix}.
\end{align*}
The Dirac operator is elliptic and  essentially self-adjoint provided the manifold is complete; moreover, since $M$ is compact, it  has a discrete spectrum consisting of real eigenvalues (see, for instance, \cite[Theorem 1.3.7]{Ginoux}). An easy calculation shows that if $\psi = (\psi_+, \psi_-)$ is a Dirac eigenspinor with eigenvalue $\lambda$, then $(\psi_+, -\psi_-)$ is also a Dirac eigenspinor with the eigenvalue $-\lambda$. Thus, Dirac eigenvalues are symmetric with respect to $0$. Furthermore, $(\bar\psi_-,-\bar\psi_+)$ is an eigenspinor with the eigenvalue $\lambda$. In fact, the map $(\psi_+, \psi_-)\mapsto (\bar\psi_-,-\bar\psi_+)$ commutes with $\D_g$ and can be seen as a {\em quaternionic} structure on the spinor bundle, see~\cite[Section 5]{FLPP} and Remark \ref{quaternionic}.  We use the following notation for the spectrum of $\mD_g$:
\begin{align*}
    -\infty  \cdots \leq \lambda_{-2} \leq \lambda_{-1} < 0 < \lambda_1 \leq \lambda_2 \leq \cdots +\infty,
\end{align*}
where $\lambda_k = - \lambda_{-k}$ and the eigenvalues are counted with their {\em quaternionic multiplicity}, i.e. if $\lambda_1$-eigenspace is spanned by $(\psi_+, \psi_-)$ and $(\bar\psi_-,-\bar\psi_+)$, then $\lambda_2>\lambda_1$.
If the kernel  of $\mD_g$ is non-empty, we do not enumerate the zero eigenvalues.
Finally, we remark that the (quaternionic) dimension $d$ of the kernel of  $\D$ depends {\em only} on the spin structure and the  conformal class. For example,  on the sphere $d=0$, and on the torus $d = 1$  for the trivial spin structure, while $d= 0$ for the other three spin structures.   In general, $d \leq \left[ \frac{\gamma + 1}{2}\right]$ on a surface of genus $\gamma$ \cite{Hit}.

\subsection{Optimisation of Dirac eigenvalues} Let $\mathcal{C}$ be a given conformal class on the surface $M$. To obtain a quantity invariant under  rescaling of the metric, we normalise the Dirac eigenvalues by the area of $M$:
\begin{align*}
    \bar{\lambda}_k(M, g, S) := \lambda_k(M, g, S) \area(M,g)^{1/2}.
\end{align*}
It is known that for all metrics $g \in \mathcal{C}$, the normalised eigenvalue $\bar{\lambda}_k(M, g, S)$ is bounded away from zero \cite{Lott, Am2}. In particular, if $M$ is diffeomorphic to the sphere $\mathbb{S}^2$, then there is a unique spin structure $S$ on $M$,  and Bär's inequality~\cite{B0} states that for all metrics $g$ on $\mathbb{S}^2$,
\begin{align}
\label{ineq:Bar}
    \bar\lambda_1(\mathbb{S}^2,g,S) \geq \bar\lambda_1(\mathbb{S}^2, g_{\mathbb{S}^2},S) = 2 \sqrt{\pi}.
\end{align}
The equality holds if and only if $g$ is a constant curvature metric on $\mathbb{S}^2$, i.e. iff $g$ is homothetic to the standard round metric $g_{\mathbb{S}^2}$ on the unit sphere in $\RR^3$. B\"ar's inequality could be seen as a Dirac analogue of the celebrated Hersch's inequality~\cite{Hersch} for the first eigenvalue of the Laplacian, although their original proofs differ substantially. In section \ref{subsec:energy} we present a new  proof of~\eqref{ineq:Bar} using geometric methods inspired by recent results on optimisation of Laplace eigenvalues, cf. \cite{KS}. 
%which is more in line with Hersch's proof.

Finding an explicit value for 
\begin{align*}
    \Lambda_1(M,\mathcal{C}, S) := \inf_{g \in \mathcal{C}} \lambda_1(M, g, S) \area(M,g)^{1/2}
\end{align*}
on other surfaces is an open question. The study of {\it $\bar\lambda_1$-conformally minimal metrics} $g$, i.e. metrics achieving $\Lambda_1(M,\mathcal{C}, S)$, and in particular the question of their existence has been initiated in \cite{Am}. 

\begin{theorem}[\cite{Am}] \label{theo:Am}
    Let $M$ be a compact orientable surface of genus $\gamma$, $S$ a spin structure on it and $\mathcal{C}$ a conformal class of metrics. If
    \begin{align*}
        \inf_{g \in \mathcal{C}} \lambda_1(M,g,S) \area(M,g)^{1/2} < 2 \sqrt{\pi},
    \end{align*}
    then there exists $g_{\min} \in \mathcal{C}$ achieving the infimum. Moreover, the metric $g_{\min}$  is smooth outside of possibly at most $\gamma - 1$ conical singularities. 
\end{theorem}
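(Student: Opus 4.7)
The plan is to run a concentration-compactness argument on a minimising sequence in the conformal class, in the spirit of the Nadirashvili--Petrides programme for Laplace eigenvalues. Fix a reference metric $g_0 \in \mathcal{C}$ with $\area(M,g_0)=1$ and write any competitor as $g = e^{2\omega}g_0$. Pick a sequence $g_n = e^{2\omega_n}g_0$ with $\area(M,g_n)=1$ and $\lambda_1(M,g_n,S) \to \Lambda_1(M,\mathcal{C},S)$. The corresponding probability measures $\mu_n = e^{2\omega_n}\,dv_{g_0}$ have a weakly convergent subsequence with limit $\mu$, which decomposes as $\mu = f\,dv_{g_0} + \sum_{i=1}^N c_i \delta_{p_i}$ with $c_i>0$. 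The conformal covariance of $\D_g$ (an eigenspinor of weight $-1/2$ pulls back to an eigenspinor of $\D_{g_0}$ with the same eigenvalue after twisting) makes the functional $\bar\lambda_1$ behave well under this decomposition.

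At each atom $p_i$, rescale coordinates by $c_i^{-1/2}$ to zoom in; since the Dirac operator is conformally covariant and the rescaled metrics on a shrinking neighbourhood converge (after compactification) to a metric on $S^2$ of mass $c_i$, one extracts a \emph{bubble} carrying a nontrivial Dirac eigenvalue of magnitude at most $\Lambda_1$. Each such bubble is a spin $S^2$, and by B\"ar's inequality \eqref{ineq:Bar} its contribution is at least $2\sqrt{\pi}$. A Courant-type pigeonhole argument combining the bubbles with the absolutely continuous part $f\,dv_{g_0}$ then yields the quantisation/gap inequality
\begin{align*}
    \Lambda_1(M,\mathcal{C},S) \;\geq\; \min\!\Big(\bar\lambda_1(f\,dv_{g_0}),\, 2\sqrt{\pi}\Big).
\end{align*}
By the strict hypothesis $\Lambda_1 < 2\sqrt\pi$, no atoms can form: $\mu$ is absolutely continuous with density $f \geq 0$, and the candidate minimiser is $g_{\min} = f\cdot g_0$.

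The final step is regularity of $g_{\min}$ and identification of the singular set. From the Euler--Lagrange equation for $\bar\lambda_1$ in a conformal class one obtains $f = |\psi|_{g_0}^2$ for a first eigenspinor $\psi$ of $\D_{g_{\min}}$; since $\psi$ pulls back, up to a conformal weight, to a section of a holomorphic line bundle of degree $\gamma-1$ on $M$, its zero set is finite of cardinality at most $\gamma-1$ (counted with multiplicity). Away from these zeros, $f$ is smooth and bounded below, so $g_{\min}$ is a smooth Riemannian metric; near a zero of order $m$, standard elliptic estimates identify the singularity as conical with cone angle $2\pi(m+1)$. I expect the main obstacle to be the bubble analysis of Step 2: one must control how the \emph{quaternionic} multiplicity of $\lambda_1$ interacts with the pigeonhole between bubbles and bulk, and ensure that a genuine eigenfunction—not merely a Rayleigh-quotient competitor—is produced on each bubble, which requires careful uniform elliptic estimates for $\D_{g_n}$ in the degenerating geometry.
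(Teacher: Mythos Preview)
The paper does not contain a proof of this theorem: it is quoted from Ammann's paper~\cite{Am} and used as a black box (in the proofs of Theorem~\ref{thm:energy} and Theorem~\ref{thm:torus}). So there is nothing in the present paper to compare your attempt against.

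For context, Ammann's original argument is organised differently from your sketch. Rather than running concentration--compactness directly on the measures $e^{2\omega_n}\,dv_{g_0}$, he recasts the minimisation of $\bar\lambda_1$ as a variational problem for a spinorial nonlinear functional whose Euler--Lagrange equation is a nonlinear Dirac equation $\D_{g_0}\psi = |\psi|^{2}\psi$ (in suitable normalisation); the strict threshold $\Lambda_1<2\sqrt{\pi}$ is exactly the spinorial Aubin-type condition ruling out loss of compactness for Palais--Smale sequences of that functional, and the minimiser $g_{\min}$ is then recovered as $|\psi|^{4}g_0$. The bound on conical singularities comes from counting zeros of the resulting eigenspinor via the degree of $S$.

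Two points in your outline would need repair if you pursued it. First, the regularity step as written is not correct: an eigenspinor with $\lambda\neq 0$ is \emph{not} a holomorphic section of a line bundle of degree $\gamma-1$; only harmonic spinors are holomorphic. The zero count still works, but through the local structure of eigenspinor zeros (cf.\ the expansion in the proof of Lemma~\ref{lem:extend}) combined with a degree/index argument for $S$, not by invoking holomorphicity of $\psi$ itself. Second, the inequality you display after the pigeonhole step has the wrong direction for the argument you want: you need to show that if bubbling occurs then $\Lambda_1\geq 2\sqrt{\pi}$, which contradicts the hypothesis; as stated, your displayed inequality does not exclude $\Lambda_1<2\sqrt{\pi}$ with atoms present unless you also argue that the bulk term $\bar\lambda_1(f\,dv_{g_0})$ cannot be the minimum on the right.
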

%Using the $\lambda_1$-eigenspinors and the Weierstrass-spin representation of surfaces \cite{Tai}, Ammann obtains a multivalued conformal immersion of $(M,g)$ as a periodic constant mean curvature surface in $\mathbb{R}^3$.  

Our goal is to study {\em critical metrics} for the functionals $\bar\lambda_k(M,g,S)$ for all $k>0$, both in the conformal class and in the space of all metrics. Recall that for Laplace and Steklov eigenvalues, the criticality conditions yield connections to the theory of harmonic maps and minimal surfaces in the sphere and the unit ball, respectively~\cite{ElIl07, FS3}. In fact, we observe that, similarly to those problems, critical metrics for Dirac eigenvalues correspond to a certain class of harmonic maps and minimal surfaces, but the ambient spaces are now {\em complex projective spaces} $\CP^{n}$.

\subsection{Critical metrics and harmonic maps} 
Note that the normalised eigenvalue $\bar\lambda_k$ does not necessarily vary smoothly under smooth variations of the metric, since different branches of $\bar\lambda_k$ intersect at points of multiplicity. The now standard techniques, see~\cite{Nad96,ElIl07, FS3, KM}, allow
one to circumvent this difficulty, properly define critical metrics and obtain the corresponding Euler-Lagrange equations. 
In the case of  $\bar\lambda_k$-conformally critical metrics, we obtain the following characterisation.

% and can be used to obtain an eigenvalue lower bound \cite{H}. It is also related to the second fundamental form of a constant curvature isometric immersion into a manifold with parallel spinor field \cite{M}.
%\begin{definition} A metric $g_0$ is called {\it critical} (respectively, {\it conformally critical}) for $\bar\lambda_k$ if it is a critical point of the normalized eigenvalue $\bar\lambda_k(g)$ in the space of all Riemannian metrics (resp. in its conformal class). 
%\end{definition}

\begin{proposition} \label{prop:conf-crit}
    Let $M$ be a compact oriented surface with a fixed conformal class $\mathcal{C}$ and the spin structure $S$. Suppose that $g \in \mathcal{C}$ is a critical metric for $\bar{\lambda}_k(M,g, S)$ in its conformal class. Then there exist eigenspinors $\psi_1, \dots, \psi_m \in E_k(g)$ such that
    \begin{align}
    \label{eq:sum_squares}
        \sum_{j = 1}^m |\psi_j|^2 = 1 \qquad \text{on } M.
    \end{align}
    Conversely, if $\psi_1, \dots, \psi_m \in E_k(g)$ are eigenspinors satisfying~\eqref{eq:sum_squares} and $\lambda_k < \lambda_{k+1}$ or $\lambda_k > \lambda_{k-1}$, then $g$ is conformally critical for $\bar\lambda_k$.
\end{proposition}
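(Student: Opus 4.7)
The approach mirrors the Nadirashvili--El~Soufi--Ilias variational framework for Laplace eigenvalues, adapted to the Dirac setting. The first step is the conformal variation formula. For a smooth family $g_t=e^{2t\omega}g$ with $\omega\in C^\infty(M)$, Feynman--Hellmann combined with the conformal covariance of $\mathcal{D}_g$ in dimension two (and the conformally invariant pairing $\int h_g(\psi,\psi)\,dv_g$ visible from the local formulas in the introduction) gives, for an $L^2(g)$-unit eigenspinor $\psi\in E_k(g)$,
\begin{equation*}
\frac{d\lambda_k}{dt}\Big|_{t=0} = -\lambda_k\int_M\omega\,|\psi|^2\,dv_g.
\end{equation*}
Combined with $\frac{d}{dt}\area(g_t)^{1/2}|_{t=0}=\area(g)^{-1/2}\int_M\omega\,dv_g$ and the normalisation $\area(g)=1$, this yields
\begin{equation*}
\frac{d\bar\lambda_k}{dt}\Big|_{t=0} = -\bar\lambda_k\int_M\omega\bigl(|\psi|^2-1\bigr)\,dv_g.
\end{equation*}

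\textbf{Multiplicity and convex separation.} When $\lambda_k$ is simple, criticality plus vanishing of the above integral for every $\omega$ with $\int\omega\,dv_g=0$ immediately forces $|\psi|^2\equiv 1$. In general, Kato perturbation theory asserts that the $m$ branches of $\lambda_k(g_t)$ emerging from $g$ have one-sided derivatives at $t=0$ equal to the eigenvalues of the Hermitian form $(\psi,\psi')\mapsto -\lambda_k\int_M\omega\,\langle\psi,\psi'\rangle_g\,dv_g$ on $E_k(g)$. Consequently $g$ is conformally critical iff, for every $\omega$, the set
\begin{equation*}
\mathcal{G}(\omega)=\Bigl\{-\bar\lambda_k\textstyle\int_M\omega\bigl(|\psi|^2-1\bigr)\,dv_g : \psi\in E_k(g),\ \|\psi\|_{L^2(g)}=1\Bigr\}
\end{equation*}
contains both a nonnegative and a nonpositive value. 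A standard Hahn--Banach separation argument (compare \cite{ElIl07,KM}) then places $0$ in the closed convex hull, producing eigenspinors $\psi_1,\dots,\psi_m\in E_k(g)$ and weights $c_j\ge 0$ with $\sum_j c_j=1$ such that $\sum_j c_j|\psi_j|^2\equiv 1$; absorbing $\sqrt{c_j}$ into $\psi_j$ yields the stated identity.

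\textbf{Converse and main obstacle.} Conversely, given $\sum|\psi_j|^2\equiv 1$, the identity expresses $0$ as a convex combination of the branch derivatives for every $\omega$, so the maximal branch is nondecreasing and the minimal is nonincreasing through $t=0$; the gap hypothesis $\lambda_k<\lambda_{k+1}$ (or $\lambda_{k-1}<\lambda_k$) then locally squeezes $\bar\lambda_k(g_t)$ between these extreme branches and forces $g$ to be critical. The principal technical obstacle is keeping track of the quaternionic structure on $\Sigma M$: the involution $\psi\mapsto(\bar\psi_-,-\bar\psi_+)$ commutes with $\mathcal{D}_g$ and preserves $|\psi|^2$, so each eigenspace $E_k$ splits into quaternionic pairs whose members contribute identically to $\mathcal{G}(\omega)$. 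The convex separation must therefore be performed on the real vector space of densities spanned by $\{|\psi|^2\}$ rather than on the complex eigenspace itself; once this real structure is correctly isolated, the argument is formally identical to the Laplace case.
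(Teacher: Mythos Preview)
Your outline is correct and matches the paper's argument closely: derive the conformal variation formula $\dot\lambda = -\lambda\int_M\omega|\psi|^2\,dv_g$ (this is the paper's equation~\eqref{eq:eig_deriv}, taken from Bourguignon--Gauduchon), deduce that criticality forces every mean-zero $\omega$ to see both signs among the branch derivatives, and then run a Hahn--Banach separation on the finite-dimensional convex hull of the densities $\{|\psi|^2:\psi\in E_k(g)\}$ to place the constant function $1$ inside. The only superfluous ingredient in your sketch is the discussion of the quaternionic structure as the ``principal technical obstacle'': it plays no role here, since the separation is carried out on the real set of scalar densities $|\psi|^2$ from the outset (exactly as you end up saying), and the involution $\psi\mapsto(\bar\psi_-,-\bar\psi_+)$ merely causes some of those densities to coincide without affecting the argument.
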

\begin{remark} %Let us mention that in the case of the Dirac operator, 
\label{rmk:critBG}
The Euler-Lagrange equations for the eigenvalues  of the Dirac operator have been first obtained by Bourguignon and Gauduchon in~\cite{BG}. However,
the definition of criticality used in that paper is more restrictive compared to ours. It coincides with ours if the eigenvalue is simple, and in this case \cite[Proposition 27]{BG} yields the same criticality condition as Proposition~\ref{prop:conf-crit}.
The main difference is that with our definition all minimisers for $\bar\lambda_k$ (including the local ones) are necessarily $\bar\lambda_k$-critical, even if the corresponding eigenvalues are multiple. As a result, we obtain many more examples of critical metrics with multiple eigenvalues,
%In fact, , critical eigenvalues $\lambda_k$ for $k>1$ are typically multiple
see Section \ref{sec:qharm}.
%In particular, while~\cite{BG} contains some examples of critical metrics (which are also critical for our definition), the characterisation of critical metrics in  is a novel result.
\end{remark}
\begin{remark} Note that the extremality condition \eqref{eq:sum_squares} is analogous to the one arising in the case of the Laplacian \cite{Nad96}. The reason is that the Dirac operator on surfaces behaves in a similar manner as the Laplacian 
under a conformal change of the metric: $\D_{e^{2\omega}}g = e^{-\omega}\D_g$, i.e. the operator is multiplied by a function on the left.  Another manifestation of this phenomenon can be found in \cite[Theorem 1.1]{ PA22}.
\end{remark}
\begin{remark} 
\label{rmk:lambda1_min}
    If we consider a locally minimal metric for $\lambda_1$, then we can always take $m = 1$ in the criticality condition~\eqref{eq:sum_squares}. Indeed, for such a  metric, tangent lines to different eigenvalue branches for $\lambda_1$ cannot intersect at one point,  as that would imply that some variations of the metric decrease $\bar \lambda_1$, contradicting the minimality. Hence, all those tangent lines are horizontal and the derivatives of all the branches of $\lambda_1$ evaluated at such a metric must vanish. The exact expression for the derivative obtained in~\eqref{eq:eig_deriv} implies that all $\lambda_1$-eigenspinors must have constant length.
\end{remark}

 A statement analogous to Proposition~\ref{prop:conf-crit} holds for conformally critical metrics for Laplace eigenvalues. In that case,  condition~\eqref{eq:sum_squares} can be phrased as existence of a map to a unit sphere by eigenfunctions with the same eigenvalue, which turns out to be equivalent to the  harmonicity of the map. Since eigenspinors are sections of non-trivial bundles, they do not form a map to a sphere, but rather to a projective space (see \cite[Chapter 1.4]{GrHa}).
 
  Let $\psi_1, \dots, \psi_m$ be $\D_g$-eigenspinors such that  $\psi_j = (\psi_{j+}, \psi_{j-})$ where $\psi_{j+} \in \Gamma(S), \psi_{j-} \in \Gamma(\bar{S})$. It follows  that $(\psi_{j+}, \bar{\psi}_{j-})$ is a section of $S \oplus S$. If $Z = \{p \in M \,|\, \psi_1(p) = \dots = \psi_m(p) = 0\}$,  then we can define a map $\Psi\colon M \setminus Z \to \CP^{2m-1}$ given by
\begin{equation}
\label{eq:mapcp}
\Psi = [\psi_{1+} : \bar\psi_{1-}: \dots : \psi_{m+}: \bar\psi_{m-}]. 
\end{equation}
We show that the map $\Psi$ extends continuously to the whole surface $M$, see Lemma \ref{lem:extend}.
One of the main observations of the present paper is that if the eigenspinors satisfy the criticality condition~\eqref{eq:sum_squares}, then the map $\Psi$ is harmonic. This should be compared  to the  analogous result for the Laplacian~\cite{Nad96, ElIl03}.

\begin{theorem} \label{prop:harmo}
    Let $(M,g)$ be  a compact oriented surface with spin structure $S$.  Suppose that $\psi_1, \dots, \psi_m$ are eigenspinors on $M$ with $\D_g \psi_j = \lambda \psi_j, \lambda \neq 0$ and $|\psi_1|_g^2 + \ldots + |\psi_m|_g^2 = 1$ on $M$. Then the map $\Psi\colon M \to \CP^{2m-1}$ given in homogeneous coordinates by $[\psi_{1+} : \bar{\psi}_{1-} : \ldots : \psi_{m+} : \bar{\psi}_{m-}]$ is harmonic. 
\end{theorem}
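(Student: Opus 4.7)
My plan is to work locally on $M$, express $\Psi$ via a smooth nowhere-vanishing lift $F\colon U \to \C^{2m}$, translate both the Dirac equation and the normalisation $\sum_j|\psi_j|^2=1$ into PDEs on the components of $F$, and then verify the harmonic map equation for maps into $\CP^{2m-1}$ by direct substitution.

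First, fix a holomorphic coordinate $z$ on $M$ and a local holomorphic trivialisation $s_0$ of $S$ with $s_0\otimes s_0 = dz$, so that $|s_0|_g^2 = e^{-2\omega}$ and $g = e^{2\omega}|dz|^2$. Writing $\psi_{j+} = f_j s_0$ and $\bar\psi_{j-} = g_j s_0$, the map $\Psi$ is represented by the lift $F := (f_1, g_1, \ldots, f_m, g_m)$. The hypothesis $\sum_j|\psi_j|_g^2 = 1$ becomes $|F|^2 = e^{2\omega}$, so $F$ never vanishes and $\Psi$ is globally smooth (with $Z = \emptyset$, no extension issue arises here). Unraveling $\D_g\psi_j = \lambda\psi_j$ with the local formulas for $\partial_g$ and $\bar\partial_g$ yields
\[
\partial_{\bar z} f_j = -\tfrac{\lambda}{2}e^{2\omega}\bar g_j,\qquad \partial_{\bar z} g_j = \tfrac{\lambda}{2}e^{2\omega}\bar f_j,
\]
equivalently $\partial_{\bar z} F = \tfrac{\lambda}{2}e^{2\omega}\, F^\sharp$ where $F^\sharp := (-\bar g_1, \bar f_1, \ldots, -\bar g_m, \bar f_m)$.

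Next I would record the following standard description of harmonicity for a map $[F]\colon M \to \CP^N$ from a Riemann surface, represented by a smooth nowhere-vanishing local lift: writing the Euler--Lagrange equations for the Fubini--Study energy in an affine chart and transporting them back to the homogeneous lift yields the chart-independent, gauge-invariant (under $F \mapsto hF$) condition
\[
\partial_z \partial_{\bar z} F - \frac{\la \partial_z F, F\ra}{|F|^2}\,\partial_{\bar z} F - \frac{\la \partial_{\bar z} F, F\ra}{|F|^2}\,\partial_z F \;\in\; \C \cdot F.
\]
With this in hand the verification is immediate. Note $\la F^\sharp, F\ra = \sum_j(-\bar g_j \bar f_j + \bar f_j \bar g_j) = 0$, hence $\la \partial_{\bar z} F, F\ra = 0$; and $|F|^2 = e^{2\omega}$ gives $\la \partial_z F, F\ra = \partial_z|F|^2 = 2\omega_z e^{2\omega}$. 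Complex-conjugating the Dirac equations produces $\partial_z F^\sharp = -\tfrac{\lambda}{2}e^{2\omega}\, F$, whence
\[
\partial_z \partial_{\bar z} F = \lambda\omega_z e^{2\omega}\, F^\sharp - \tfrac{\lambda^2}{4}e^{4\omega}\, F.
\]
Substituting into the displayed harmonicity condition, the $F^\sharp$-terms cancel and one is left with $-\tfrac{\lambda^2}{4}e^{4\omega}\, F \in \C\cdot F$, as required.

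The main hurdle I anticipate is not the substitution above but establishing the harmonic map equation for $[F]\colon M \to \CP^N$ in terms of an arbitrary unnormalised homogeneous lift; this is a careful bookkeeping calculation involving the Fubini--Study Christoffel symbols and the gauge freedom $F \mapsto hF$, but is essentially mechanical. Once that is settled, the Dirac equation together with the pointwise norm constraint $|F|^2 = e^{2\omega}$ make the tangential obstruction to harmonicity vanish automatically, which is the content of the theorem.
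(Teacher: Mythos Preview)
Your proof is correct and follows essentially the same route as the paper. Both arguments work in a local holomorphic chart with trivialisation $s_0$, rewrite the Dirac equation as $\partial_{\bar z}F = \tfrac{\lambda}{2}e^{2\omega}F^\sharp$ (the paper writes $I(F)$ for your $F^\sharp$), observe the key orthogonality $\langle \partial_{\bar z}F, F\rangle = 0$, and then use the normalisation $|F|^2|s_0|^2 = 1$ to finish. The only cosmetic difference is that the paper formulates harmonicity via the identification $T^{(1,0)}\CP^{2m-1}\cong\Hom(L,L^\perp)$ and checks $(\nabla d\Psi)(\partial_z,\partial_{\bar z})=0$ by computing two terms and matching them, whereas you use the equivalent lift criterion
\[
\partial_z\partial_{\bar z}F - \frac{\langle\partial_zF,F\rangle}{|F|^2}\partial_{\bar z}F - \frac{\langle\partial_{\bar z}F,F\rangle}{|F|^2}\partial_zF \in \C\cdot F,
\]
which is exactly $\pi_{L^\perp}$ of the paper's expression. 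Your anticipated ``main hurdle'' (deriving this lift form of the harmonic map equation) is indeed straightforward and is implicitly carried out in the paper's computation of $(\nabla d\Psi(\partial_{\bar z},\partial_z))(F)$.
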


Observe that not all harmonic maps to $\CP^{2m-1}$ have the form~\eqref{eq:mapcp}. In Definition~\ref{def:spinharm} we identify a class of harmonic maps called {\it quaternionic harmonic}, which turn out to be exactly the maps formed by Dirac eigenspinors satisfying~\eqref{eq:sum_squares}. As a result, there is a two-way correspondence between conformally critical metrics and quaternionic harmonic maps to $\CP^{2m-1}$ for some $m>0$. For the precise statement we refer to Proposition~\ref{prop:spin-harmonic} below.

Finally, we consider criticality condition in the space of all Riemannian metrics and spin structures. It turns out that for a metric to be critical in this larger space, the quaternionic harmonic map $\Psi$ needs to be additionally  conformal, see Section~\ref{sec:glob_crit} for precise statements.
\subsection{Conformal minimisers on  the torus}
It has been observed in~\cite{KS} that the conformal optimisation of the Laplace eigenvalues is closely connected to the min-max theory for the energy functional on the space of sphere-valued maps. In view of Theorem~\ref{prop:harmo} and Proposition~\ref{prop:spin-harmonic}, it is natural to expect that conformal minimisation of Dirac eigenvalues is related to the min-max theory for the energy functional on the space of maps to projective spaces. In Section~\ref{sec:energyproof} we present first results in this direction yielding two applications: we obtain a geometric proof of B\"ar's inequality~\eqref{ineq:Bar} and, more importantly, we show that the flat metric on a torus is $\bar\lambda_1$-conformally minimising on many conformal classes for either spin structure. Below we  state our results for the {\em trivial} spin structure on the torus; see Section \ref{subsec:torus} for analogous statements for the non-trivial spin structures.

For any vector $(a,b)\in \mathbb{R}^2$, $b\ne 0$, one has the lattice $\Gamma=\Gamma_{(a,b)} = \mathbb{Z}(1,0) + \mathbb{Z}(a,b)$ and the corresponding flat torus $\mathbb{T}^2 = \mathbb{R}^2/\Gamma_{a,b}$ with the flat metric $g_{a,b}$ induced by the projection map.
 It is well-known that any conformal class contains a unique flat metric of unit area, and the moduli space of conformal classes is parametrised by the points $(a,b)$ such that $0\le a\le 1/2$ and $a^2+b^2 \ge 1$. Let $[g_{a,b}]$ be the conformal class containing the metric $g_{a,b}$.

The trivial spin structure $S_0$ on a torus is a trivial line bundle,  and the spinors can be simply viewed as pairs of complex functions. An elementary computation shows that
\[
\bar\lambda_1(\mathbb{T}^2,g_{a,b}, S_0) = \frac{2\pi}{\sqrt{b}}.
\]
The following theorem is one of the main results of the paper.
\begin{theorem} \label{thm:torustrivial}
   % Let $b_S = 2\pi$ if $S=S_0$ is the trivial spin structure on $\mathbb{T}^2$, and $b_S = \frac{\pi}{2}$ otherwise.  Then 
   For  all $0\le a \le 1/2$ and $b > 2\pi$, one has 
   %b_S$ and all $a$, $|a|\le 1/2$ \textcolor{red}{Non-strict?}one has
    $$
    \Lambda_1(\mathbb{T}^2,[g_{a,b}],S_0) = \bar\lambda_1(\mathbb{T}^2,g_{a,b}, S_0) = \frac{2\pi}{\sqrt{b}}.
    $$
    %where $d_S = 2$ if $S=S_0$ and $d_S=1$ otherwise. 
    Furthermore, the flat metric is the unique smooth minimiser.
\end{theorem}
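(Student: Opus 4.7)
The plan is to argue by contradiction. Suppose $\bar\lambda_1(g) < 2\pi/\sqrt{b}$ for some $g \in [g_{a,b}]$. Since $b > 2\pi$ gives $2\pi/\sqrt{b} < \sqrt{2\pi} < 2\sqrt{\pi}$, Theorem~\ref{theo:Am} provides a conformal minimiser $g_{\min} \in [g_{a,b}]$, which is smooth because genus one allows at most $\gamma - 1 = 0$ conical singularities. By Remark~\ref{rmk:lambda1_min} there is a single eigenspinor $\psi = (\psi_+, \psi_-)$ for $\lambda_1(g_{\min})$ with $|\psi|_{g_{\min}}^2 \equiv 1$ on the torus, and Theorem~\ref{prop:harmo} then forces the map
\begin{align*}
\Psi = [\psi_+ : \bar\psi_-] : \mathbb{T}^2 \to \CP^1
\end{align*}
to be harmonic.

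The next step is an energy identity of the shape $E(\Psi) = \tfrac{1}{2}\,\bar\lambda_1(g_{\min})^2$. It should follow from the Lichnerowicz identity $\mD_g^2 = \nabla^*\nabla + R_g/4$: integrating $|\mD_{g_{\min}}\psi|^2 = \lambda_1^2$ against $|\psi|^2 \equiv 1$ and using that the total scalar curvature vanishes on the torus gives $\int |\nabla \psi|^2 \, dA = \lambda_1(g_{\min})^2\,\area(g_{\min}) = \bar\lambda_1(g_{\min})^2$. A direct calculation in the affine chart $\phi = \psi_+/\bar\psi_-$ on $\CP^1$ then converts this into the claimed identity for the Fubini--Study energy of $\Psi$.

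The heart of the argument is a sharp conformal lower bound: any harmonic map $\mathbb{T}^2 \to \CP^1$ of the form $[\phi_+ : \bar\phi_-]$ arising from a trivial-spin eigenspinor $\phi$ must satisfy
\begin{align*}
E(\Psi) \geq \frac{2\pi^2}{b},
\end{align*}
with equality realised by the flat eigenmap $\Psi_0(x,y) = [e^{4\pi i y/b} : 1]$, whose Fubini--Study energy one verifies directly to equal $2\pi^2/b$. Combined with the energy identity this forces $\bar\lambda_1(g_{\min}) \geq 2\pi/\sqrt{b}$, contradicting the hypothesis. For uniqueness, equality in the energy bound pins $\Psi$ down as $\Psi_0$ up to an isometry of $\CP^1$; comparison of pull-back metrics then identifies $g_{\min}$ with a constant multiple of $g_{a,b}$, and the constant is fixed by the area normalisation.

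The main obstacle is proving this energy lower bound. A natural approach, modelled on the energy method of \cite{KS}, is to exploit the triviality of $S_0$ to lift $\Psi$ through the Hopf fibration $S^3 \to \CP^1$, yielding a smooth unit-length map $(\psi_+, \bar\psi_-) : \mathbb{T}^2 \to S^3$ whose energy is controlled by its winding class in the Hopf fibre. The condition $b > 2\pi$ should enter precisely at this step as the regime in which winding in the short direction of the dual lattice realises the cheapest configuration, so that any competitor arising from a higher Fourier mode has strictly larger energy. An alternative route would be to run a min--max for the $\CP^1$-energy over a suitable family of competitor maps and show that the resulting min--max value matches $2\pi^2/b$ on the conformal class $[g_{a,b}]$.
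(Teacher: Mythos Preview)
Your setup matches the paper: existence of a smooth minimiser from Theorem~\ref{theo:Am}, a single eigenspinor of constant length via Remark~\ref{rmk:lambda1_min}, and the resulting harmonic map $\Psi\colon\mathbb{T}^2\to\CP^1$ from Theorem~\ref{prop:harmo}. But two things go wrong after that.

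First, your energy identity is off. The correct relation (Lemma~\ref{lem:U-energy} in the paper) is $E^{(0,1)}(\Psi)=\bar\lambda_1(g_{\min})^2$, not $E(\Psi)=\tfrac12\bar\lambda_1^2$. The Lichnerowicz computation gives $\int|\nabla\psi|^2=\bar\lambda_1^2$, but $\int|\nabla\psi|^2$ is not the energy of the projectivised map $\Psi$; passing to homogeneous coordinates throws away the component of $\nabla\psi$ along $\psi$. Moreover, the full energy $E(\Psi)$ and $E^{(0,1)}(\Psi)$ differ by $4\pi\deg\Psi$, so before you can write $E(\Psi)=2\bar\lambda_1^2$ you must first know $\deg\Psi=0$, which is itself a non-trivial fact (it uses the Eells--Wood theorem that harmonic $\mathbb{T}^2\to\mathbb{S}^2$ have degree $0$ or $|d|\ge 2$, combined with a non-holomorphicity argument).

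Second, and more seriously, the step you yourself flag as ``the main obstacle'' is the whole proof. You propose an energy lower bound $E(\Psi)\ge 2\pi^2/b$ and suggest a Hopf-lift or min--max argument, but neither is carried out, and neither is how the paper proceeds. The paper's key input is the theorem of Ferus--Leschke--Pedit--Pinkall \cite{FLPP} (recorded here as Theorem~\ref{thm:quater}): any harmonic map $\mathbb{T}^2\to\mathbb{S}^2$ of energy strictly below $4\pi$ lands in an equatorial $\mathbb{S}^1$. Since $E(\Psi)=2\bar\lambda_1^2\le 8\pi^2/b<4\pi$ precisely when $b>2\pi$, this forces $\Psi$ to be a harmonic circle-valued map, hence of the form $[e^{2\pi i\gamma^*}:1]$ for some $\gamma^*\in\Gamma^*$; computing $g_\Psi$ then shows $g_{\min}$ is flat. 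Without this rigidity result (or a genuine substitute), your proposed lower bound is unsupported, and the argument is incomplete.
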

The proof of Theorem \ref{thm:torustrivial} is given in Section \ref{subsec:torus}; in fact, we prove a more general Theorem \ref{thm:torus} which also applies to non-trivial spin structures. 
It follows from the argument that for the trivial spin structure, the only natural smooth candidate for the minimiser is the flat metric. Together with the existence result of Theorem \ref{theo:Am}, this leads us to the  following conjecture.
 %Concerning the other conformal classes, \eqref{eq:E_torus} implies that
 %if a minimising metric exists, then it corresponds to a degree $0$ harmonic map $\Psi\colon \mathbb{T}^2\to \CP^1$ of energy less than $8\pi$. According to Proposition~\ref{prop:H_harm} any such map induces a spin-structure on $\mathbb{T}^2$., either trivial $S_0$ or non-trivial $S_1$. For the trivial structure $S_0$, to the best of our knowledge, the only known maps with these properties are the maps to equator $\mathbb{S}^1\subset\CP^1$, which by~\eqref{eq:lambda1_S0} and~\eqref{eq:E_torus} have energy below $8\pi$ only for $b>\pi$. Therefore, it is natural to pose the following conjecture.
\begin{conjecture}
\label{conj:torus}
For the trivial spin structure $S_0$ on the torus $\mathbb{T}^2$ one has
\begin{equation*}
\Lambda_1(\mathbb{T}^2,[g_{a,b}],S_0) = 
\begin{cases}
\frac{2\pi}{\sqrt{b}} & \text{ if $b\geq \pi$};\\
2\sqrt{\pi} & \text{ if $b\leq\pi$}.
\end{cases}
\end{equation*}
\end{conjecture}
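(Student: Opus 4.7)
The plan is to establish matching upper and lower bounds, exploiting the fact that the two branches of the conjectured value coincide at $b=\pi$. It is convenient to split into the regimes $b \geq \pi$, where the flat metric should realise the infimum (an extension of Theorem~\ref{thm:torustrivial}), and $b \leq \pi$, where the infimum should equal B\"ar's sphere value $2\sqrt{\pi}$ and fail to be attained by a smooth metric.

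For the upper bound, the flat metric $g_{a,b}$ immediately gives $\Lambda_1 \leq \bar\lambda_1(\mathbb{T}^2,g_{a,b},S_0) = 2\pi/\sqrt{b}$, which is sharp when $b\geq \pi$. To obtain $\Lambda_1 \leq 2\sqrt{\pi}$ in the range $b\leq \pi$, I would construct a bubbling family $g_n \in [g_{a,b}]$ by inserting a conformal factor that concentrates essentially all of the area into a small geodesic disk $D_n$, so that $(D_n,g_n)$ converges, after rescaling to unit area, to the round sphere. Since the trivial spin structure on $\mathbb{T}^2$ restricts to the unique spin structure on a small disk, standard degeneration analysis for the Dirac operator, combined with B\"ar's value $2\sqrt{\pi}$ on $\mathbb{S}^2$, yields $\bar\lambda_1(\mathbb{T}^2,g_n,S_0) \to 2\sqrt{\pi}$.

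The lower bound for $b>\pi$ is obtained by extending the proof of Theorem~\ref{thm:torustrivial} from the regime $b>2\pi$ to the full range. The restriction there presumably enters through an energy estimate for the quaternionic harmonic map $\Psi\colon \mathbb{T}^2 \to \CP^{2m-1}$ associated with a conformal minimiser via Proposition~\ref{prop:conf-crit} and Theorem~\ref{prop:harmo}: the Dirac eigenvalue controls $E(\Psi)$ from below, while the conformal/topological structure of $\Psi$ controls it from above. Sharpening the upper energy bound, most likely through a refined analysis of the admissible homotopy classes of $\Psi$ on $(\mathbb{T}^2,g_{a,b})$, should push the threshold down to $b>\pi$. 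For $b\leq \pi$, the key input is that $\gamma-1=0$ for the torus, so by Theorem~\ref{theo:Am} any strict inequality $\Lambda_1 < 2\sqrt{\pi}$ forces the existence of a smooth minimiser in $[g_{a,b}]$. The extended Theorem~\ref{thm:torustrivial} would then force this smooth minimiser to be flat, yielding $\Lambda_1 = 2\pi/\sqrt{b} \geq 2\sqrt{\pi}$, a contradiction; combined with the bubbling upper bound this gives equality.

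The main obstacle is the sharpening of the energy estimate in the intermediate regime $\pi < b \leq 2\pi$, which is the new analytic content needed. One has to rule out non-flat conformally critical metrics in $[g_{a,b}]$ whose associated quaternionic harmonic map has energy compatible with this range; a direct comparison that works for very thin tori ($b>2\pi$) no longer suffices, and the argument likely requires either a rigidity theorem for quaternionic harmonic maps $\mathbb{T}^2 \to \CP^{2m-1}$ of small energy, or a refined min-max principle for the energy functional on such map spaces, analogous to~\cite{KS} for the Laplacian.
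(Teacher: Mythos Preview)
The statement you are attempting to prove is labelled a \emph{Conjecture} in the paper; the authors do not prove it, and there is no proof in the paper to compare your proposal against. What the paper does provide is (i) Theorem~\ref{thm:torustrivial}, establishing the case $b>2\pi$, and (ii) a short discussion after the proof of Theorem~\ref{thm:torus} explaining exactly what is missing for $\pi<b\le 2\pi$ and why the formula should nevertheless hold.

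Your overall architecture is the right one, and it matches the paper's heuristic. The upper bound $\Lambda_1\le 2\sqrt\pi$ is already a theorem (it is the spinorial Aubin inequality of~\cite{GH}, invoked in the proof of Theorem~\ref{thm:energy}), so your bubbling construction is not needed, though it is the correct mechanism. The reduction for $b\le\pi$ is also sound once one has the key rigidity statement: if $\Lambda_1<2\sqrt\pi$ then Theorem~\ref{theo:Am} produces a smooth minimiser, and if every smooth minimiser were flat one would get $\Lambda_1=2\pi/\sqrt b\ge 2\sqrt\pi$, a contradiction. Note, however, that this last step requires the rigidity statement for \emph{all} $b$, not just $b>\pi$; your write-up blurs this by invoking ``the extended Theorem~\ref{thm:torustrivial}'' in the regime $b\le\pi$ after only proposing to extend it to $b>\pi$.

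The genuine gap---which you correctly flag as the main obstacle but perhaps underestimate---is sharp and concrete, and the paper spells it out. By~\eqref{eq:E_torus}, a smooth minimiser gives a degree-$0$ harmonic map $\Psi\colon\mathbb T^2\to\CP^1$ with $E(\Psi)=2\Lambda_1^2<8\pi$. The paper's rigidity input is Theorem~\ref{thm:quater} (from~\cite{FLPP}), which forces $\Psi$ to be equatorial \emph{only when} $E(\Psi)<4\pi$, i.e.\ $\Lambda_1^2<2\pi$, i.e.\ $b>2\pi$. To close the gap one must rule out degree-$0$ harmonic maps $\mathbb T^2\to\mathbb S^2$ with energy in $[4\pi,8\pi)$ that induce the trivial spin structure $S_0$ via Proposition~\ref{prop:H_harm} and are not maps to a great circle. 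The authors state that no such maps are known; proving that none exist is precisely the open problem, and it is not a matter of ``sharpening an energy estimate'' but of extending the FLPP rigidity into the next energy window.
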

In other words, we conjecture that either the minimiser is flat, or there is no smooth minimiser and the minimising sequence degenerates to a ``bubble''. If  true, this contrasts with the case of the Laplacian, for which bubbling can not occur for conformally maximal metrics for the first eigenvalue on surfaces of positive genus \cite{Pet14}. 
\begin{remark}  Theorem \ref{thm:torustrivial} and, more generally, Theorem \ref{thm:torus}, can be compared with the similar statements for the first Laplace eigenvalue. In that case, flat metrics are only known to be maximisers if $a^2+b^2=1$ \cite{EIR}. Moreover, as follows from \cite{FN, Pet14}, flat metrics  can not be maximisers for $b \ge \pi/2$.
\end{remark}

\subsection{Plan of the paper}
We start by proving Proposition~\ref{prop:conf-crit} in Section~\ref{sec:proofcrit}, which is done  similarly  to the  analogous results for Laplace and Steklov eigenvalues~\cite{Nad96, ElIl07, KNPP, FS3}. Section~\ref{sec:harm} contains the necessary background material on harmonic maps to $\CP^n$, which allows us to prove Theorem~\ref{prop:harmo} in Section~\ref{sec:geomcrit}. We then investigate additional properties satisfied by the maps given by eigenspinors~\eqref{eq:mapcp},  leading us to the notion of {\em quaternionic harmonic maps}, see Definition \ref{def:spinharm}. In particular, Section~\ref{sec:eigmaps} contains Proposition~\ref{prop:H_harm} stating that any quaternionic harmonic map $\Psi\colon M\to\CP^{2m-1}$ induces a spin structure on $M$ such that the components of $\Psi$ are Dirac eigenspinors. This yields  the converse to Theorem~\ref{prop:harmo}, see Proposition~\ref{prop:spin-harmonic}.  Some explicit examples of quaternionic harmonic maps are presented in Section~\ref{sec:qharm}.

In Section~\ref{sec:energyproof},  we specialise to the first non-zero Dirac eigenvalue. For a conformal class $\mC$ and a spin structure $S$ on $M$, Theorem~\ref{thm:energy} gives a characterisation of $\Lambda_1(M,\mC,S)$ in terms of the energy of quaternionic harmonic maps. Recall that by Remark~\ref{rmk:lambda1_min},  the minimisers for $\lambda_1$ yield a map to $\CP^1$. Furthermore, Definition~\ref{def:spinharm} of a quaternionic harmonic map to $\CP^1$ essentially reduces to non-holomorphicity (up to a condition on branch points). Thus, informally, Theorem~\ref{thm:energy} states that the conformal minimum of the first normalised Dirac eigenvalue equals to the square root of the lowest possible energy of a non-holomorphic harmonic map to $\CP^1$. B\"ar's inequality then follows from a well-known fact that for $M=\mathbb{S}^2$, the lowest energy of a non-constant harmonic map $M\to\CP^1=\mathbb{S}^2$ is $4\pi$ and can be achieved on an anti-holomorphic map given by  reflection across the equator~\cite{C}. Our main application of Theorem~\ref{thm:energy} is Theorem~\ref{thm:torus}, which is an extension of Theorem \ref{thm:torustrivial}. It characterises flat metrics as unique $\bar\lambda_1$-conformal minimisers for many conformal classes on the torus.
The proof uses similar ideas as in the sphere case, with the main new ingredient being~\cite[Corollary 6.6]{FLPP},  which states that any harmonic map $\mathbb{T}^2\to\mathbb{S}^2$ of energy below $4\pi$ is a map to the equator $\mathbb{S}^1\subset\mathbb{S}^2$. 
%Finally, we formulate a conjecture describing conformal minimisers for the trivial spin structure and all conformal classes: we predict that the conformal minimisers must be flat as soon as they exist. 

Finally, in Section~\ref{sec:glob_crit} we investigate globally critical metrics for Dirac eigenvalues. Our main result is Proposition~\ref{prop:globcrit}, which states that globally critical metrics correspond to  quaternionic harmonic maps to $\CP^{2m-1}$ which are branched minimal immersions.

\section{Conformally critical metrics and harmonic maps}

\subsection{Criticality condition in the conformal class}
\label{sec:proofcrit}

The following definition is a Dirac analogue of the notion of critical metrics for Laplace eigenvalues, see \cite{Nad96, ElIl03, ElIl07} (note that the term ``extremal" is sometimes used instead of ``critical"). 
For an analytic variation $g(t)$ of $g$, the eigenvalues form analytic branches, see e.g.~\cite{BG} for an explanation of this in the context of Dirac eigenvalues. However, for multiple eigenvalues, the multiplicity is usually destroyed by the perturbation and the {\em enumerated} eigenvalues $\lambda_k$ are not analytic due to the fact that different analytic branches correspond to the $k$-th eigenvalue for $t>0$ and $t<0$. Nevertheless, the right and left derivatives of $\lambda_k(g(t))$ exist and can be defined in the following way: if $l$ is the multiplicity of $\lambda_k(g)$, then there exist analytic families of real numbers $\lambda^{(1)}(t), \dots, \lambda^{(l)}(t) \in \mathbb{R}$ and spinors $\phi^{(1)}(t), \dots, \phi^{(l)}(t)$ orthonormal in $L^2(g(t))$, such that $\D_{g(t)} \phi^{(j)}(t) = \lambda^{(j)} \phi^{(j)}(t)$ for $j = 1, \dots, l$. Then the left derivative of $\lambda_k(g(t))$ at $g$ is
\begin{align*}
\frac{d}{dt} \lambda_k(g(t))\Big|_{t = 0^-} = \frac{d}{dt}\lambda^{(j)}(t)\Big|_{t = 0}
\end{align*}
for some $j$ such that $\lambda_k(g(t)) = \lambda^{(j)}(t)$ for $t < 0$. The right derivative is defined similarly. 

The existence of  the left and right derivatives makes the following definition possible.
\begin{definition}
Let $M$ be an oriented surface endowed with a conformal class $\mC$ and a spin structure $S$. We say that $g\in\mC$ is {\em $\bar\lambda_k$-conformally critical} if for any analytic family of smooth metrics $g(t)\in \mC$, $g(0) = g$, $t\in(-\varepsilon,\varepsilon)$,  one has 
    \begin{align*}
        \left(\frac{d}{dt}\bar\lambda_k(M,g(t), S)\Big|_{t = 0^-} \right) \left(\frac{d}{dt}\bar\lambda_k(M,g(t), S)\Big|_{t = 0^+}\right) \leq 0.
    \end{align*}
\end{definition}

\begin{proof}[Proof of Proposition~\ref{prop:conf-crit}]
    Given an analytic one-parameter family of metrics $g(t) = e^{2\omega(t)}g, t \in (-\epsilon, \epsilon)$ with $\omega(0) = 0$, let $\psi(t)$ be an analytic family of eigenspinors associated to the analytic eigenvalue branch $\lambda^{(j)}(t)$, $\lambda^{(j)}(0) = \lambda_k(g)$. Then the derivative of $\lambda^{(j)}(t)$ is 
    \begin{align} \label{eq:eig_deriv}
        \frac{d \lambda^{(j)}}{dt}(t) = - \lambda_k(g) \int_M \dot\omega(t) |\psi|^2_{g(t)} dv_{g(t)}.
    \end{align}
    Formula~\eqref{eq:eig_deriv} was obtained in~\cite{BG}, but we provide a proof here for the sake of completeness. Taking the derivative with respect to $t$ of the eigenvalue equation $\D_{g(t)} \psi(t) = \lambda^{(j)}(t) \psi(t)$ and using that $\D_{g(t)} = e^{-\omega(t)} \D_{g(0)}$ yields 
    \begin{align*}
        0 = - \dot\omega(t) D_{g(t)} \psi(t) + D_{g(t)} \dot\psi(t) - \dot\lambda^{(j)}(t) \psi(t) - \lambda^{(j)}(t) \dot\psi(t).
    \end{align*}
    We then take the Hermitian product with $\psi(t)$ and integrate over $(M,g(t))$, using that $D_{g(t)}$ is self-adjoint:
    \begin{align*}
        0 &= \int_M - \dot\omega(t) \lambda^{(j)}(t) |\psi(t)|^2_{g(t)} + \langle \dot\psi, \D_{g(t)} \psi \rangle - \dot\lambda^{(j)}(t) |\psi(t)|^2_{g(t)} - \lambda^{(j)}(t) \langle \dot\psi, \psi \rangle dv_{g(t)} \\
          &= - \lambda^{(j)}(t) \int_M \dot\omega(t) |\psi|^2 dv_{g(t)} - \dot\lambda^{(j)}(t) \int_M |\psi|^2 dv_{g(t)},
    \end{align*}
    which gives the desired formula.

         We now show that if $g$ is $\bar\lambda_k$-conformally critical, then for any $\dot\omega \in C^{\infty}(M)$ with $\int_M \dot\omega dv_g = 0$, there exists $\psi \in E_k(g)$ such that
    \begin{align*}
        \int_M \dot\omega |\psi|^2_g dv_g = 0.
    \end{align*}
    Let $g(t) = e^{\dot\omega t} g$. Since $g$ is conformally critical, without loss of generality we have
    \begin{align*}
        \frac{d\bar\lambda_k}{dt}\Big|_{t = 0^-} \geq 0 \quad \text{and} \quad \frac{d\bar\lambda_k}{dt}\Big|_{t = 0^+} \leq 0.
    \end{align*}
    Since $\int_M \dot\omega dv_g = 0$, this implies the existence of $\phi_1,\phi_2 \in E_k(g)$ such that 
    \begin{align*}
        \int_M \dot\omega |\phi_1|^2_g dv_g \geq 0 \quad \text{and} \quad \int_M \dot\omega |\phi_2|^2_g dv_g \leq 0,
    \end{align*}
       and taking a linear combination of these two spinors yields the desired $\psi$. 

    Finally, we show that one can take a collection $\psi_1, \dots, \psi_m$ of $\lambda_k(g)$-eigenspinors such that $\sum |\psi_j|^2 = 1$. Let 
    \begin{align*}
        \mathcal{W} = \mathrm{conv} \{ |\psi|^2 , \psi \in E_k(g)\} \subset C^\infty(M)
    \end{align*}
    be the convex hull of the $|\psi|^2$,  and suppose on the contrary that $1 \notin \mathcal{W}$. 
    Since $\mathcal{W}$ is a finite dimensional convex cone, we can apply the Hahn--Banach  theorem to separate 1 from $\mathcal{W}$:  there exists $\xi \in C^\infty(M)$ such that
    \begin{align*}
        &\int_M \xi dv_g > 0, \\
        &\int_M \xi |\psi|^2 dv_g \leq 0 \quad \forall \psi \in E_k(g).
    \end{align*}
    Let $\xi_0 = \xi - \frac{1}{\vol(M,g)} \int_M \xi dv_g$, so that $\int_M \xi_0 = 0$. By the previous result, there exists $\psi \in E_k(g)$ such that
    \begin{align*}
        0 &= \int_M \xi_0 |\psi|^2 dv_g \\
          &= \int_M \xi |\psi|^2 dv_g - \frac{1}{\vol(M,g)} \int_M \xi dv_g \int_M |\psi|^2 dv_g < 0.
    \end{align*}
    a contradiction so $1 \in \mathcal{W}$. 

   We now prove Proposition\ref{prop:conf-crit} in the other  direction. Let $\psi_1, \dots, \psi_m \in E_k(g)$ be such that $1 = \sum |\psi_j|^2$,  and assume without loss of generality that $\lambda_k(g) < \lambda_{k+1}(g)$. Suppose, on the contrary, that $g$ is not $\bar\lambda_k$-conformally critical. Then there exists an analytic family of smooth metrics $g(t)$ with constant volume and $\frac{dg}{dt} = \dot\omega g$, such that the left and right derivatives of $\bar\lambda_k$ at $g(0) = g$ are either both negative or both positive. Without loss of generality, assume they are both negative. 
    There exists a basis $\phi_1, \dots, \phi_l \in E_k(g)$, such that the derivative of any eigenvalue branch $\lambda^{(j)}$ at $g$ is given by $-\int_M \dot\omega |\phi_j|^2 dv_g$ for some $\phi_j$. Since $\lambda_k(g) < \lambda_{k+1}(g)$, the right derivative of $\lambda_k(g(t))$ at $g$ must be the biggest right derivative of all the branches at $g$. So for all $\phi \in E_k(g)$, $-\int_M \dot\omega |\phi|^2 dv_g$ must be negative. Then
    \begin{align*}
        0 = \int_M \dot\omega dv_g = \sum_{j = 1}^m \int_M \dot\omega |\psi_j|^2 dv_g > 0,
    \end{align*}
    and we get a  contradiction.
\end{proof}

\subsection{Harmonic maps to $\CP^n$}

\label{sec:harm}
In this section we review the necessary facts about the geometry of harmonic maps. We are mainly concerned with maps from surfaces to $\CP^n$, so this falls into the setting of harmonic maps between K\"ahler manifolds. We refer to~\cite{EW2} for a more comprehensive exposition.

Given a map  $\Psi\colon (M,g) \to (N, \tilde{g})$, its energy $E(\Psi)$ is given by
\begin{align*}
    E(\Psi) = \frac{1}{2}\int_M |d\Psi|^2 dv_g, 
\end{align*}
where $|\cdot|$ is the Hilbert--Schmidt norm computed with respect to metrics $g,\tilde{g}$. The critical points of the energy functional are called {\em harmonic maps}. 

The complexification $d_{\C}\Psi\colon T_\C M\to T_\C N$ induces the maps $\partial \Psi\colon T^{(1,0)}M\to T^{(1,0)}N$ and $\bar\partial  \Psi \colon T^{(0,1)}M\to T^{(1,0)}N$,  obtained by composing the restriction of $d_{\C}\Psi$ on a corresponding subspace of $T_\C M$ with the projection onto $T^{(1,0)} N$. If manifolds $(M,g)$ and $(N,\tilde{g})$ are Hermitian, then one can readily see that $|d\Psi|^2 = 2(|\partial \Psi|^2 + |\bar\partial \Psi|^2)$, and hence
\begin{align*}
    E(\Psi) = \underbrace{\int_M |\del \Psi|^2 dv_g}_{E^{(1,0)}(\Psi)} + \underbrace{\int_M |\bar\del \Psi|^2 dv_g}_{E^{(0,1)}(\Psi)}.
\end{align*}

In terms of the corresponding K\"ahler forms $\omega_M$, $\omega_N$,  one has that 
\begin{equation}
\label{eq:diff_Khlr}
|\del \Psi|^2 - |\bar\del \Psi|^2 = \la\omega_M,\Psi^*\omega_N\ra.
\end{equation}
Therefore, if $M,N$ are K\"ahler, then the difference $E^{(1,0)}(\Psi) - E^{(0,1)}(\Psi)$ depends only on the homotopy class of the map $\Psi$. As a result, $\Psi$ is harmonic if and only if $\Psi$ is a critical point $E^{(1,0)}$, which is in turn  if and only if $\Psi$ is a critical point of $E^{(0,1)}$.

We now consider the case $N = \CP^n$. The holomorphic tangent bundle $T^{(1,0)}\CP^{n}$ of the complex projective space can be identified with the space $\Hom_\C(L,L^\perp)$ of $\C$-linear maps from the tautological bundle $L$ over $\CP^{n}$ to its orthogonal complement $L^\perp$. Explicitly, let $h\colon T^{(1,0)}\CP^{n} \to \Hom_\C(L, L^\perp)$ be this identification, then for a vector $Z \in T^{(1,0)}_p\CP^{n}$,
\begin{align*}
    h(Z)(l) := \begin{cases}
        0 & \text{if } l = 0 \\
        \pi_{L_p^\perp} \frac{\del f}{\del z}(0) & \text{otherwise},
    \end{cases}
\end{align*}
where $L_p$ is a point $p$ viewed as a line in $\C^{n+1}$, $\pi_{L_p^\perp}$ is the orthogonal projection onto $L_p^{\perp}$, $\phi\colon U\to \CP^n$ be a map from a neighbourhood of $0$ in $\C$ such that $\phi(0) = p$, $\frac{\del\phi}{\del z}(0) = Z$,  and $f\colon U\to \C^{n+1}$ is such that $f(0) = l$ and $f(z) \in \phi(z)$. 

The space $\Hom_\C(L,L^\perp)$ is endowed with a Hermitian metric and a holomorphic connection $\nabla$ induced from a trivial $\C^{n+1}$-bundle over $\CP^{n}$. If we consider $\CP^n$ with its usual complex structure and Fubini-Study metric $g_{FS}$ of constant holomorphic curvature $4$, then $h$ preserves both the metric and a complex structure. For our purposes it is convenient to identify $\CP^1$ with the unit sphere $\mathbb{S}^2$, thus, instead we consider the metric $g_{\CP^n} = 4g_{FS}$, so that this identification is an isometry.

Let $(M,g)$ be a surface %, we endow it with the associated 
endowed with a complex structure. Any map $\Psi\colon M \to \CP^{n}$ is given locally in homogeneous coordinates as $\Psi = [F]$ for some nonvanishing function $F\colon M \to \C^{n+1}$. Then for $Z \in T^{(1,0)}M$ one has
\begin{align*}
    h(\del\Psi(Z))(F) = \pi_{L^\perp} d_\C F(Z),\qquad h(\bar\del\Psi(\bar Z))(F) = \pi_{L^\perp} d_\C F(\bar Z).
\end{align*}
 From now on, the identification $h$ is kept implicit. 

It is easy to see that $\Psi$ is critical for $E^{(0,1)}$ if and only if one has 
\begin{equation}
\label{eq:def_harm}
\nabla_{\del/\del z} \bar\del\Psi = 0,
\end{equation}
i.e. if and only if $\bar\del\Psi$ is an anti-holomorphic element of $\Hom_\C(\Psi^*L,\Psi^*L^\perp)$. If $\Psi$ is not holomorphic, i.e. if $\bar \del \Psi\not\equiv 0$, then outside of finitely many points the image of $\bar\del\Psi$ forms a line in $\Psi^*L^\perp$. One can use anti-holomorphicity of $\bar\del\Psi$ to extend this line bundle across zeroes of 
$\bar\del\Psi$.  The resulting line bundle is called $L_{-1}$, while $\Psi^*L$ is referred to as $L_0$.  Let us endow $L_{-1}$ with the holomorphic structure induced from the Hermitian metric on $\C^{n+1}$. Then the map $\bar\del\Psi\colon L_0\to L_{-1}$ becomes anti-holomorphic. To every such map $\Psi$ one can associate a whole sequence of complex line bundles $L_i$ called {\em harmonic sequence}, see~\cite{Wolfson}. In the present paper we only need $L_{-1}$.

With our convention that $g_{\CP^n} = 4g_{FS}$ we have
\begin{equation}
\label{eq:FS}
|\del\Psi|^2 = 4\frac{|\pi_{L^\perp}\partial_zF|^2}{|\del_z|^2|F|^2},\qquad  |\bar\del\Psi|^2 = 4\frac{|\pi_{L^\perp}\partial_{\bar z}F|^2}{|\del_{\bar z}|^2|F|^2},
\end{equation}
where we write $\del_z$ and $\del_{\bar z}$ for $\frac{\del}{\del z}$ and $\frac{\del}{\del \bar z}$,  respectively.
Finally, we note that with our convention,  integrating~\eqref{eq:diff_Khlr} gives (see \cite[p. 247]{EW2})
\begin{equation}
\label{eq:Edeg}
E^{(1,0)}(\Psi) - E^{(0,1)}(\Psi) = 4\pi\deg(\Psi).
\end{equation}
%======
%
%
%
%talk about definiton of harmonic maps, harmonic maps to Kahler manifolds $E^{(1,0)}$ and $E^{(0,1)}$, formula for degree.
%
%======
%
%We identify the tangent space $T\CP^{2m-1}$ of the complex projective space with the space $\Hom(L,L^\perp)$ of linear maps from the tautological bundle $L$ over $\CP^{2m-1}$ to its orthogonal complement $L^\perp$. Explicitly, let $h : T\CP^{2m-1} \to \Hom(L, L^\perp)$ be this identification, then for a vector $X \in T_x\CP^{2m-1}$,
%\begin{align*}
%    h(X)(l) := \begin{cases}
%        0 & \text{if } l = 0 \\
%        \pi_{L^\perp} \frac{df}{dt}(0) & \text{otherwise},
%    \end{cases}
%\end{align*}
%where $\pi_{L^\perp}$ is the orthogonal projection on $L^{\perp}$ and $f$ is a non-zero section of $\phi \subset (-\epsilon, \epsilon) \times \mathbb{C}^{2m}$ with $f(0) = l$ and $\phi : (-\epsilon, \epsilon) \to \CP^{2m-1}$ is an application such that $\phi(0) = L_x$ and $\frac{d\phi}{dt}(0) = X$.
%
%With this identification, if a map $\Psi : M \to \CP^{2m-1}$ is given locally $\Psi = [F]$ for some non-zero function $F : M \to \CC^{2m}$, then for $X \in TM$,
%\begin{align*}
%    h(d\Psi(X))(F) = \pi_{L^\perp} dF(X).
%\end{align*}
%From now on, the identification $h$ is kept implicit.

\subsection{Geometric criticality condition in the conformal class}

\label{sec:geomcrit}

\begin{lemma}
\label{lem:extend}
   The map  $\Psi$ defined by \eqref{eq:mapcp} extends continuously to the whole manifold $M$.
\end{lemma}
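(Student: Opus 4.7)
The plan is to work locally around a point $p\in Z$ and use the structure of the Dirac equation to extract pure leading-order behavior for each homogeneous coordinate, so that after a common rescaling we obtain a nonzero limit in $\C^{2m}$ and hence a continuous extension of $\Psi$.

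First I would choose a holomorphic coordinate $z$ centered at $p$ and a local trivializing section $s_0$ of $S$ with $s_0\otimes s_0=dz$, and write $\psi_{j+}=f_j s_0$, $\psi_{j-}=g_j\bar s_0$. In this frame the map takes the form $\Psi(z)=[f_1:\bar g_1:\dots:f_m:\bar g_m]$, and the Dirac equation $\D_g\psi_j=\lambda\psi_j$ translates into the first-order elliptic system
\begin{align*}
\partial_{\bar z}f_j=-\tfrac{\lambda}{2}e^{2\omega}g_j,\qquad \partial_z g_j=\tfrac{\lambda}{2}e^{2\omega}f_j.
\end{align*}
For $\lambda=0$ the argument is immediate ($f_j$ is holomorphic and $g_j$ antiholomorphic), so I would assume $\lambda\neq0$. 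Then, by Aronszajn-type unique continuation for the derived second-order elliptic equation (or equivalently via the Bers--Vekua similarity principle applied to the vector $(f_j,\bar g_j)^T$, which satisfies a system of the form $\partial_{\bar z}u=M\bar u$), each nontrivial $f_j$ and $g_j$ has a finite order of vanishing at $p$; I would set $n_j=\operatorname{ord}_p f_j$, $m_j=\operatorname{ord}_p g_j$, and $N_j=\min(n_j,m_j)$.

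The heart of the argument is a leading-order analysis. Comparing the lowest-order Taylor coefficients on both sides of the first equation and using $\operatorname{ord}(\partial_{\bar z}f_j)=\operatorname{ord}(g_j)=m_j\geq n_j-1$ forces $|n_j-m_j|\leq 1$. Moreover, when $m_j\geq n_j$ no $z^p\bar z^q$ term with $q\geq 1$ and $p+q=n_j$ can appear in $f_j$, so the order-$n_j$ leading term of $f_j$ is purely $A_jz^{n_j}$. The symmetric analysis of the second equation shows that when $n_j\geq m_j$ the order-$m_j$ leading term of $g_j$ is purely $B_j\bar z^{m_j}$, whence $\bar g_j=\bar B_j z^{m_j}+O(|z|^{m_j+1})$. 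Thus, for each $j$, whichever of $f_j,g_j$ realises the minimum $N_j$ is ``pure'' in the sense above.

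Finally, let $N=\min_j N_j$. Dividing every homogeneous coordinate of $\Psi$ by $z^N$, the entries with vanishing order strictly greater than $N$ tend to $0$, while those with order exactly $N$ tend to the nonzero constants $A_j$ or $\bar B_j$. By the choice of $N$ the limit vector in $\C^{2m}$ is nonzero, so the $O(|z|^{N+1})$ uniform control on Taylor remainders together with projectivization yield a continuous extension of $\Psi$ to $p$. The main obstacle is the Taylor-expansion bookkeeping in the leading-order step: one has to rigorously trace the interplay of the two coupled equations in each of the three cases $m_j=n_j-1$, $m_j=n_j$, $m_j=n_j+1$ to conclude purity, and a secondary technical point is invoking unique continuation cleanly for the coupled first-order system rather than for a single scalar equation.
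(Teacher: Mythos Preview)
Your proposal is correct and follows essentially the same approach as the paper: expand each component in a local holomorphic trivialization, use the Dirac equation to see that the component realizing the minimal vanishing order has a pure $z^{k}$ (or $\bar z^{k}$) leading term, and then divide all homogeneous coordinates by $z^{N}$ with $N$ the global minimum to obtain a nonzero projective limit. The paper's argument is slightly leaner---it neither invokes unique continuation explicitly nor records the bound $|n_j-m_j|\le 1$ (which is true but unnecessary), and the three-case analysis you flag as the main obstacle is not actually needed, since any component of order strictly greater than $N$ automatically satisfies $|h/z^{N}|=O(|z|)\to 0$ regardless of the form of its leading homogeneous part.
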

\begin{proof}[Proof of Lemma \ref{lem:extend}]
    The proof proceeds in two steps: first we investigate the behavior of a single spinor $\psi_j$ near its zeroes, and then consider the projectivisation $[\psi_{1+} : \bar{\psi}_{1-} : \dots ]$.

    Fix an eigenspinor $\psi$,  and let $p \in M$ be a zero of $\psi$, $\psi(p) = 0$. Let $z$ be a local holomorphic coordinate centered at $p$, and $s_0$ be a local section of $S$ with $s_0 \otimes s_0 = dz$.  We write 
    \begin{equation}
    \label{eq:psi}
    \psi = (f_+ s_0, \bar{f}_- \bar{s}_0).
    \end{equation}
    Expanding around $p$, we have
    \begin{align*}
        f_{\pm}(z,\bz) = P_{\pm}(z,\bz) + R_{\pm}(z,\bz),
    \end{align*}
    where $P_\pm$ is a homogeneous polynomial of order $k_\pm$ and $R_\pm$ corresponds to higher order terms. Without loss of generality, we assume $k_- \leq k_+$. 

    From the fact that $\psi$ is an eigenspinor, we have $\del_\bz f_- = \frac{\lambda}{\sqrt{2} |s_0|^2} \bar{f}_+$, i.e.
    \begin{align*}
        \del_\bz P_- + \del_\bz R_- = \frac{\lambda}{2|s_0|^2} (\bar{P}_+ + \bar{R}_+).
    \end{align*}
    The term $\del_\bz P_-$ is of order strictly less than $k_-$ or identically zero, while the terms on the right-hand side are of order $k_+ \geq k_-$. Hence we conclude that $\del_\bz P_- = 0$, so that $P_-(z,\bz) = a z^{k_-}$ for some $a \neq 0$. 

    Looking at $f_+$ and using again that $\psi$ is an eigenspinor, we obtain that 
    \begin{align*}
        P_+(z,\bz) = \sum_{\substack{k' + k'' = k_+  \\ k' \geq k_-}} b_{k',k''} z^{k'}\bz^{k''},
    \end{align*}
    with at least one of the coefficients $b_{k',k''}$ being non-zero. 

    We now consider a family of eigenspinors $\psi_1, \dots, \psi_m$. We again write $\psi_j = (f_{j+} s_0, \bar{f}_{j-} \bar{s_0})$,  and similarly we have the expansion in terms of $P_{j\pm}$ and $R_{j\pm}$. 
    Let $K_j = \min \{k_{j+}, k_{j-}\}$ and $K = \min_{j} K_j$. Then by the previous reasoning, the limits $\lim_{q \to p} \frac{f_{j\pm}}{z(p)^{K}}$ are well defined for all $j\pm$,  and at least one of them is equal to $1$ (to simplify notation, we suppose it is the case for $f_{1-}$). Hence, 
    \begin{align*}
        [\psi_{1+} : \bar{\psi}_{1-} : \dots : \psi_{m+} : \bar{\psi}_{m-}] &= [f_{1+} : f_{1-} : \dots : f_{m+} : f_{m-}] \\
                                                                            &= [f_{1+}/z^K : f_{1-}/z^K : \dots : f_{m+}/z^K : f_{m-}/z^K] \\
                                                                            &= [f_{1+}/z^K : 1 : \dots : f_{m+}/z^K : f_{m-}/z^K] 
    \end{align*}
    is a well defined projective point if $z = 0$, and we can set $\Psi(p)$ equal to it.
\end{proof}

\label{sec:proofharmo}
\begin{proof}[Proof of Theorem \ref{prop:harmo}]
    Let $z$ be some local holomorphic coordinate and $s_0$ be a local section of $S$ such that $s_0 \otimes s_0 = dz$. Write $\psi_j = (f_{j+} s_0, \bar{f}_{j-} \bar{s}_0)$ and $F = (f_{1+}, f_{1-}, f_{2+}, f_{2-}, \dots)$. Locally, $\Psi$ is the projectivisation $[F]$ of $F$. By~\eqref{eq:def_harm}, in order to prove that $\Psi$ is harmonic it is sufficient to show that $(\nabla d\Psi)(\del_z, \del_\bz) = 0$ for any choice of local coordinates.

    As before, let $L \subset \CP^{2m-1} \times \mathbb{C}^{2m}$ be the tautological bundle and $L^\perp$ its orthogonal complement. Then
    \begin{align*}
        d\Psi(\del_\bz)(F) = \pi_{L^\perp} \del_\bz F,
    \end{align*}
    and
    \begin{align}
    \label{eq:defharm1}
        (\nabla d \Psi(\del_\bz, \del_z))(F) = \pi_{L^\perp} \del_z\left( \pi_{L^\perp} \del_\bz F\right) - d\Psi(\del_\bz)(\pi_L \del_z F),
    \end{align}
    where we use $\pi_E$ to denote the orthogonal projection on the subspace $E$.

    From the eigenvalue equation $\D \psi_j = \lambda \psi_j$, we obtain
    \begin{align} \label{eq:eigenvalue}
        2 |s_0|^2 \del_\bz f_{j+} = - \lambda \bar{f}_{j-}, \\
        2 |s_0|^2 \del_z \bar{f}_{j-} = \lambda f_{j+}.
    \end{align}
    As a result,
    \begin{equation}
    \label{eq:eigsp_qharm}
    \del_\bz (f_{j+}, f_{j-}) = \frac{\lambda}{2|s_0|^2} (- \bar{f}_{j-}, \bar{f}_{j+}).
    \end{equation}
    In particular, $\del_\bz F$ is orthogonal to $F$, hence
    \begin{align*}
        d\Psi(\del_\bz)(F) = \pi_{L^\perp} \del_\bz F = \del_\bz F.
    \end{align*}
    Then
    \begin{align*}
        (\nabla d\Psi(\del_\bz, \del_z))(F) = \pi_{L^\perp} \del_z \del_\bz F - d \Psi(\del_\bz)(\pi_{L} \del_z F).
    \end{align*}
    The first term on the right-hand side of~\eqref{eq:defharm1} gives
    \begin{align*}
        \pi_{L^\perp} \left(\del_z \del_\bz F\right) &= \frac{\lambda}{2} \del_z \left(\frac{1}{|s_0|^2}\right) ( -\bar{f}_{1-}, \bar{f}_{1+}, \dots ) + \frac{\lambda}{2 |s_0|^2} \pi_{L^\perp} \del_z (-\bar{f}_{1-}, \bar{f}_{1+}, \dots) \\
                                        &= \frac{\lambda}{2} \del_z \left(\frac{1}{|s_0|^2} \right) (-\bar{f}_{1-}, \bar{f}_{1+}, \dots),
    \end{align*}
    where the eigenvalue equation is used to conclude that $\del_z (-\bar{f}_{1-}, \bar{f}_{1+}, \dots) = - \frac{\lambda}{2|s_0|^2}F$ and is annihilated by $\pi_{L^\perp}$. 

    The second term on the right-hand side of~\eqref{eq:defharm1} gives
    \begin{align*}
        \pi_{L^\perp} \del_\bz \left(\frac{\langle \del_z F, F \rangle}{|F|^2} F \right)
        &= \frac{\langle \del_z F, F \rangle} {|F|^2} \del_\bz F \\
        &= \frac{\lambda}{2|s_0|^2} \frac{\langle \del_z F, F \rangle}{|F|^2} (-\bar{f}_{1-}, \bar{f}_{1+}, \dots).
    \end{align*}

    It then suffices to show that $\del_z \left(\frac{1}{|s_0|^2}\right) = \frac{\langle \del_z F, F \rangle}{|s_0|^2 |F|^2}$. This follows from the condition $|\psi_1|^2 + \dots + |\psi_m|^2 = 1$. Indeed, 
    \begin{align*}
        1 = |\psi_1|^2 + \dots + |\psi_m|^2 = |F|^2 |s_0|^2
    \end{align*}
    so $\del_z |F|^2 = \del_z \left(\frac{1}{|s_0|^2}\right)$. But $\del_z |F|^2 = \langle \del_z F, F \rangle + \langle F, \del_\bz F \rangle = \langle \del_z F, F \rangle$.
\end{proof}

We can also observe that the metric $g$ and the value of $\bar\lambda_k(M,g,S)$ can be computed in terms of the map $\Psi$. Indeed, by~\eqref{eq:eigsp_qharm} one has
$$
|\bar\del\Psi|^2_g = 4\frac{|\del_{\bar z}F|^2}{|F^2||\del_z|_g^2} = \lambda_k^2(M,g,S),
$$
so that $g$ is proportional to $g_\Psi = |\bar\del\Psi|_g^2g$. In particular, $\lambda_k(M,g_\Psi, S)=1$.
Furthermore, 
$$
\bar\lambda_k(M,g,S)^2 = \int_M|\bar\del\Psi|^2_g\,dv_g = E_g^{(0,1)}(\Psi).
$$

\subsection{Maps by eigenspinors}
\label{sec:eigmaps}
Consider the inverse problem: when can a harmonic map to $\CP^{2m-1}$ be written in terms of eigenspinors? 
We first investigate what conditions are satisfied by such a map.
Let $\Psi = [\psi_{1+} : \bar\psi_{1-} : \dots : \psi_{m+} : \bar\psi_{m-}]$, where $\psi_j = (\psi_{j+}, \psi_{j-})$ are eigenspinors $\D \psi_j = \lambda \psi_j$ satisfying $|\psi_1|^2 + \dots + |\psi_m|^2 = 1$.
Consider $I\colon \CC^{2m} \to \CC^{2m}$ given by
\begin{align*}
    I(z_1, \dots, z_m) = (- \bz_2, \bz_1, -\bz_4, \bz_3, \dots, -\bz_{2m}, \bz_{2m-1}).
\end{align*}
Then from~\eqref{eq:eigsp_qharm} we have $L_{-1} = I(L_0)$, where we recall that $L_{-1}$ is the image of $\bar\del\Psi$, see Section~\ref{sec:harm}. 
\begin{remark}
\label{quaternionic} The map $I$ can be viewed as the quaternionic structure on $\CC^{2m}$. Indeed, $\CC^{2m}$ is isomorphic to the quaternionic space $\mathbb{H}^m$ with the identification $(z_1, z_2) \to z_1+ {\bf j} z_2$.
Then the map $I$ is the multiplication by the element $\bf{j}$ on the right. 
We also note that the condition $L_{-1}=I(L_0)$  appears naturally in the context of harmonic maps to quaternionic projective spaces, see \cite[p. 284]{Ud97} and Remark \ref{rem:FLPP}.
\end{remark}

Furthermore, writing $\psi_j = (f_{j+} s_0, \bar{f}_{j-} \bar{s}_0)$ and $F = (f_{1+}, f_{1-}, \dots, f_{m-})$,  we have
\begin{align*}
    \left[\bar\del \Psi(\del_\bz)\right](F) = \frac{\lambda}{2}|F|^2 I(F),
\end{align*}
and hence all the zeroes of $\bar\del \Psi$ correspond to the zeroes of $F$ (i.e., the common zeroes of $\psi_1, \dots, \psi_m$), and are of even order. 

This leads to the following definition.
\begin{definition} 
\label{def:spinharm} 
A harmonic map $\Psi\colon (M,\mC) \to \CP^{2m-1}$ is called {\em quaternionic harmonic} if
    \begin{enumerate}
        \item $L_{-1} = I(L_0)$,
        \item all the zeroes of $\bar\del \Psi$ are of even order.
    \end{enumerate}
\end{definition}
The geometric meaning behind this definition and examples are discussed in Section~\ref{sec:qharm} below. We have seen that all harmonic maps by eigenspinors arising from the critical points of Dirac eigenvalues are quaternionic harmonic. The following proposition establishes the converse. 

\begin{proposition}
\label{prop:H_harm}
Suppose that $\Psi\colon M \to \CP^{2m-1}$ is a quaternionic harmonic map and let $D = (\bar\del\Psi)$ be the zero divisor of $\bar\del\Psi$. Then $\Psi$ induces a natural spin structure $S_\Psi = L_0^*\otimes\left[\frac{1}{2}D\right]$ on $M$. Furthermore, for a metric $g_\Psi = |\bar\del\Psi|_g^2g$ ($g\in \mC$ is arbitrary),  one can choose  a collection of 
$\mD_{g_\Psi}$--eigenspinors  $\psi_1,\ldots, \psi_m$ such that 
$$
 \Psi = [\psi_{1+} : \bar{\psi}_{1-} : \dots : \psi_{m+} : \bar{\psi}_{m-}]
 $$
 and
 \begin{equation}
 \label{eq:sumpsi}
 \sum_{j=1}^m {|\psi_j|_{g_\Psi}^2} = 1.
 \end{equation}
\end{proposition}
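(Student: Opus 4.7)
\emph{Strategy.} The proof splits into two parts: first, verifying that $S_\Psi$ is a spin structure by exhibiting a holomorphic isomorphism $S_\Psi^{\otimes 2}\cong K$; and second, extracting Dirac eigenspinors from the homogeneous coordinates of $\Psi$ and checking the normalization \eqref{eq:sumpsi}. Throughout, I work in a local holomorphic coordinate $z$ with a non-vanishing local section $F$ of $L_0\subset M\times\C^{2m}$, so that $\Psi=[F]$.

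\emph{Part 1: the spin structure.} Two inputs are available: the anti-holomorphic section $\bar\partial\Psi\in\Gamma(\bar K\otimes L_0^*\otimes L_{-1})$ whose zero divisor is $D$ (with positive integer multiplicities, all even by the quaternionic harmonic assumption), and the antilinear bundle isomorphism $I\colon L_0\to L_{-1}$. Because $I$ is the restriction of a constant antilinear map on $\C^{2m}$, it induces a \emph{holomorphic} isomorphism $L_0\cong\bar L_{-1}$. The goal is to produce a nowhere-vanishing holomorphic section of $K\otimes L_0^{\otimes 2}\otimes[-D]$. The natural recipe is to pair the $L_{-1}$-valued quantity $\bar\partial\Psi(\partial_{\bar z})F$ against $I(F)\in L_{-1}$ via the ambient Hermitian metric on $\C^{2m}$, and then take the complex conjugate; this conjugation converts the anti-holomorphic $\bar\partial\Psi$ into a holomorphic expression. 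After bookkeeping of the transformation rules under $F\mapsto\lambda F$ and coordinate changes, one obtains a globally defined holomorphic section of $K\otimes L_0^{\otimes 2}$ whose zero divisor equals $D$. Dividing by the canonical section of $[D]$ then yields the desired holomorphic isomorphism $S_\Psi^{\otimes 2}\cong K$; the evenness of the multiplicities of $D$ ensures that $[D/2]$ and hence $S_\Psi=L_0^*\otimes[D/2]$ are well defined.

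\emph{Part 2: the eigenspinors.} Fix a local holomorphic frame $s_0$ of $S_\Psi$ with $s_0^{\otimes 2}=dz$. Using the iso $S_\Psi\cong L_0^*\otimes[D/2]$ and a canonical holomorphic section of $[D/2]$ vanishing on $D/2$, define functions $f_{j+}$ and $\bar f_{j-}$ by requiring that $f_{j+}s_0$ and $\bar f_{j-}\bar s_0$ correspond to the components $F_{2j-1}$ and $F_{2j}$ of $F$. Setting $\psi_j=(f_{j+}s_0,\bar f_{j-}\bar s_0)$ yields a globally smooth section of $S_\Psi\oplus\bar S_\Psi$; the even-order assumption on $D$ is what lets one absorb the half-divisor smoothly. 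That these $\psi_j$ are $\mD_{g_\Psi}$-eigenspinors with eigenvalue $1$ follows from the local form of the quaternionic condition, $\pi_{L_0^\perp}\partial_{\bar z}F\propto I(F)$, which componentwise is exactly the Dirac equation \eqref{eq:eigsp_qharm} rescaled to $g_\Psi$. By construction, $\Psi=[\psi_{1+}\colon\bar\psi_{1-}\colon\cdots\colon\psi_{m+}\colon\bar\psi_{m-}]$, and \eqref{eq:sumpsi} follows from the identity $|F|^2|s_0|^2=\sum_j|\psi_j|^2$ together with the formula $|\bar\partial\Psi|_g^2=\lambda^2$ recorded at the end of Section~\ref{sec:geomcrit}, which gives the correct scaling for $g_\Psi=|\bar\partial\Psi|_g^2 g$.

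\emph{Main obstacle.} The crux is upgrading the smooth isomorphism $S_\Psi^{\otimes 2}\cong K$ to a holomorphic one. A smooth version follows quickly from a degree count using $L_{-1}\cong\bar L_0$ (antilinear $I$) and $\bar L_0\cong L_0^*$ (Hermitian metric); but the latter identification is not holomorphic. The construction above bypasses this by exploiting that $I$, being constant on $\C^{2m}$, gives a genuinely holomorphic identification $L_0\cong\bar L_{-1}$. Verifying that the proposed holomorphic section is indeed well defined and has the asserted zero divisor — i.e., checking the transformation properties under changes of frame and coordinate — is the most delicate step and the heart of the proof.
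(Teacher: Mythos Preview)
Your proposal is correct and mirrors the paper's proof closely: the paper constructs the holomorphic map $A\colon L_0^*\otimes L_0^*\to K$ via exactly the pairing $\langle(\bar\partial\Psi)(\partial_{\bar z})(\,\cdot\,),\,I(\,\cdot\,)\rangle$ you describe (their equations \eqref{def:A}--\eqref{def:B}), then divides out the canonical section of $[D]$ to obtain the spin structure and extracts the eigenspinors from a local holomorphic frame $s_0$ with $\tilde A(s_0\otimes s_0)=dz$, verifying $\partial_{\bar z}F=|F|^2 I(F)$ directly from \eqref{def:B}. Your identification of the frame/coordinate transformation bookkeeping as the delicate point is exactly right---the paper handles it by passing through the chain $T^{(0,1)}M\to L_0^*\otimes L_0^*$ (via the holomorphic $\iota\colon L_0^*\to L_{-1}$), then dualising, conjugating, and finally using the metric on $L_0$, which is essentially the same sequence of moves you outline.
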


\begin{proof}

Since $\Psi$ is harmonic, $\bar\del \Psi\colon T^{(0,1)}M \to \Hom(L_0, L_{-1})$ is an anti-holomorphic morphism of line bundles.

We have the holomorphic isomorphism $\iota\colon L_0^* \to I(L_0)$, given for any $\alpha \in L_0^*$ by 
    \begin{align*}
        \alpha(l) = \langle \iota(\alpha), I(l)\rangle,
    \end{align*}
    and by assumption, $L_{-1} = I(L_0)$. Thus, $\Hom(L_0, L_{-1}) \cong L_0^* \otimes I(L_0) \cong L_0^* \otimes L_0^*$. Furthermore, $\bar\del \Psi\colon T^{(0,1)}M \to \Hom(L_0, L_{-1})$ is an anti-holomorphic map. Composing the two, one has an anti-holomorphic map
    $$
    T^{(0,1)}M \to L_0^*\otimes L_0^*
    $$
    whose dual is
    $$
    L_0\otimes L_0 \to \bar K.
    $$
Taking the complex conjugate of both sides, one has a holomorphic map 
    $$
    \overline{L_0\otimes L_0} \to K
    $$
and, finally, using the metric on $L_0$, we arrive at a holomorphic map
    $$
    A\colon L_0^*\otimes L_0^* \to K
    $$
    given by 
    \begin{equation}
    \label{def:A}
    A(s_1,s_2) = dz\quad\Longleftrightarrow\quad \langle (\bar\del_0 \Psi)(\del_{\bar z})(\overline{s_1^\flat}), I(\overline{s_2^\flat}) \rangle = 1,
    \end{equation}
    where $s_i^\flat\in \bar L_0$ is defined via $s_i(f) = \langle f, \overline{s_i^\flat}\rangle$, $f\in L_0$. In particular, if $f_i\in L_0$ is dual to $s_i$, i.e. $s_i(f_i) = 1$, then $f_i = \dfrac{\overline{s_i^\flat}}{|s_i^\flat|^2} = |f_i|^2f_i = \frac{1}{|s_i|^2}f_i$ and 
    \begin{equation}
    \label{def:B}
    \langle(\bar\del \Psi)(\del_{\bar z})(f_1),I(f_2) \rangle = |f_1|^2|f_2|^2.
    \end{equation}
    Similarly, if $s_0$ is a local meromorphic section satisfying $A(s_0,s_0) = dz$, then $\tilde s_0$, the dual of $\overline s_0^\flat$, is a local anti-meromorphic of $L_0^*$ section satisfying $(\bar\del \Psi)(\del_{\bar z}) = \tilde s_0\otimes \tilde s_0$. Since the poles of $s_0$ correspond to zeroes of $\tilde s_0$, we conclude that the zeroes of $A$ have the same order as the zeroes of $\bar\del\Psi$.
 
  Let $D$ be the zero divisor of $A$ as a section of $L_0\otimes L_0 \otimes K$, i.e.
    $$
    D := (A) = \sum_{p \in M} \mathrm{ord}_p(A) p
    $$
and let $[D]$ be the associated holomorphic line bundle. By construction 
$$
[D]\cong L_0\otimes L_0\otimes K,
$$
hence, combining it with $A$ yields a holomorphic isomorphism
$$
\tilde A\colon \left(L_0^*\otimes \left[\frac{1}{2}D\right]\right)\otimes \left(L_0^*\otimes \left[\frac{1}{2}D
\right]\right)\xrightarrow{\sim} K,
$$ 
where we used that the zeroes of $\bar\del_0\Psi$ (and hence of $A$) are of even order in order to define $\left[\frac{1}{2}D\right]$. In particular, this implies that $S_\Psi :=S=L_0^*\otimes \left[\frac{1}{2}D\right]$ is a spin structure on $M$. 

Let $z$ be a local coordinate on $M$. Define $s_0$ to be a local holomorphic section of $S$ such that $\tilde A(s_0\otimes s_0) = dz$,  and consider $F_0$ to be a (non-vanishing) holomorphic section of $S^* = L_0\otimes \left[-\frac{1}{2}D\right]$ such that $s_0(F_0) = 1$. If $\eta$ is a (global) holomorphic section of $\left[\frac{1}{2}D\right]$ such that $(\eta) = \frac{1}{2}D$, then one can write $s_0 = s \otimes \eta$, $F_0 = F\otimes \eta^{-1}$, where $s, F$ are local sections of $L_0^*$ and $L_0$, respectively,  satisfying $s(F) = 1$. In particular, $F$ is a local holomorphic section of $L_0$ satisfying the following three conditions:
\begin{enumerate}
\item $(F) - \frac{1}{2}D\geq 0$
\item as a section of $L_0$, $F$ is a local $\mathbb{C}^{2m}$-valued function such that $F$ is a local lift of $\Psi$ outside of the support of $D$. In particular, since $s(F) \equiv 1$, we see that $F\otimes s_0 = F\otimes s\otimes \eta$ is a globally defined section of $\mathbb{C}^{2m}\otimes S$, whose projectivisation is equal to $\Psi$ outside of the support of $D$. Thus, the map $\Psi$ is a map by spinors in the sense of~\eqref{eq:mapcp}.
%Finally, support of $D$ is a discrete set, $\Psi$ is harmonic
\item Since $F$ is holomorphic, one has $\del_{\bar z} F\perp F$. Hence, $\left[\bar\del \Psi(\del_{\bar z})\right](F)=\del_{\bar z}F$, and by~\eqref{def:B} one has
$$
\del_{\bar z}F = |F|^2 I(F).
$$
If we define the metric on $M$ (with conical singularities at zeroes of $F$) by $2|s_0|_g^2 = |F|^{-2}$, then~\eqref{eq:eigsp_qharm} implies that $\Psi$ is a map by eigenspinors with eigenvalue $\lambda = 1$ and $2|F\otimes s_0|^2=1$. Moreover,~\eqref{eq:eigsp_qharm} implies that 
$$
|\del\Psi|_g^2 = 4\frac{|\del_{\bar z}F|^2}{|F^2||\del_z|_g^2} = 4|F|^4|s_0|_g^2 = 1,
$$
i.e. $g=g_\Psi$. Changing $F$ to $\tilde F = \sqrt{2}F$ we obtain that $|\tilde F|^2|s_0|^2 = 1$ and $\tilde F\otimes s_0$ defines the same map $\Psi$. Defining $\psi$ as in  \eqref{eq:psi} using $\tilde F$ instead of $F$ yields  \eqref{eq:sumpsi}.
Finally, by Lemma~\ref{lem:extend} any map by eigenspinors can be extended to the set of common zeroes, and this extension has to coincide with $\Psi$ on the support of $D$ by continuity. 
\end{enumerate}

\end{proof}

We summarise the results of this section in the following proposition.

\begin{proposition}\label{prop:spin-harmonic}
	Let $M$ be an oriented surface and $\mC$ be a conformal class of metrics on $M$. Suppose that $g\in \mC$ is $\bar\lambda_k$-conformally critical metric for a spin structure $S$. Then there exists a collection $\psi_j = (\psi_{j+},\psi_{j-})$, $j=1,\ldots, m$ of $\lambda_k$-eigenspinors,  such that the map $\Psi\colon(M,\mC)\to\CP^{2m-1}$ given in homogeneous coordinates by
$$	
\Psi = [\psi_{1+} : \bar\psi_{1-}: \dots : \psi_{m+}: \bar\psi_{m-}]
$$	
is a quaternionic harmonic map. Moreover, $\bar\lambda_k(M,g,S)^2 = E^{(0,1)}_g(\Psi)$ and $g = \alpha g_\Psi = \alpha |\bar\del\Psi|_g^2g$ for some $\alpha>0$.	

Conversely, let $\Psi\colon (M,\mC)\to\CP^{2m-1}$ be a quaternionic harmonic map, and let $S_\Psi$ be the spin structure induced by $\Psi$ in the sense of Proposition~\ref{prop:H_harm}. Then the metric $g_\Psi = |\bar\del\Psi|_g^2g$ is $\lambda_k$-conformally critical metric for a spin structure $S_\Psi$, where $k-1$ is the number of eigenvalues $\lambda_j(M,g_\Psi,S_\Psi)$ satisfying $0<\lambda_j(M,g_\Psi,S_\Psi)<1$. Furthermore, one has $\bar\lambda_k(M,g_\Psi,S_\Psi)^2 = E^{(0,1)}_\mC(\Psi)$.
\end{proposition}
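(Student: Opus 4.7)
The plan is to assemble the statement from results established earlier in the paper: the forward direction combines Proposition~\ref{prop:conf-crit} with Theorem~\ref{prop:harmo}, while the converse relies on Proposition~\ref{prop:H_harm} together with the converse half of Proposition~\ref{prop:conf-crit}.

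\textbf{Forward direction.} Starting from a $\bar\lambda_k$-conformally critical metric $g\in\mC$, Proposition~\ref{prop:conf-crit} produces eigenspinors $\psi_1,\ldots,\psi_m\in E_k(g)$ with $\sum_j|\psi_j|_g^2=1$; since $\lambda_k>0$, Theorem~\ref{prop:harmo} then gives that the associated map $\Psi=[\psi_{1+}:\bar\psi_{1-}:\cdots:\psi_{m+}:\bar\psi_{m-}]$ is harmonic. To upgrade $\Psi$ to a quaternionic harmonic map I would verify the two conditions of Definition~\ref{def:spinharm}: the identity $L_{-1}=I(L_0)$ is immediate from~\eqref{eq:eigsp_qharm}, which shows that any local lift $F$ of $L_0$ is sent by $\bar\del\Psi$ to a nonzero scalar multiple of $I(F)$, and condition~(2) is vacuous because the normalisation $\sum_j|\psi_j|_g^2=1$ forces the local lift $F$ to be nowhere vanishing, so $\bar\del\Psi$ has no zeros at all. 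The relation $\bar\lambda_k(M,g,S)^2=E_g^{(0,1)}(\Psi)$ and the proportionality $g=\alpha g_\Psi$ with $\alpha=\lambda_k^{-2}$ then follow by integrating the pointwise equality $|\bar\del\Psi|_g^2=\lambda_k^2$ derived at the end of Section~\ref{sec:geomcrit}.

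\textbf{Converse direction.} Given a quaternionic harmonic map $\Psi\colon(M,\mC)\to\CP^{2m-1}$, Proposition~\ref{prop:H_harm} delivers the spin structure $S_\Psi$, the metric $g_\Psi=|\bar\del\Psi|_g^2g$, and eigenspinors $\psi_1,\ldots,\psi_m$ of $\mD_{g_\Psi}$ with eigenvalue $1$ satisfying~\eqref{eq:sumpsi}. I would then identify $1$ as $\lambda_k(M,g_\Psi,S_\Psi)$ with $k$ chosen as in the statement, i.e.\ $k-1$ equal to the number of positive eigenvalues strictly below $1$. With this choice $\lambda_{k-1}(M,g_\Psi,S_\Psi)<1=\lambda_k(M,g_\Psi,S_\Psi)$, so the extremality hypothesis $\lambda_k>\lambda_{k-1}$ of Proposition~\ref{prop:conf-crit} is met, and its converse half yields $\bar\lambda_k$-conformal criticality of $g_\Psi$ for $S_\Psi$. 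The energy identity is a one-line pointwise computation: $|\bar\del\Psi|_{g_\Psi}^2=\lambda_k(g_\Psi)^2=1$ gives $E_{g_\Psi}^{(0,1)}(\Psi)=\area(M,g_\Psi)=\bar\lambda_k(M,g_\Psi,S_\Psi)^2$. Finally, the conformal invariance of $E^{(0,1)}$ in dimension two (the factor $e^{-2\omega}$ in $|\bar\del\Psi|^2$ cancels against $e^{2\omega}$ in the area form) legitimises the metric-independent notation $E_\mC^{(0,1)}(\Psi)$.

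The only bookkeeping subtlety is the selection of $k$ in the converse: Proposition~\ref{prop:conf-crit} requires the eigenvalue to be extremal inside its multiplicity block, and the prescription for $k$ in the statement is precisely designed so that $1=\lambda_k$ becomes the smallest eigenvalue of its block automatically. Apart from this accounting, the whole argument is a routine concatenation of results already proved in Sections~\ref{sec:proofcrit} through~\ref{sec:eigmaps}, and I do not expect any genuine obstacle.
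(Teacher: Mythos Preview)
Your proposal is correct and matches the paper's approach: the paper presents Proposition~\ref{prop:spin-harmonic} explicitly as a summary of the preceding results (``We summarise the results of this section in the following proposition'') without a separate proof, and your assembly of Proposition~\ref{prop:conf-crit}, Theorem~\ref{prop:harmo}, the computations at the end of Section~\ref{sec:geomcrit}, the discussion opening Section~\ref{sec:eigmaps}, and Proposition~\ref{prop:H_harm} is exactly the intended route. Your observation that under the normalisation $\sum_j|\psi_j|_g^2=1$ the lift $F$ is nowhere vanishing, so that condition~(2) of Definition~\ref{def:spinharm} is vacuous in the forward direction, is a clean shortcut compared with the paper's more general remark that zeros of $\bar\del\Psi$ are of even order.
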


\subsection{Examples of quaternionic harmonic maps}
\label{sec:qharm}
The condition (2)  in Definition~\ref{def:spinharm} is fairly  geometric, so in this section we mainly focus on the geometric meaning of the condition (1).
The simplest case is $m=1$, for which $I(L_0)=L_0^\perp$ and  the condition $L_{-1} = I(L_0)$ reduces to $\bar\del\Psi
\not\equiv 0$, i.e. $\Psi$ is a non-holomorphic harmonic map. In particular, any non-constant anti-holomorphic map satisfies condition (1) of Definition~\ref{def:spinharm}. 

If $m>1$, it is possible to construct maps satisfying $L_{-1} = I(L_0)$ from holomorphic maps to $\CP^{2m-1}$ using~\cite[Section 4]{BW}. To any linearly full holomorphic map $\Phi\colon M\to\CP^{2m-1}$ one can associate its Fr\'enet frame $\{\Phi=\Phi_0,\Phi_1,\ldots,\Phi_{2m-1}\}$, where $\Phi_j\colon M\to\CP^{2m-1}$ are mutually orthogonal and satisfy $\mathrm{Span}\{\Phi_0,\ldots,\Phi_j\} = \mathrm{Span}\{\Phi,\del_z\Phi,\ldots,\del_z^j\Phi\}$ as lines in $\CC^{2m-1}$. This is a  special example of a harmonic sequence~\cite{Wolfson}, in particular, $\Phi_j$ is harmonic for all $j=0,\ldots, 2m-1$ and $\Phi_{2m-1}$ is anti-holomorphic. In~\cite[Theorem 4.2]{BW} it is shown that if $\Phi_{2m-1} = I(\Phi_0)$, then $\Phi_{2m-1-j} = I(\Phi_j)$ for all $j=0,\ldots, 2m-1$ and, in particular, $\Phi_{m-1} = I(\Phi_m)$. Setting $\Psi = \Phi_m$ yields $L_{-1} = \mathrm{Span}\{\Phi_{m-1}\}$, thus, $\Psi$ is a harmonic map satisfying condition (1) of Definition~\ref{def:spinharm}. For $m=2$, the  relation $\Phi_{2m-1} = I(\Phi_0)$ is satisfied if and only if  $\Phi$ is {\em horizontal} with respect to the projection $\pi\colon\CP^{3}\to\HP^{1}$, i.e.  $\mathrm{Span}\{\Phi,\del_z\Phi\}\perp I(\Phi)$, see~\cite[Theorem 4.5]{BW}. Indeed, differentiating $\la \del_z\Phi, I(\Phi)\ra = 0$ yields $\la \del^2_z\Phi, I(\Phi)\ra = 0$, hence, $I(\Phi)= I(\Phi_0) = \Phi_3$.

\begin{example}
Consider a holomorphic map $\Phi\colon\CP^1\to \CP^3$ given by $\Phi([1:z]) = [1:z^3:-\sqrt{3}z:\sqrt{3}z^2]$. It is easy to check that $\Phi$ is horizontal, $\Phi_1 = [-3\bar z:3z^2:\sqrt{3}(2|z|^2-1):\sqrt{3}z(2-|z|^2)]$, $\Phi_2 = I(\Phi_1)$. Setting $\Psi = \Phi_2$ gives a harmonic map satisfying condition (1) of Definition~\ref{def:spinharm}. One computes
$$
g_\Psi = |\bar\del\Psi|_g^2g = 4\frac{|\la\del_z\Phi_1,I(\Phi_1)\ra|^2}{|\Phi_1|^4}dzd\bar z = \frac{16 dzd\bar z}{(1+|z|^2)^2} = 4g_{\mathbb{S}^2}.
$$
Hence, $\bar\del\Psi$ has no zeroes and $\Psi$ is a quaternionic harmonic map. Furthermore, this computation shows that the round metric on the sphere is critical for a Dirac eigenvalue $\lambda_k$ such that $\bar\lambda_k(\mathbb{S}^2,g,S)^2 = 16\pi$, so that $\lambda_k=2$, i.e. $k=2$. Therefore, we have established that the round metric on the sphere is $\bar\lambda_2$-critical.
\end{example}

\begin{example}
The previous example is a special case of the so-called {\em Veronese sequence}~\cite[Section 5]{BJRW}. Namely, for any $m$ one can consider a holomorphic map $\Phi\colon\mathbb{CP}^1\to \CP^{2m-1}$ given by
$$
\Phi([1:z]) = \left[1:\sqrt{2m-1}z:\ldots:\sqrt{\binom{2m-1}{j}}z^j:\ldots:z^{2m-1}\right].
$$ 
In~\cite[Theorem 5.2]{BJRW} it is computed that 
$$
\Phi_{2m-1}([1:z]) = \left[-\bar z^{2m-1}:\ldots:(-1)^{2m-1-j}\sqrt{\binom{2m-1}{j}}\bar z^{2m-1-j}:\ldots:1\right].
$$
Therefore, $\Phi_{2m-1} = \tilde I(\Phi_0)$, where
$$
\tilde I([z_0:\ldots:z_{2m-1}]) = [-\bar z_{2m-1}:\bar z_{2m-2}:\ldots:(-1)^{2m-1-j}\bar z_{2m-1-j}:\ldots:1].
$$
Consider $A\in \mathrm{PU}(2m)$ given by
\begin{align*}
&A([z_0:\ldots:z_{2m-1}]) = \\
&[z_0:z_{2m-1}:-z_1:z_{2m-2}:\ldots:(-1)^jz_j:z_{2m-1-j}:\ldots: (-1)^{m-1}z_{m-1}:z_m],
\end{align*}
then $A\tilde I = IA$, so that  $A\Phi_{2m-1} = I(A\Phi_0)$. Since $A$ is an isometry, one has $(A\Phi)_j = A\Phi_j$, hence $\Psi = A\Phi_m$ satisfies $L_{-1} = I(L_0)$. Furthermore, one computes using notation from~\cite[Theorem 5.2]{BJRW} that
$$
g_\Psi = 4\gamma_{m-1}dzd\bar z = m^2 g_{\mathbb{S}^2}.
$$
As a result, $\bar\del\Psi$ has no zeroes and $\Psi$ is a quaternionic harmonic map. In fact, one sees that the map $\Psi$ is a map by Dirac eigenspinors on the round sphere associated with the  eigenvalue $\lambda_k = m$, i.e. $k = \frac{m(m+1)}{2}+1$.
\end{example}
\begin{remark}
\label{rem:FLPP}
 The notion of quaternionic harmonic maps is related to the notion of quaternionic holomorphic maps studied in~\cite{FLPP}. Consider the twistor projection $\pi\colon\CP^{2m-1}\to\HP^{m-1}$, then it follows from~\cite[Lemma 2.7]{FLPP} that $\Psi\colon M\to\CP^{2m-1}$ satisfies $L_{-1} = I(L_0)$ if and only if $\pi\Psi$ is a quaternionic holomorphic map. The latter is equivalent to saying that $\pi_*\bar\del\Psi = 0$, i.e. the $\bar\del$-derivative of $\Psi$ is vertical .
\end{remark}

\section{$\Lambda_1(M,\mC,S)$ as minimum of energy} 
\label{sec:energyproof}

In the present section we aim to obtain a characterisation for the quantity $\Lambda_1(M,\mC,S)$ in terms of the variational theory for the energy functional. We are motivated by the Hersch's inequality~\cite{Hersch} and the min-max characterisation for maximisers of Laplace eigenvalues~\cite{KS}. Since we are dealing exclusively with the first eigenvalue, Remark~\ref{rmk:lambda1_min} implies that it is sufficient to consider maps to $\CP^1$.

\subsection{Energy of maps by eigenspinors}
\label{subsec:energy}
For a given spin structure $S$ on $M$ and a conformal class $\mathcal{C}$, we consider the set 
\begin{align*}
    \Gamma_{\mathcal{C},S} = \{\Psi = [\psi_+ : \bar\psi_-] : M \to \CP^1 \,|\, \exists g \in \mathcal{C}, \lambda \neq 0, \D_g\psi = \lambda \psi\}
\end{align*}
of all maps obtained from eigenspinors (with non-zero eigenvalues) as described in the previous section.

\begin{lemma} \label{lem:U-energy}
    Let $\psi = (\psi_+, \psi_-)$ be an eigenspinor on $(M, S, g)$, with eigenvalue $\lambda \neq 0$. Let $\Psi = [\psi_+ : \bar\psi_-]\colon M \to \CP^1$, so that $\Psi\in\Gamma_{\mC,S}$. Then
    \begin{align*}
        E^{(0,1)}(\Psi, [g]) = \lambda^2 \vol(M,g).
    \end{align*}
 In particular, for any conformal class $\mC$ and any spin structure $S$ on $M$ one has
 \begin{equation}
 \label{eq:energy_gamma}
  \Lambda_1(M,\mC,S)^2 = \inf_{\Psi \in \Gamma_{\mC,S}} E^{(0,1)}(\Psi, \mathcal{C}).
 \end{equation}
\end{lemma}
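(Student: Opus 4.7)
The plan is to reduce the lemma to a pointwise identity $|\bar\del\Psi|_g^2\equiv\lambda^2$ on $M$, from which the first statement is immediate by integration, and the variational identity \eqref{eq:energy_gamma} then follows by a short comparison argument.

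First, I would work in a local holomorphic coordinate $z$ with a trivializing section $s_0$ of $S$ satisfying $s_0\otimes s_0 = dz$, writing $\psi = (f_+ s_0,\bar f_-\bar s_0)$ and $F=(f_+,f_-)$ so that $\Psi=[F]$ locally. The Dirac eigenvalue equation, in the form already derived in \eqref{eq:eigsp_qharm}, gives
\[
\del_{\bar z}F = \frac{\lambda}{2|s_0|^2}(-\bar f_-,\bar f_+),
\]
from which a two-line computation shows $\del_{\bar z}F\perp F$ (the cross-terms cancel in the Hermitian inner product) and $|\del_{\bar z}F|^2 = \tfrac{\lambda^2|F|^2}{4|s_0|^4}$. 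Plugging into the Fubini--Study formula \eqref{eq:FS}, the $|F|^2$ cancels and one obtains $|\bar\del\Psi|^2_g = \lambda^2/(|s_0|^4|\del_z|^2_g)$. The compatibility relation $|s_0|^4|\del_z|^2_g\equiv 1$, which is a direct consequence of $s_0\otimes s_0=dz$ together with the normalisation of the Hermitian metric on $S$ induced from that on $K$ (as set up in Section~\ref{sec:harm}), then yields $|\bar\del\Psi|^2_g\equiv\lambda^2$ on the complement of the zero set of $\psi$. Since the right-hand side is constant, this extends across the zero locus using the continuous extension of $\Psi$ from Lemma~\ref{lem:extend}. Integrating,
\[
E^{(0,1)}(\Psi,[g]) = \int_M |\bar\del\Psi|^2_g\,dv_g = \lambda^2\vol(M,g),
\]
with the left-hand side depending only on $[g]$ by the conformal invariance of the $(0,1)$-energy on surfaces.

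For \eqref{eq:energy_gamma}, I would then argue by a standard two-sided comparison. Any $\Psi\in\Gamma_{\mC,S}$ arises from an eigenspinor of some eigenvalue $\lambda\neq 0$ on some $g\in\mC$; combining the first part with $|\lambda|\ge \lambda_1(M,g,S)$ gives
\[
E^{(0,1)}(\Psi,\mC) = \lambda^2\vol(M,g) \ge \lambda_1(M,g,S)^2\vol(M,g) = \bar\lambda_1(M,g,S)^2 \ge \Lambda_1(M,\mC,S)^2,
\]
so $\inf_{\Gamma_{\mC,S}}E^{(0,1)}\ge \Lambda_1^2$. Conversely, each $g\in\mC$ supplies a first eigenspinor whose associated map $\Psi\in\Gamma_{\mC,S}$ realises $E^{(0,1)}(\Psi)=\bar\lambda_1(M,g,S)^2$; infimising over $g\in\mC$ produces the matching upper bound.

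The only nontrivial step I foresee is verifying the compatibility identity $|s_0|^4|\del_z|^2_g\equiv 1$; this is pure bookkeeping of the conventions in Section~\ref{sec:harm} and is in fact already implicit in the computation $|\bar\del\Psi|^2_g = \lambda_k^2$ displayed at the end of Section~\ref{sec:geomcrit}. Everything else is a direct substitution plus integration.
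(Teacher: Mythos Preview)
Your proof is correct and reaches the same pointwise identity $|\bar\del\Psi|_g^2=\lambda^2$ that the paper's argument establishes, but you get there by a slightly more direct route. The paper introduces the auxiliary conformally invariant functional
\[
U(\psi,g)=\int_{M\setminus Z}\frac{|\D_g\psi|_g^2}{|\psi|_g^2}\,dv_g,
\]
evaluates it once in the conformal metric $g_\psi=|\psi|_g^4 g$ to obtain $\lambda^2\vol(M,g)$, and then a second time in local coordinates to identify the integrand with $|\bar\del\Psi|_g^2$. You bypass $U$ entirely: from the eigenvalue relation~\eqref{eq:eigsp_qharm} you read off $\del_{\bar z}F\perp F$ and $|\del_{\bar z}F|^2=\tfrac{\lambda^2}{4|s_0|^4}|F|^2$, plug into~\eqref{eq:FS}, and use $|s_0|^4|\del_{\bar z}|_g^2=1$. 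This is equivalent but shorter, since the paper's detour through $g_\psi$ is really just a repackaging of the trivial identity $|\D_g\psi|^2/|\psi|^2=\lambda^2$. Your identification of the compatibility $|s_0|^4|\del_{\bar z}|_g^2=1$ as the only bookkeeping point is accurate; the paper uses it implicitly in the line $4|s_0|_g^6|\del_{\bar z}F|^2/(|s_0|^2|F|^2)=4|\del_{\bar z}F|^2/(|\del_{\bar z}|_g^2|F|^2)$.

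For~\eqref{eq:energy_gamma} your two-sided comparison is correct and in fact more explicit than the paper, which simply writes ``taking the infimum over all maps in $\Gamma_{\mC,S}$ yields~\eqref{eq:energy_gamma}'' without spelling out either inequality. One cosmetic remark: your appeal to Lemma~\ref{lem:extend} to extend the identity across $Z$ is not strictly needed for the integral, since $Z$ is finite and hence null; integrating $|\bar\del\Psi|_g^2=\lambda^2$ over $M\setminus Z$ already gives $E^{(0,1)}(\Psi)=\lambda^2\vol(M,g)$.
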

\begin{proof}
    Let $Z = \{p \in M\colon \psi(p) = 0\}$ (note that it is a discrete set). Let 
    \begin{align*}
        U(\psi,g) = \int_{M \setminus Z} \frac{|D_g \psi|_g^2}{|\psi|_g^2} dv_g,
    \end{align*}
    so that $U(\psi,h) = U(\psi,g)$ for any $h \in [g]$. 

    Define the metric $g_\psi = |\psi|^4_g g$ on $M\setminus Z$,  so that $|\psi|^2_{g_\psi} = 1$. On one hand, we have
    \begin{align*}
        U(\psi, g_{\psi}) &= \int_{M \setminus Z} |D_{g_{\psi}} \psi|^2_{g_{\psi}} dv_{g_\psi} \\
                          &= \int_{M \setminus Z} |D_g \psi|^2_{g_{\psi}} dv_g \\
                          &= \int_{M \setminus Z} \lambda^2 |\psi|^2_{g_\psi} dv_g \\
                          &= \lambda^2 \vol(M\setminus Z, g) = \lambda^2 \vol(M,g).
    \end{align*}

    On the other hand, let $s_0$ be a local holomorphic section of $S$ such that $s_0 \otimes s_0 = dz$ and write $\psi = (f_+ s_0, \bar{f}_- \bar{s}_0), F= (f_+, f_-)$. Then
    \begin{align*}
        \frac{|D_g \psi|_g^2}{|\psi|_g^2} &= 4 \frac{|s_0|_g^6 |\del_\bz F|^2}{|s_0|^2 |F|^2}
        = 4 \frac{|\del_\bz F|^2}{|\del_\bz|^2_g |F|^2}
    \end{align*}
    and since $\psi$ is an eigenspinor, $\del_\bz F \perp F$ so
    \begin{align*}
        \frac{|D_g \psi|_g^2}{|\psi|_g^2} = 4 \frac{|\pi_{F^\perp} \del_\bz F|^2}{|\del_\bz|^2_g |F|^2} = |\bar\del \Psi|^2_g,
    \end{align*}
    where we used~\eqref{eq:FS} in the last equality. Integrating we obtain
    \begin{align*}
        U(\psi, g) = \int_{M\setminus Z} |\bar\del \Psi|^2 dv_g = E^{(0,1)}(\Psi, [g]).
    \end{align*}
    Taking the infimum over all maps in $\Gamma_{\mC,S}$ yields~\eqref{eq:energy_gamma}.
\end{proof}

As a first application, we give a more geometric proof of~\eqref{ineq:Bar}.

\begin{theorem}[\cite{B0}] \label{thm:bar}
 Let $M=\mathbb{S}^2$ with its unique spin structure $S$. Then for any Riemannian metric $g$ one has
    \begin{align*}
        \lambda_1^2(M, g,S) \area(M,g) \geq 4 \pi,
    \end{align*}
    with equality if and only if $g$ is a round metric.
\end{theorem}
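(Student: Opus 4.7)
The plan is to apply Lemma~\ref{lem:U-energy} to translate B\"ar's inequality into a lower bound on the energy $E^{(0,1)}$ of the projective map induced by a first eigenspinor, and then to deduce this bound from a purely topological (degree) computation. This approach avoids having to verify harmonicity of $\Psi$, which would otherwise require the critical condition $|\psi|^2\equiv 1$ that is not available for an arbitrary metric. To set things up: since $\ker \D_g=0$ on $\mathbb{S}^2$, one has $\lambda_1>0$; let $\psi=(\psi_+,\psi_-)$ be a first eigenspinor and $\Psi=[\psi_+:\bar\psi_-]\colon\mathbb{S}^2\to\CP^1$ the map provided by Lemma~\ref{lem:extend}. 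By Lemma~\ref{lem:U-energy},
$$
E^{(0,1)}(\Psi,[g])=\lambda_1^2\,\vol(\mathbb{S}^2,g)=\bar\lambda_1^2(\mathbb{S}^2,g,S),
$$
so it suffices to prove $E^{(0,1)}(\Psi)\ge 4\pi$.

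The key step is to show $\deg(\Psi)\le -1$. The observation is that $(\psi_+,\bar\psi_-)$ is a section of $S\oplus S\cong S\otimes\CC^2$ whose value at each $p$ with $\psi(p)\neq 0$ lies, by the very definition of $\Psi$, in $S_p\otimes L_{\Psi(p)}$. Hence $(\psi_+,\bar\psi_-)$ may be viewed as a section of $S\otimes \Psi^*L$ vanishing precisely on the common zero divisor $D$ of $\psi_+$ and $\bar\psi_-$ (a finite divisor by Lemma~\ref{lem:extend}), yielding the line bundle isomorphism $\Psi^*L\cong S^*\otimes[D]$. Since $S$ is a square root of the canonical bundle $K$ (of degree $-2$) on $\mathbb{S}^2$, one has $\deg S=-1$, and therefore
$$
\deg(\Psi)=-\deg(\Psi^*L)=\deg S - |D|=-1-|D|\le -1.
$$
Combined with $E^{(1,0)}(\Psi)\ge 0$ and~\eqref{eq:Edeg}, this gives $E^{(0,1)}(\Psi)=E^{(1,0)}(\Psi)-4\pi\deg(\Psi)\ge 4\pi$, completing the inequality.

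For the equality case $\bar\lambda_1^2=4\pi$, both bounds above must be saturated: $E^{(1,0)}(\Psi)=0$ so $\Psi$ is anti-holomorphic, and $|D|=0$ so $\psi$ is nowhere vanishing, making $\Psi$ a smooth anti-biholomorphism of $\mathbb{S}^2$. The pointwise identity $|\bar\del\Psi|_g^2=|\D_g\psi|_g^2/|\psi|_g^2=\lambda_1^2$ from the proof of Lemma~\ref{lem:U-energy} shows that the conformal factor between $g$ and $\Psi^*g_{\mathbb{S}^2}$ is constant; since $\Psi^*g_{\mathbb{S}^2}$ is the pullback of a constant curvature metric by a diffeomorphism, it has constant Gaussian curvature, and consequently $g$ must be round. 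The converse (that any round metric realises equality) is a direct calculation on the unit sphere.

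The step I expect to require the most care is the bundle-theoretic degree computation, in particular the identification $\Psi^*L\cong S^*\otimes[D]$ with the correct signs and the verification that formula~\eqref{eq:Edeg} applies to the (a priori only continuously) extended map $\Psi$. Once the inequality $\deg(\Psi)\le -1$ is established, the rest of the argument reduces to a mechanical combination of Lemma~\ref{lem:U-energy} with~\eqref{eq:Edeg}.
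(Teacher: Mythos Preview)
Your proposal is correct and follows the same overall strategy as the paper: apply Lemma~\ref{lem:U-energy} to a first eigenspinor to get $E^{(0,1)}(\Psi)=\bar\lambda_1^2$, establish $\deg(\Psi)\le -1$, and conclude via~\eqref{eq:Edeg}; the equality analysis is also identical. The only difference is in the degree step. The paper invokes the Eells--Wood index formula $\mathrm{Index}(\bar\del\Psi)=-\chi(\mathbb{S}^2)-\deg(\Psi)\chi(\CP^1)$ together with $\mathrm{Index}(\bar\del\Psi)\ge 0$, while you read off $\deg(\Psi^*L)=1+|D|$ directly from the section $(\psi_+,\bar\psi_-)$ of $S\otimes\Psi^*L$. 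These are equivalent line-bundle computations (the zeros of $\bar\del\Psi$ are exactly those of $\psi$, and one checks $\mathrm{Index}(\bar\del\Psi)=2|D|$), but your packaging has the modest advantage of sidestepping the fact that $\Psi$ is \emph{not} harmonic for an arbitrary metric $g$, so the Eells--Wood formula---usually stated for harmonic maps---would otherwise require a word of justification; your concern about the non-negativity of the local indices in $|D|$ is legitimate but is handled by the local expansion in Lemma~\ref{lem:extend}.
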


\begin{proof}
    Write $S$ for the unique spin structure on $M=\mathbb{S}^2$. Let $\psi$ be a first eigenspinor, $\D_g \psi = \lambda_1 \psi$. It is known that $\lambda_1 > 0$. Let $\Psi = [\psi_- : \bar\psi_+] : M \to \CP^1$ be its associated map. Then
    \begin{align*}
        \mathrm{Index}(\bar\del \Psi) = -\chi(M) - \deg(\Psi) \chi(\CP^1)
    \end{align*}
    where $\mathrm{Index}(\bar\del \Psi)$ is the total number of zeroes of $\bar\del \Psi$ counted with multiplicity, $\chi$ is the Euler characteristic and $\deg$ is the mapping degree (see e.g. \cite{EW} for the proof).
    Thus,
    \begin{align*}
        \deg (\Psi) = - \frac{\chi(M) + \mathrm{Index}(\bar\del \Psi)}{\chi(\CP^1)} \leq -1.
    \end{align*}
    Set $d = -\deg(\Psi) \geq 1$.
    By Lemma \ref{lem:U-energy}, 
    \begin{align*}
        E^{(0,1)}(\Psi) = \lambda_1^2 \area(\mathbb{S}^2, g).
    \end{align*}
Combining with~\eqref{eq:Edeg} one has
$$
\lambda_1^2 \area(\mathbb{S}^2, g) = 4\pi d + E^{(1,0)}(\Psi)\geq 4\pi.
$$
    Suppose we have the equality $\lambda_1^2 \area(\mathbb{S}^2, g) = 4\pi$. Then $d = 1$ and $E^{(1,0)}(\Psi) = 0$, i.e.
$\Psi$ is an anti-holomorphic diffeomorphism and, in particular, $\Psi$ is conformal. Hence, $\frac{1}{2}|d\Psi|_g^2g = \Psi^* g_{\CP^1}$ and 
    \begin{align*}
        \frac{1}{2}|d\Psi|_g^2 = |\del \Psi|_g^2 + |\bar\del \Psi|_g^2 = |\bar\del \Psi|_g^2 = \lambda_1^2(g).
    \end{align*}
    So $g$ is the round metric up to a constant scaling.
\end{proof}

Another possible way to use formula~\eqref{eq:energy_gamma} is to apply calculus of variations techniques in order to show that the infimum on the right hand side is achieved on some critical point of the functional $E^{(0,1)}$, i.e. a harmonic map. This would provide an alternative way to establish the existence of minimising metrics, i.e. to reprove Theorem~\ref{theo:Am}. However, the definition of the set $\Gamma_{\mC,S}$ is not particularly geometric, which makes such an approach difficult. Instead, below we use Theorem~\ref{theo:Am} to provide a more geometric formulation of~\eqref{eq:energy_gamma}. 

We first observe that any non-holomorphic harmonic map to $\CP^1$ satisfies condition (1) of Definition~\ref{def:spinharm}. Using this we can define 
\begin{align*}
    H_{\mathcal{C},S} &= \left\{\Psi : (M,C)\to \CP^1 \,|\, \text{$\Psi$ is harmonic, $\Psi$ is non-holomorphic,} \right.\\
                      &\left.\quad \text{all the zeroes of $\bar\del_0 \Psi$ are of even order, and  $\Psi^*L^* \otimes \left[\frac{1}{2}(\bar\del_0 \Psi)\right] \cong S$}\right\},
\end{align*}
where the isomorphism on the right-hand side is given by Proposition~\ref{prop:H_harm}. In particular, Proposition~\ref{prop:H_harm} implies that $H_{\mC,S}\subset \Gamma_{\mC,S}$. The following theorem states that the infimum on the right-hand side of~\eqref{eq:energy_gamma} is achieved on an element of $H_{\mathcal C,S}$, up to a formation of a bubble. 

\begin{theorem}\label{thm:energy}
    Let $S$ be a spin structure on $M$ and $\mathcal{C}$ be a conformal class. Then
    \begin{align*}
        \Lambda_1(M,\mathcal{C},S)^2 = \min \left\{\inf_{\Psi \in H_{\mathcal{C},S}} E_\mC^{(0,1)}(\Psi), 4\pi\right\}.
    \end{align*}
    In particular, if $\inf_{\Psi \in H_{\mathcal{C},S}} E_\mC^{(0,1)}(\Psi)<4\pi$, then there exists $\Psi\in H_{\mC,S}$ such that $\Lambda_1(M,\mC,S)^2 = E_\mC^{(0,1)}(\Psi)$. 
\end{theorem}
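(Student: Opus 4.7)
The plan is to identify $\Lambda_1(M,\mC,S)^2$ as $\inf_{\Gamma_{\mC,S}} E^{(0,1)}$ via Lemma~\ref{lem:U-energy}, and then---using the inclusion $H_{\mC,S}\subset\Gamma_{\mC,S}$ supplied by Proposition~\ref{prop:H_harm} together with the existence result in Theorem~\ref{theo:Am}---to identify this value with $\inf_{H_{\mC,S}} E^{(0,1)}$, except in the bubbling regime where it is $4\pi$.

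First I would combine Lemma~\ref{lem:U-energy} with the inclusion $H_{\mC,S}\subset\Gamma_{\mC,S}$ from Proposition~\ref{prop:H_harm} to obtain the trivial inequality
$$
\Lambda_1(M,\mC,S)^2 \;\leq\; \inf_{\Psi\in H_{\mC,S}} E^{(0,1)}_\mC(\Psi).
$$
Second, I would prove the bubbling bound $\Lambda_1(M,\mC,S)^2 \leq 4\pi$ by a standard concentration argument: starting from any $g\in\mC$, fix a point $p\in M$ and isothermal coordinates near $p$; via stereographic projection, transplant a first Dirac eigenspinor of a round $\mathbb{S}^2$ of area $1-o(1)$ into a disk of radius $\varepsilon$ around $p$ (exploiting the conformal covariance $\D_{e^{2\omega}g} = e^{-\omega}\D_g$ to handle the spinor weight), cut off smoothly, and use the result as a test spinor in the min-max characterisation of $\lambda_1^2$. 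As $\varepsilon\to 0$ the Rayleigh quotient converges to $4\pi$, yielding the claim.

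For the matching lower bound, assume $\inf_{\Psi \in H_{\mC,S}} E^{(0,1)}_\mC(\Psi) < 4\pi$. A fortiori $\Lambda_1^2<4\pi$, so Theorem~\ref{theo:Am} supplies a conformal minimiser $g_{\min}\in\mC$. By Remark~\ref{rmk:lambda1_min}, every first eigenspinor $\psi$ of $g_{\min}$ has constant length, and after a trivial rescaling $|\psi|^2\equiv 1$. Theorem~\ref{prop:harmo} with $m = 1$ then makes $\Psi:=[\psi_+:\bar\psi_-]\colon M\to\CP^1$ harmonic; equation~\eqref{eq:eigsp_qharm} combined with $\lambda\ne 0$ forces $\bar\del\Psi\not\equiv 0$, so $\Psi$ is non-holomorphic; and Proposition~\ref{prop:H_harm} identifies $\Psi$ as a map in $H_{\mC,S}$ whose induced spin structure coincides with $S$. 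The formula $E^{(0,1)}_\mC(\Psi) = \bar\lambda_1(M,g_{\min},S)^2 = \Lambda_1^2$ from the end of Section~\ref{sec:geomcrit} then gives $\inf_H E^{(0,1)} \leq \Lambda_1^2$, so the infimum is in fact attained and equals $\Lambda_1^2$. If instead $\inf_H E^{(0,1)}\geq 4\pi$, the trivial direction and the bubbling bound yield $\Lambda_1^2\leq 4\pi$, and the reverse inequality holds by contradiction: were $\Lambda_1^2<4\pi$, the argument above would produce a map in $H_{\mC,S}$ of energy $\Lambda_1^2<4\pi$, contradicting the hypothesis. Hence $\Lambda_1^2 = 4\pi = \min\{\inf_H E^{(0,1)}, 4\pi\}$.

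The main obstacle is the careful execution of the bubbling bound: making the spinor transplantation rigorous requires tracking the conformal weight on spinors under $\D_{e^{2\omega}g} = e^{-\omega}\D_g$ and controlling the cut-off contribution to the Rayleigh quotient, so that the limiting value is exactly $4\pi$ rather than a nearby quantity. Everything else assembles from results proved earlier in the paper.
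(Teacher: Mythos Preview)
Your proof is correct and follows essentially the same route as the paper. The only substantive difference is that the paper does not carry out the bubbling bound $\Lambda_1(M,\mC,S)^2\le 4\pi$ by hand but simply cites it from~\cite{GH} (the spinorial analogue of Aubin's inequality); your proposed concentration argument is exactly how that result is proved, but the details---tracking the conformal weight on spinors, controlling the cut-off error, and ruling out interference from harmonic spinors in the Rayleigh quotient---are genuinely delicate and occupy most of~\cite{GH}, so citing it is the cleaner option. Aside from this, your write-up is in fact slightly more careful than the paper's: you explicitly invoke Remark~\ref{rmk:lambda1_min} to justify that a single unit-length eigenspinor suffices at the minimiser, a step the paper leaves implicit.
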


\begin{proof}
%[Proof of Theorem \ref{thm:energy}]
    %Note that since the target space is $\CP^1$, $\dim(L_0^\perp) = 1$ so for harmonic and non-holomorphic $\Psi: M \to \CP^1$, we automatically have $L_{-1} = L_0$. Hence $H_{S,\mathcal{C}}$ is the set of all good harmonic maps to $\CP^1$ which induces the spin structure $S$ on $M$. 

    It is known from~\cite{GH} that $\Lambda_1(M,\mathcal{C},S)^2 \leq 4\pi$, so it suffices to prove the equality in the case $\inf_{\Psi \in H_{\mC,S}} E^{(0,1)} (\Psi) < 4\pi$. In this case, let $\Psi \in H_{\mathcal{C}, S}$ such that $E^{(0,1)}(\Psi) < 4\pi$. Since $H_{\mathcal{C},S}$ corresponds to the quaternionic harmonic maps, by Proposition \ref{prop:spin-harmonic} there exists a metric $g_\Psi$ and eigenspinor $\psi$, $\D_{g_\Psi} \psi = \lambda \psi$, such that $\Psi = [\psi_{+} : \bar\psi_{-}]$. Then by Lemma \ref{lem:U-energy}, 
    \begin{align*}
        4\pi > E^{(0,1)}(\Psi) = \lambda^2 \vol(M,g_\Psi) \geq \Lambda_1(M,\mathcal{C},S)^2.
    \end{align*}
    It  follows from Theorem~\ref{theo:Am}  that there exists a conformally minimal metric $g$ with $\Lambda_1(M,\mathcal{C},S) = \bar\lambda_1(M,g,S)$. By Theorem \ref{prop:harmo}, we obtain a quaternionic harmonic map $\Phi\colon M \to \CP^1$ induced by an eigenspinor with eigenvalue $\lambda_1(M,g,S)$. So $\Phi \in H_{\mathcal{C},S}$ and 
    \begin{align*}
        \Lambda_1(M,\mathcal{C},S)^2 = \bar\lambda_1(M,g,S)^2 = E^{(0,1)}_\mC(\Phi) \geq \inf_{\Psi \in H_{\mC,S}} E_\mC^{(0,1)}(\Psi).
    \end{align*}
    The reverse inequality is a consequence of Lemma \ref{lem:U-energy} combined with the inclusion $H_{\mC,S}\subset \Gamma_{\mC,S}$.
\end{proof}

\subsection{Minimisers on the torus}
\label{subsec:torus}

In the last part of this section, we use the previous characterisation of conformally extremal metrics in terms of harmonic maps to prove Theorem \ref{thm:torus}. As an example,  and because this result will be needed later, we compute the Dirac eigenvalues on the flat torus. We follow the method used in \cite[Section 2.1]{Ginoux}.

For any vector $(a,b)\in \mathbb{R}^2$, $b\ne 0$ one has a lattice $\Gamma=\Gamma_{(a,b)} = \mathbb{Z}(1,0) + \mathbb{Z}(a,b)$ and the corresponding flat torus $\mathbb{T}^2 = \mathbb{R}^2/\Gamma_{a,b}$ with the flat metric $g_{(a,b)}$ induced by the projection map. The spin structure is given by a homomorphism $\chi\colon \Gamma \to \ZZ/2\ZZ$, so that the global sections of the spinor bundle $S$ can be identified with complex functions $\psi$ on $\RR^2=\C$ satisfying 
\begin{equation}
\label{eq:chi_period}
\psi(z+\gamma) = (-1)^{\chi(\gamma)}\psi(z)
\end{equation}
for all $\gamma\in\Gamma$, $z\in\C$. The canonical bundle on $\mathbb{T}^2$ is trivialised by a global section $dz$ and the isomorphism $S\otimes S\to K$ is given by $\psi_1\otimes\psi_2\mapsto \psi_1\psi_2\,dz$. Since $|dz|_{g_{a,b}} = 1$, $\bar S$ is identified with $S$ via $\bar S\otimes K\to S$. With these identifications, a global section of $S\oplus \bar S$ consists of two complex functions $\psi_+,\psi_-$ on $\C$, satisfying~\eqref{eq:chi_period} and the Dirac operator with respect to the flat metric $g_{a,b}$ is given by 
\begin{align*}
    \D\begin{pmatrix} \psi_+ \\ \psi_- \end{pmatrix} &= 2 \begin{pmatrix}
        0 & \del_z \\
        - \del_{\bar z} & 0 
    \end{pmatrix}
        \begin{pmatrix} \psi_+ \\ \psi_- \end{pmatrix}.
\end{align*}

The spectrum of $\mD$ is best described in terms of the dual lattice $\Gamma^*$. Recall that $\Gamma^*$ is the lattice in the dual space $(\RR^2)^*$ defined as $\gamma^*\in \Gamma^*$ if and only if $\gamma^*(\gamma)\in \ZZ$ for all $\gamma\in \Gamma$. For a homomorphism $\chi\colon \Gamma \to \ZZ/2\ZZ$, an affine lattice $\Gamma^*_\chi$ is defined as follows: $\xi\in \Gamma_\chi^*$ if and only if  $\xi(\gamma) + \frac{1}{2}\chi(\gamma)\in \ZZ$. It is easy to see that $\Gamma^*_\chi = \Gamma^* + \eta$, where $\eta = \frac{1}{2}\sum\limits_{j=1,2}\chi(\gamma_j)\gamma_j^*$ for some basis $(\gamma_1,\gamma_2)$ of $\Gamma$ and the dual basis $(\gamma_1^*,\gamma_2^*)$ of $\Gamma^*$.

\begin{lemma} \label{lem:dirac-vp}

    Let $S$ be a spin structure on $(\mathbb{T}^2,g_{(a,b)})$ and $\chi\colon \Gamma \to \ZZ/2\ZZ$ be its associated homomorphism. 

    The Dirac spectrum on $(\mathbb{T}^2,g_{(a,b)})$ is given by
    \begin{align*}
        \mathrm{Spec}(\D) = \left\{\pm 2 \pi |\xi| : \xi \in \Gamma^*_\chi \right\}.
    \end{align*}
\end{lemma}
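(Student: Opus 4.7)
The plan is to diagonalise $\mD$ via Fourier analysis on the flat torus, using the description of spinors as twisted-periodic functions on $\C$ introduced just before the statement.

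First, I identify global sections of $S \oplus \bar S$ with pairs of complex-valued functions $(\psi_+, \psi_-)$ on $\C$ satisfying the twisted periodicity~\eqref{eq:chi_period}. For each $\xi \in (\RR^2)^*$, the plane wave $e_\xi(z) := e^{2\pi i \langle \xi, z\rangle}$ satisfies $e_\xi(z+\gamma) = e^{2\pi i \xi(\gamma)} e_\xi(z)$, so it obeys~\eqref{eq:chi_period} if and only if $e^{2\pi i \xi(\gamma)} = (-1)^{\chi(\gamma)}$ for all $\gamma \in \Gamma$, i.e. $\xi(\gamma) + \tfrac{1}{2}\chi(\gamma) \in \ZZ$, i.e. $\xi \in \Gamma^*_\chi$. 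Standard Fourier theory (applied after multiplying by $e_{-\eta}$ to reduce to the genuinely periodic case) then gives that $\{e_\xi\}_{\xi \in \Gamma^*_\chi}$ is a complete orthogonal family in the $L^2$-space of twisted-periodic functions. Consequently, the two-dimensional subspaces $V_\xi := \C\cdot(e_\xi, 0) \oplus \C\cdot(0, e_\xi)$ give an orthogonal decomposition of $L^2(S\oplus\bar S)$ into $\mD$-invariant subspaces.

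Next, I compute the action of $\mD$ on $V_\xi$. Writing the real pairing as $\langle \xi, z\rangle = \tfrac{1}{2}(\bar\mu z + \mu \bar z)$ for a unique $\mu \in \C$ with $|\mu| = |\xi|$, one has $\del_z e_\xi = \pi i \bar\mu\, e_\xi$ and $\del_{\bar z} e_\xi = \pi i \mu\, e_\xi$. Plugging into the explicit formula for $\mD$, one sees that $\mD$ acts on $V_\xi$ in the basis $\{(e_\xi, 0), (0, e_\xi)\}$ by the matrix
\[
M_\xi = \begin{pmatrix} 0 & 2\pi i \bar\mu \\ -2\pi i \mu & 0 \end{pmatrix},
\]
whose characteristic polynomial is $\lambda^2 - 4\pi^2|\mu|^2$, yielding eigenvalues $\pm 2\pi|\xi|$. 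Taking the union over $\xi \in \Gamma^*_\chi$ produces the claimed spectrum, and since the $V_\xi$ span $L^2$ there are no other eigenvalues.

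There is no real obstacle here; the only point requiring care is the bookkeeping matching the affine shift defining $\Gamma^*_\chi$ with the twisted periodicity~\eqref{eq:chi_period}, which is done in the first paragraph.
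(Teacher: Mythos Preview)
Your proof is correct and follows essentially the same approach as the paper's: both decompose $L^2(\Sigma\mathbb{T}^2)$ into the two-dimensional spaces $V_\xi$ spanned by plane waves $e^{2\pi i\xi}$ times constant spinors, compute the resulting $2\times 2$ matrix (your $M_\xi$ is the paper's $2\pi i\,\mathbf{M}$), and invoke Fourier completeness for the reverse inclusion. Your write-up is in fact slightly more careful than the paper's about why the twisted periodicity singles out exactly $\xi\in\Gamma^*_\chi$ and about reducing the completeness statement to ordinary Fourier series via multiplication by $e_{-\eta}$.
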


\begin{proof}

    For a given constant spinor $\phi = (\phi_+, \phi_-)$ on $\mathbb{R}^2$ and $\xi \in \Gamma_\chi^*$ consider the following spinor on $\mathbb{R}^2$
    \begin{align*}
        \psi(x,y) = e^{2\pi i \xi(x,y)} \phi,
    \end{align*}
    By construction, this spinor satisfies the periodicity conditions and corresponds to a spinor on $\mathbb{T}^2$. 

    Complexifying $\xi\in (\RR^2)^*$ gives an element in $(\C^2)^*$ which we denote by the same letter $\xi$. A quick calculation gives
    \begin{align*}
        \D \psi(x,y) = \frac{4 \pi i}{2} \underbrace{\begin{pmatrix}
                0 & \xi(1,-i) \\
                - \xi(1,i) & 0 
        \end{pmatrix}}_{\mathbf{M}} \psi(x,y).
    \end{align*}
    Thus, the spinor $\psi$ is an eigenspinor if it is an eigenvector of $\mathbf{M}$, which happens if and only if $\phi$ has the same property. It is easy to see that the eigenvalues of $\mathbf{M}$ are $\pm i |\xi|$, hence
    \begin{align*}
        \{\pm 2 \pi |\xi| : \xi \in \Gamma_\chi^*\} \subset \mathrm{Spec}(\D).
    \end{align*}
    To have the equality between the two sets, it suffices to remark that $\{e^{2\pi i\gamma^*} : \gamma^* \in \Gamma^*\}$ is a basis of $L^2(\mathbb{R}^2/\Gamma)$, so the spinors $\psi_j$, constructed from a basis of eigenvectors $\phi_j$ of $\mathbf{M}$, form a basis of $L^2(\Sigma T)$ when considering all $\gamma^* \in \Gamma^*$. 
\end{proof}

Finally, let us describe the moduli space of flat tori with spin-structures. The spin structure $S_{0}$ associated with a trivial homomorphism $\chi$ is accordingly called trivial. The moduli space of flat tori endowed with $S_{0}$ coincides with the usual  moduli space of oriented flat tori, i.e. one can always arrange that $|a|\leq 1/2$ and $a^2+b^2\geq 1$. The Dirac operator for this structure has a non-trivial kernel corresponding to constant spinors on $\RR^2$. For those values of $a,b$ one has 
\begin{equation}
\label{eq:lambda1_S0}
\lambda_1(\mathbb{T}^2,g_{a,b}, S_0) = \frac{2\pi}{b},\qquad\bar\lambda_1(\mathbb{T}^2,g_{a,b}, S_0) = \frac{2\pi}{\sqrt{b}}.
\end{equation}

All the other spin structures are isomorphic after a change of basis of the lattice $\Gamma$, this spin-structure is referred to as {\em non-trivial} and is denoted simply by $S_1$. In this case we can arrange that $\chi(1,0) = 0$, $\chi(a,b) = 1$, $|a|\leq 1/2$ and $b^2+(|a|-1/2)^2\geq 1/4$. The Dirac operator for $S_1$ is invertible. 
For those values of $a,b$ one has 
$\lambda_1(\mathbb{T}^2,g_{a,b}, S_1) = \frac{\pi}{b}$ and $\bar\lambda_1(\mathbb{T}^2,g_{a,b}, S_1) = \frac{\pi}{\sqrt{b}}$.

The main result of this section concides with Theorem \ref{thm:torustrivial} for $S_0$, and extends it 
to the case of $S_1$.
\begin{theorem} \label{thm:torus}
    Let $b_S = 2\pi$ if $S=S_0$ is the trivial spin structure on $\mathbb{T}^2$, and $b_S = \frac{\pi}{2}$ otherwise.  Then for all $b > b_S$ and all $a$, $|a|\le 1/2$,  one has
    $$
    \Lambda_1(\mathbb{T}^2,[g_{a,b}],S) = \bar\lambda_1(\mathbb{T}^2,g_{a,b}, S) = d_S\frac{\pi}{\sqrt{b}},
    $$
    where $d_S = 2$ if $S=S_0$ and $d_S=1$ otherwise. Furthermore, the flat metric is the unique smooth minimiser.
\end{theorem}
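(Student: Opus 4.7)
The plan is to combine Theorem~\ref{thm:energy}, the index bound used in the proof of Theorem~\ref{thm:bar}, and the classification of low-energy harmonic maps $\mathbb{T}^2\to\mathbb{S}^2$ from \cite[Corollary~6.6]{FLPP}. First, I would apply Lemma~\ref{lem:dirac-vp} to obtain the explicit evaluation: for $|a|\leq 1/2$ in the admissible fundamental domain and $b>b_S$, the shortest element of $\Gamma^*_\chi$ has length $d_S/(2b)$, yielding $\bar\lambda_1(\mathbb{T}^2,g_{a,b},S)^2 = d_S^2\pi^2/b$. The condition $b>b_S$ is precisely $\bar\lambda_1^2<2\pi$, and in particular $\bar\lambda_1^2<4\pi$, so Theorem~\ref{thm:energy} provides a minimiser $\Psi_0\in H_{[g_{a,b}],S}$ with $E^{(0,1)}_\mC(\Psi_0) = \Lambda_1(\mathbb{T}^2,[g_{a,b}],S)^2 \leq \bar\lambda_1^2 < 2\pi$.

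Next I would bound the total energy of $\Psi_0$. Since $\Psi_0$ is harmonic and non-holomorphic, the index formula used in the proof of Theorem~\ref{thm:bar} gives $\mathrm{Index}(\bar\partial\Psi_0) = -\chi(\mathbb{T}^2)-2\deg(\Psi_0) = -2\deg(\Psi_0)\geq 0$, hence $\deg(\Psi_0)\leq 0$. Combined with~\eqref{eq:Edeg}, this yields
\[
E_\mC(\Psi_0) = 2E^{(0,1)}_\mC(\Psi_0) + 4\pi\deg(\Psi_0) \leq 2E^{(0,1)}_\mC(\Psi_0) < 4\pi,
\]
so by \cite[Corollary~6.6]{FLPP} the map $\Psi_0$ factors through a great circle $\mathbb{S}^1\subset\mathbb{S}^2=\CP^1$.

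Every harmonic map $\mathbb{T}^2\to\mathbb{S}^1$ has the form $\Psi(z) = e^{2\pi i\zeta\cdot z + ic}$ for some $\zeta\in\Gamma^*$, and a short direct computation in the flat metric gives the constant density $|\bar\partial\Psi|^2_{g_{a,b}} = \pi^2|\zeta|^2$, hence $E^{(0,1)}_\mC(\Psi)=\pi^2 b|\zeta|^2$. The compatibility $S_\Psi\cong S$ built into the definition of $H_{\mC,S}$ becomes, via Proposition~\ref{prop:H_harm} (equivalently, by noting that the eigenspinor with wavenumber $\xi\in\Gamma^*_\chi$ produces the map with $\zeta=-2\xi$), the parity condition $\zeta\cdot\gamma_j\equiv\chi(\gamma_j)\pmod 2$ on a basis $\gamma_1,\gamma_2$ of $\Gamma$. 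A short lattice argument using $b>b_S$ then shows that $|\zeta|^2$ subject to this constraint is minimised at $\zeta=\pm d_S\gamma_2^*$ with value $d_S^2/b^2$; substitution gives $\Lambda_1(\mathbb{T}^2,[g_{a,b}],S)^2 = d_S^2\pi^2/b$ as claimed.

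For uniqueness I would invoke Proposition~\ref{prop:spin-harmonic}: any smooth minimiser $g$ has the form $g=\alpha|\bar\partial\Psi|^2_g g$ for a minimising $\Psi\in H_{[g_{a,b}],S}$. The preceding paragraph forces $\Psi$ to be (up to complex conjugation) the essentially unique such map, and the constancy of $|\bar\partial\Psi|^2_{g_{a,b}}$ together with the conformal transformation rule $|\bar\partial\Psi|^2_{e^{2\omega}g_{a,b}} = e^{-2\omega}|\bar\partial\Psi|^2_{g_{a,b}}$ forces the conformal factor of $g$ relative to $g_{a,b}$ to be constant, so $g$ is a constant multiple of $g_{a,b}$. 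Smoothness is automatic since Theorem~\ref{theo:Am} allows at most $\gamma-1=0$ conical singularities. The main technical obstacle will be matching the wavenumber parameterisation of eigenspinors (by the affine lattice $\Gamma^*_\chi$) with the winding parameterisation of harmonic maps to $\mathbb{S}^1$ (by $\Gamma^*$), which is what delivers the parity condition on $\zeta$ and closes the loop between the two sides of Theorem~\ref{thm:energy}.
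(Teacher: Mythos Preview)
Your proposal is correct and follows the same overall strategy as the paper: obtain a minimising quaternionic harmonic map $\Psi_0\colon\mathbb{T}^2\to\CP^1$, bound its energy below $4\pi$, invoke \cite[Corollary~6.6]{FLPP} to force $\Psi_0$ into a great circle, and read off the flat metric. There are two minor differences worth noting. First, for the degree bound you use only the index formula to get $\deg(\Psi_0)\le 0$, which already suffices for $E(\Psi_0)<4\pi$; the paper instead cites \cite{EW} to get $\deg=0$ or $|\deg|\ge 2$ and rules out the latter case. Your route is slightly more economical. Second, in the final step the paper simply observes that any circle map has $|\bar\partial\Psi|^2_{g_{a,b}}$ constant, so $g_{\min}\propto g_\Psi\propto g_{a,b}$ is flat, whence $\Lambda_1=\bar\lambda_1(g_{a,b})$ immediately; you instead compute $E^{(0,1)}$ for each circle map, translate the spin-compatibility $S_\Psi\cong S$ into the parity constraint $\zeta(\gamma_j)\equiv\chi(\gamma_j)\pmod 2$, and minimise $|\zeta|^2$ over the admissible sublattice. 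Both arguments are valid, but the paper's version sidesteps the lattice optimisation and the explicit identification of $S_\Psi$ for circle maps, which is the ``technical obstacle'' you anticipate at the end of your sketch.
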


We recall the following theorem, which appears as~\cite[Corollary 6.6]{FLPP}. Together with Theorem~\ref{thm:energy}, it is the main ingredient in the proof of Theorem~\ref{thm:torus}.

\begin{theorem}[\cite{FLPP}] \label{thm:quater}
    If $F\colon \mathbb{T}^2 \to \mathbb{S}^2$ is a harmonic map with energy $E(F) < 4\pi$, then $F$ is a map to a circle $\mathbb{S}^1 \subset \mathbb{S}^2$.
\end{theorem}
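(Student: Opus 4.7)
By Lemma \ref{lem:dirac-vp}, $\lambda_1(\mathbb{T}^2, g_{a,b}, S) = 2\pi\min_{\xi\in\Gamma^*_\chi\setminus\{0\}}|\xi|$, and for $|a|\leq 1/2$ and $b > b_S$ an explicit enumeration gives this minimum as $d_S/(2b)$, so $\bar\lambda_1 = d_S\pi/\sqrt{b}$. The associated Dirac eigenspinor is of constant pointwise norm, so the flat metric is quaternionic harmonic, yielding the upper bound $\Lambda_1^2\leq\bar\lambda_1^2 = d_S^2\pi^2/b < 2\pi < 4\pi$. By Theorem \ref{thm:energy} there then exists a quaternionic harmonic map $\Psi\in H_{[g_{a,b}], S}$ realizing $\Lambda_1^2 = E^{(0,1)}(\Psi)$. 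My plan is to use Theorem \ref{thm:quater} to force $\Psi$ to factor through a great circle and then identify it as an affine map associated with the shortest vector in the appropriate coset of $\Gamma^*$.

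First I claim $\deg(\Psi) = 0$. By definition $\Psi$ is non-holomorphic; it cannot be anti-holomorphic either, since \eqref{eq:Edeg} would then force $E^{(0,1)}(\Psi) = -4\pi\deg(\Psi)\geq 4\pi$, contradicting $E^{(0,1)}(\Psi) < 4\pi$. For such a harmonic map, both $\del\Psi$ and $\bar\del\Psi$ are (anti)holomorphic sections with isolated zeros, and the index formula used in the proof of Theorem \ref{thm:bar} (with $\chi(\mathbb{T}^2)=0$) gives zero counts $2\deg(\Psi)$ and $-2\deg(\Psi)$ respectively. Both must be nonnegative, forcing $\deg(\Psi) = 0$. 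Then $E(\Psi) = 2\Lambda_1^2 < 4\pi$, and Theorem \ref{thm:quater} provides a factorization $\Psi\colon\mathbb{T}^2\to\mathbb{S}^1\subset\mathbb{S}^2$ through some great circle.

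After composing with an isometry of $\CP^1$, assume $\Psi$ lands in the equator $\{[w:1]:|w|=1\}$. By standard Hodge theory on $\mathbb{T}^2$, $\Psi$ lifts to an affine map $z\mapsto 2\pi m\cdot z + c$ with $m\in\Gamma^*$. A Chern-class computation (using the $\pi$-holonomy of $\mathcal{O}(1)$ around the equator) shows that $\Psi^*L^*$ has holonomy $(-1)^{m(\gamma)}$ around $\gamma\in\Gamma$, so by Proposition \ref{prop:H_harm} the induced spin structure corresponds to the class of $m\bmod 2\Gamma^*\in\Gamma^*/2\Gamma^*$; matching this to $\chi$ restricts $m\in\tilde\chi+2\Gamma^*$ for a fixed lift $\tilde\chi$ of $\chi$. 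Since $E^{(1,0)}(\Psi)=E^{(0,1)}(\Psi)$ for circle-valued maps, a direct computation yields $E^{(0,1)}(\Psi) = \pi^2 b|m|^2$, hence
\begin{align*}
\Lambda_1^2 = E^{(0,1)}(\Psi) \geq \pi^2 b\min_{m\in(\tilde\chi+2\Gamma^*)\setminus\{0\}}|m|^2.
\end{align*}
For $|a|\leq 1/2$ and $b > b_S$, enumeration shows the shortest such $m$ is $\pm(0, d_S/b)$, uniquely up to sign (all other candidates $(2k,\cdot)$ with $k\neq 0$ have norm $\geq 2$), which gives $\Lambda_1^2\geq d_S^2\pi^2/b = \bar\lambda_1^2$ and matches the upper bound. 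For this uniquely determined minimizing $m$, $|\bar\del\Psi|_g^2$ is constant, so by Proposition \ref{prop:spin-harmonic} the critical metric $g_\Psi$ is a constant rescaling of $g_{a,b}$, confirming the flat metric as the unique smooth minimizer.

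The main technical step is the holonomy computation in the third paragraph: correctly identifying the spin structure produced by Proposition \ref{prop:H_harm} from the winding vector $m\in\Gamma^*$, which pins down the coset $\tilde\chi+2\Gamma^*$ and the exact value of $d_S$ for each spin structure. The other key input, bounding $\deg(\Psi)$ from both sides via the two index counts on the torus, is what lets the FLPP bound reduce an infinite-dimensional optimization to a finite lattice problem.
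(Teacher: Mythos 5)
Your proposal does not prove the statement in question. The statement is the Ferus--Leschke--Pedit--Pinkall energy estimate itself: \emph{every} harmonic map $F\colon\mathbb{T}^2\to\mathbb{S}^2$ with $E(F)<4\pi$ takes values in a great circle. What you have written is instead a proof of Theorem \ref{thm:torus} (the identification of flat metrics as $\bar\lambda_1$-conformal minimisers on the torus), and in your second paragraph you explicitly invoke Theorem \ref{thm:quater} --- the very statement you were asked to prove --- to conclude that $\Psi$ factors through a circle. As a proof of the stated theorem the argument is therefore circular, and nothing in it addresses a general harmonic map $F\colon\mathbb{T}^2\to\mathbb{S}^2$ of small energy: you only ever consider the particular maps built from Dirac eigenspinors of a minimising metric in a fixed conformal class, which is a much more restricted situation.

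For the record, the paper does not prove this statement either; it is quoted verbatim from \cite[Corollary 6.6]{FLPP}, where the proof goes through quaternionic holomorphic geometry (the quaternionic Pl\"ucker formula and the holomorphic structure induced on the pullback of the tautological bundle by a degree-zero harmonic torus). A self-contained argument would have to show that a harmonic torus in $\mathbb{S}^2$ which is not circle-valued carries a nontrivial quaternionic linear system whose Pl\"ucker estimate already forces $E\geq 4\pi$; none of the lattice enumeration, degree counting, or holonomy computations in your proposal engage with that mechanism. Those steps are inputs to the \emph{application} of Theorem \ref{thm:quater} inside the proof of Theorem \ref{thm:torus}, and indeed they track the paper's own proof of that theorem reasonably closely, but they do not constitute a proof of Theorem \ref{thm:quater} itself.
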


\begin{proof}[Proof of Theorem \ref{thm:torus}]
    The proof proceeds in three steps: first Theorem~\ref{theo:Am} is used to show the existence of a conformally minimal metric. Then, from this metric and the corresponding $\lambda_1$-eigenspinors, we obtain  a harmonic map $\Psi\colon \mathbb{T}^2 \to \mathbb{S}^2$  and compute its energy to apply Theorem \ref{thm:quater}. Finally, we use that $\Psi$ is a harmonic map from the torus to $\mathbb{S}^1$ constructed from eigenspinors to show that the conformal factor $|dz| = |s_0|^2$ for the conformally minimal metric must be constant.

Let us first assume that the spin structure is trivial, i.e. $S=S_0$. Then by~\eqref{eq:lambda1_S0} we have that
$$
\Lambda_1(\mathbb{T}^2, [g_{(a,b)}], S_0)\leq \frac{2\pi}{\sqrt{b}}.
$$
Hence, for $b > \pi$, the existence condition in Theorem~\ref{theo:Am} is satisfied and there exists a minimiser $g_{\min}$ achieving $\Lambda_1(\mathbb{T}^2, [g_{(a,b)}], S_0)$. Furthermore, since the genus of $\mathbb{T}^2$ is $1$, this metric has no singularities.

For any such minimiser $g_{\min}$ by Proposition \ref{prop:harmo} and Remark~\ref{rmk:lambda1_min}, there exists an eigenspinor $\psi = (\psi_+, \psi_-)$ on $\mathbb{T}^2$ such that
    \begin{align*}
        \D_{g_{\min}} \psi = \lambda_1(\mathbb{T}^2,g_{\min},S_0) \psi \qquad \text{and} \quad |\psi|^2_{g_{\min}} = 1 \text{ on $\mathbb{T}^2$}.
    \end{align*}
 This eigenspinor gives rise to a harmonic map $\Psi\colon \mathbb{T}^2 \to \CP^1 = \mathbb{S}^2$,  and by Lemma \ref{lem:U-energy}
    \begin{align*}
        \lambda_1(\mathbb{T}^2,g_{\min},S_0)^2 \area(M,g_{\min}) = E^{(0,1)}(\Psi).
    \end{align*}
Note that by \cite{EW}, the degree $d$ of a harmonic map from the torus to the sphere is either $0$ or $|d| \geq 2$. In the second case, the map is holomorphic if $d>0$ or anti-holomorphic if $d<0$. Since $\Psi$ comes from an eigenspinor with $\lambda_1 \neq 0$, it cannot be holomorphic (as that would imply $\D \psi = 0$). At the same time,  if $\Psi$ is anti-holomorphic,  then $\lambda_1(\mathbb{T}^2,g_{\min},S_0)^2 = E^{(0,1)}(\Psi) = E(\Psi) = 4 |d| \pi > 4\pi$, which contradicts the upper bound on $\Lambda_1(\mathbb{T}^2, [g_{(a,b)}], S_0)$. Hence, $d=0$ and~\eqref{eq:Edeg} implies that
\begin{equation}
\label{eq:E_torus}
E(\Psi)  = 2 E^{(0,1)}(\Psi) + d  = 2E^{(0,1)}(\Psi) = 2\bar \lambda_1(\mathbb{T}^2,g_{\min},S_0)^2< 4\pi
\end{equation}
as long as $b>2\pi$.
 
Therefore, by Theorem~\ref{thm:quater}, $\Psi$ maps to a great circle $\mathbb{S}^1 \subset \mathbb{S}^2 = \CP^1$. Up to a rotation of the sphere we can assume that $\Psi = [\phi : 1]$, where $\phi\colon \mathbb{T}^2\to \mathbb{S}^1\subset\C$ is a harmonic map to a unit circle. By~\cite[Section 7]{EL} one has $\phi(x,y) = e^{2\pi i\gamma^*(x,y)}$, $\gamma^*\in\Gamma^*$. By Proposition~\ref{prop:harmo} one has that $g_{\min}$ is proportional to $g_\Psi = \frac{1}{2}|\bar\del\Psi|_{g_{(a,b)}}^2g_{(a,b)}$. Therefore, by~\eqref{eq:FS} with $F= (\phi,1)$ and $L=\C F$ we compute  
$$
g_\Psi = 2\frac{|\pi_{L^\perp}\partial_zF|^2}{|F|^2}g_{(a,b)} = 2\pi^2|\gamma^*|^2g_{(a,b)}.
$$
As a result, $g_{\min}$ is proportional to $g_{(a,b)}$, and hence, flat.

For the non-trivial spin structure $S_1$ the proof is the same with the main difference being
$$
\Lambda_1(\mathbb{T}^2, [g_{(a,b)}], S_1)\leq \frac{\pi}{\sqrt{b}},
$$
so that the existence condition is satisfied for $b>\pi/4$. The corresponding map $\Psi$ to $\CP^1$ has degree $0$ and energy 
$$
E(\Psi) = 2\bar \lambda^2_1(\mathbb{T}^2, [g_{(a,b)}], S_1) = \frac{2\pi^2}{b},
$$
which is smaller than $4\pi$ for $b>\pi/2$. Hence,  $\Psi$ is a map to $\mathbb{S}^2$ and the rest of the proof can be repeated verbatim.
\end{proof}
Concerning the other conformal classes, \eqref{eq:E_torus} implies that if a minimising metric exists, then it corresponds to a degree $0$ harmonic map $\Psi\colon \mathbb{T}^2\to \CP^1$ of energy less than $8\pi$. According to Proposition~\ref{prop:H_harm} any such map induces a spin structure on $\mathbb{T}^2$, either the trivial $S_0$ or the non-trivial $S_1$. For the trivial structure $S_0$, to the best of our knowledge, the only known maps with these properties are the maps to equator $\mathbb{S}^1\subset\CP^1$, which by~\eqref{eq:lambda1_S0} and~\eqref{eq:E_torus} have energy below $8\pi$ only for $b>\pi$. This motivates Conjecture \ref{conj:torus}.
\begin{remark}
For the non-trivial spin-structure $S_1$ and conformal classes $[g_{0,b}]$, $0<b<1$ there exist maps of degree $0$ into $\CP^1$ with energy smaller than $8\pi$. Those maps are Gauss maps of 
rotationally symmetric cmc-surfaces in $\RR^3$ called Delaunay unduloids, see \cite{Am}.  It was conjectured in \cite{Am}
that the corresponding (non-flat) metrics are coformally minimal for the first eigenvalue.   While the flat metric could be  a minimiser for some  $b \le \frac{\pi}{2}$, it is difficult  to formulate a more precise conjecture in the case of $S_1$. 
Note that it was shown in \cite{AmHu1} that as $b \to 0$, $\Lambda_1(\mathbb{T}^2, [g_{a,b}], S_1) \to 2\sqrt{\pi}$
which is consistent with the properties of the unduloids. It seems plausible that in contrast to the trivial spin structure (cf. Conjecture \ref{conj:torus}), in the case of $S_1$, smooth minimisers exist for all conformal classes. 
\end{remark}

\section{Minimal surfaces and critical metrics}
\label{sec:glob_crit}

\subsection{Standard definition of the Dirac operator} In the previous sections we used complex geometry to describe the Dirac operator. It is an especially convenient point of view in the situation where the conformal class is fixed, because varying metric only affects the metric on the spinor bundle. Once the conformal structure varies,  the spinor bundle varies as well, so the complex geometric picture is not always the most convenient and sometimes the traditional definition of Dirac operator is more appropriate. For that reason, in this section we give the standard definition of Dirac operator and show that it is equivalent to the one we used before.

Throughout this section $(M,g)$ is an oriented surface, $S$ is a spin structure, $\Sigma M = S\oplus\bar S$ is a spinor bundle and we consider the associated Hermitian metrics on $S$ and $\bar S$. The spin structure and the metric define the maps $\mu^{(1,0)}_g\colon K\otimes \bar S \to S$ and $\mu^{(0,1)}_g\colon\bar K\otimes S\to \bar S$. Combining $\mu^{(1,0)}_g$ with $\left(-\mu^{(0,1)}_g\right)$ and zero maps $K\otimes \bar S \to \bar S$, $\bar K\otimes S \to S$, yields the map $\mu_g\colon T_\C M^*\otimes (S\oplus \bar S)\to (S\oplus \bar S)$, which is sometimes called {\em Clifford multiplication}. We will reserve that term for the adjoint map.

\begin{definition}
    The {\it Clifford multiplication} is the $\CC$-bilinear map $(\cdot) : T_\CC M \otimes \Sigma M \to \Sigma M$ given by $X\cdot\psi = \mu_g(X^\flat\otimes \psi)$, where $X\mapsto X^\flat$ is an isomorphism between $T_\CC M$ and $T_\C M^*$ induced by the Hermitian metric, see~\eqref{def:A}. 
\end{definition}

In local coordinates, if $s_0$ is such that $s_0\otimes s_0 = dz$, then
    \begin{align*}
        \del_\bz \cdot \begin{pmatrix} f_+s_0 \\ f_-\bar s_0 \end{pmatrix}= \frac{1}{|s_0|^2} \begin{pmatrix} f_-s_0 \\ 0\end{pmatrix} \qquad%\begin{pmatrix} 0 & 1 \\ 0 & 0 \end{pmatrix} \\
        \del_z \cdot \begin{pmatrix} f_+s_0 \\ f_-\bar s_0 \end{pmatrix} = \frac{1}{|s_0|^{2}} \begin{pmatrix} 0 \\ -f_+\bar s_0\end{pmatrix}.
       %\begin{pmatrix} 0 & 0 \\ -1 & 0 \end{pmatrix}
    \end{align*}

\begin{lemma}
Clifford multiplication satisfies the following properties
\begin{enumerate}
\item $X\cdot(Y\cdot \psi) + Y\cdot(X\cdot\psi) = -2g_\C(X,Y)\psi$, where $g_\C$ is $\C$-bilinear extension of $g$ to $T_\C M$
\item For the Hermitian metric $\la\cdot,\cdot\ra$ on $S\oplus \bar S$  one has
$$
\la X\cdot\psi,\phi\ra = -\la \psi, \bar X\cdot \phi\ra.
$$
In particular, Clifford multiplication by a real tangent vector is a skew-Hermitian operator.
\end{enumerate}
\end{lemma}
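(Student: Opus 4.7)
The plan is to verify both identities pointwise in a local holomorphic coordinate, reducing everything to routine calculations using the explicit formulas for $\cdot$ displayed just above the lemma. Both statements are $\CC$-(bi)linear in the tangent vector arguments, so by choosing a local holomorphic coordinate $z$ and a local section $s_0$ of $S$ with $s_0\otimes s_0 = dz$, it suffices to verify (1) and (2) for $X, Y$ ranging over the basis $\{\del_z, \del_\bz\}$ of $T_\CC M$ and for spinors written as $\psi = (f_+ s_0, f_-\bar s_0)$ and $\phi = (g_+ s_0, g_-\bar s_0)$.

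For property (1), the ``diagonal'' cases $X = Y = \del_z$ and $X = Y = \del_\bz$ are immediate: the formulas show that $\del_z \cdot$ sends every spinor into the $\bar S$ summand, and the $\bar S$ summand is killed by a second application of $\del_z \cdot$, so $\del_z\cdot\del_z\cdot\psi = 0$; this is consistent with $g_\CC(\del_z,\del_z) = 0$, and the case $\del_\bz\cdot\del_\bz$ is identical. For the mixed case, one computes directly from the formulas that $\del_z\cdot\del_\bz\cdot\psi + \del_\bz\cdot\del_z\cdot\psi$ is a scalar multiple of $\psi$ whose coefficient is a power of $|s_0|_g$, and then matches it against $-2 g_\CC(\del_z, \del_\bz)\psi$ computed from the conformal factor of $g$.

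For property (2), the Hermitian metric on $\Sigma M = S\oplus\bar S$ is locally $\la \psi,\phi\ra = (f_+\bar g_+ + f_-\bar g_-)|s_0|_g^2$, using that $|s_0|_g = |\bar s_0|_g$. Taking $X = \del_\bz$ (so that $\bar X = \del_z$) and substituting the local Clifford multiplication formulas, a short computation yields
\[
\la \del_\bz \cdot \psi, \phi\ra = f_-\bar g_+, \qquad \la \psi, \del_z \cdot \phi\ra = -f_-\bar g_+,
\]
which proves (2) for $X = \del_\bz$; the symmetric computation settles $X = \del_z$, and $\CC$-linearity extends the identity to all $X \in T_\CC M$. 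Restricting to a real tangent vector $X$ gives $\bar X = X$, and (2) specialises to $\la X\cdot \psi,\phi\ra = -\la \psi, X\cdot\phi\ra$, i.e.\ the skew-Hermitian property.

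The main technical care lies in (1), where the bookkeeping of $|s_0|_g$ factors must be done precisely so that the anti-commutator comes out with exactly the coefficient prescribed by $-2g_\CC(\del_z,\del_\bz)$; once that is in place, the whole lemma is a direct verification and no conceptual obstacle arises.
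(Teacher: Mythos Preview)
Your proposal is correct and follows essentially the same approach as the paper: both reduce to a local coordinate computation in the frame $\{\del_z,\del_\bz\}$ using the explicit Clifford multiplication formulas, with (2) checked directly for $X=\del_\bz$ and (1) verified by comparing the anticommutator against $g_\CC$ in that frame. The only cosmetic difference is that for (1) the paper packages $X=a\del_z+b\del_\bz$, $Y=c\del_z+d\del_\bz$ into a single $2\times 2$ matrix product, whereas you verify the three basis cases $(\del_z,\del_z)$, $(\del_\bz,\del_\bz)$, $(\del_z,\del_\bz)$ separately and invoke bilinearity; these are the same computation.
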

\begin{proof}
(1) Writing $X = a\del_z + b\del_\bz$ and $Y= c\del_z + d\del_\bz$,  one has
  \begin{align*}
X\cdot(Y\cdot ) + Y\cdot(X\cdot) &= |s_0|_g^{-4}\begin{pmatrix} 0 & b\\ -a & 0 \end{pmatrix}\begin{pmatrix} 0 & d\\ -c & 0 \end{pmatrix} +|s_0|_g^{-4}\begin{pmatrix} 0 & d\\ -c & 0 \end{pmatrix} \begin{pmatrix} 0 & b\\ -a & 0 \end{pmatrix} = \\
& = -|s_0|_g^{-4}\begin{pmatrix} ad+bc & 0\\ 0 & ad+bc \end{pmatrix}.
  \end{align*}
  Observing that $g = |s_0|^{-4}(dx^2+dy^2)$ one has $g_\C(\del_z,\del_z)= g_\C(\del_\bz,\del_\bz) = 0$ and $g_\C(\del_z,\del_\bz)= g_\C(\del_\bz,\del_z) = \frac{|s_0|^{-4}}{2}$ completes the proof.
  
(2) By linearity and symmetry it is sufficient to check the equality for $X=\del_\bz$. Writing $\psi = (f_+s_0,f_-\bar s_0)$ and $\phi = (k_+s_0,k_-\bar s_0)$ one has
$$
|s_0|^2\la X\cdot\psi,\phi\ra = f_-\bar k_+ = - |s_0|^2\la \psi,X\cdot \phi\ra.
$$ 
\end{proof}

Recall that a Chern connection on a holomorphic bundle equipped with a Hermitian metric is a unique connection that preserves the metric and whose $(0,1)$-component coincides with $\bar\del$-operator~\cite{GrHa}.

\begin{definition}
The {\em Clifford connection} on $\Sigma M$ is defined as 
$$
\nabla^{Cl}_X(\psi_+,\psi_-):=(\nabla^{Ch}_X\psi_+, \overline{\nabla^{Ch}_{\bar X}\bar\psi_-}),
$$
where $\nabla^{Ch}$ is the Chern connection on $S$.
\end{definition}

Since Chern connection preserves the metric, Clifford connection does as well. In local coordinates,
\begin{align*}
        \nabla^{Cl}_{\del_\bz} \begin{pmatrix} f_+s_0 \\ f_-\bar s_0 \end{pmatrix} &=  \begin{pmatrix} (\del_{\bz} f_+)s_0  \\ (\del_{\bz}f_- + f_-\del_{\bz}\log|s_0|^2)\bar s_0\end{pmatrix} \\
        \nabla^{Cl}_{\del_z} \begin{pmatrix} f_+s_0 \\ f_-\bar s_0 \end{pmatrix} &=  \begin{pmatrix} (\del_{z}f_+ + f_+\del_{z}\log|s_0|^2)s_0  \\ (\del_z f_-)\bar s_0\end{pmatrix}.
\end{align*}

\begin{lemma}
Clifford connection preserves Clifford multiplication, i.e.
\begin{equation}
\label{eq:clif_con_mult}
\nabla^{Cl}_X(Y\cdot\psi) = (\nabla^{LC}_X Y)\cdot\psi + Y\cdot(\nabla_X^{Cl}\psi),
\end{equation}
where $\nabla^{LC}$ is the Levi-Civita connection.
\end{lemma}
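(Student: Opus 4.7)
The plan is to verify the identity locally, using the formulas for Clifford multiplication and the Clifford connection that have just been spelled out. Both sides of \eqref{eq:clif_con_mult} are $C^\infty(M)$-linear in $X$ by construction, and a short application of the Leibniz rules for $\nabla^{Cl}$ and $\nabla^{LC}$ shows they are tensorial in $Y$: substituting $fY$ for $Y$ contributes $X(f)\,Y\cdot\psi$ to each side, and these cancel. The same Leibniz argument in $\psi$ reduces matters to verifying \eqref{eq:clif_con_mult} for $\psi$ equal to one of the two local frame sections $(s_0,0)$ or $(0,\bar s_0)$, where $z$ is a local holomorphic coordinate and $s_0\otimes s_0 = dz$. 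The problem therefore reduces to the finitely many cases $X,Y\in\{\del_z,\del_\bz\}$ applied to these two basis sections.

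For the substitution I would use that the only non-vanishing Levi-Civita Christoffel symbols of the conformal metric $g=|s_0|^{-4}(dx^2+dy^2)$ in complex coordinates are
\[
\nabla^{LC}_{\del_z}\del_z = -2(\del_z\log|s_0|^2)\,\del_z, \qquad \nabla^{LC}_{\del_\bz}\del_\bz = -2(\del_\bz\log|s_0|^2)\,\del_\bz.
\]
The mixed cases $X\neq Y$ then reduce to a routine cancellation between the Chern-connection correction in $\nabla^{Cl}_X$ and the derivative of the factor $|s_0|^{-2}$ introduced by Clifford multiplication by $Y$. The only substantive case is $X=Y=\del_\bz$, which by complex conjugation also handles $X=Y=\del_z$; for $\psi=(f_+s_0,f_-\bar s_0)$ the identity reduces to the elementary equality
\[
\del_\bz(|s_0|^{-2}f_-) \;=\; -2(\del_\bz\log|s_0|^2)|s_0|^{-2}f_- \;+\; \bigl[(\del_\bz\log|s_0|^2)|s_0|^{-2}f_- + |s_0|^{-2}\del_\bz f_-\bigr],
\]
where the left-hand side is the unique non-zero component of $\nabla^{Cl}_{\del_\bz}(\del_\bz\cdot\psi)$ and the two bracketed terms on the right are the first components of $(\nabla^{LC}_{\del_\bz}\del_\bz)\cdot\psi$ and $\del_\bz\cdot\nabla^{Cl}_{\del_\bz}\psi$, respectively.

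The main obstacle is purely notational: one must scrupulously track the factors of $|s_0|^{\pm 2}$ introduced by the musical isomorphism $X\mapsto X^\flat$ in the definition of Clifford multiplication, and keep straight that the connection coefficient of the Chern connection on $S$ is $\del\log|s_0|^2$ while on $\bar S$ it is $\bar\del\log|s_0|^2$. A more conceptual route is also available: the Chern connection induced on $S\otimes S\cong K$ coincides with the dual of the $(1,0)$-part of the Levi-Civita connection on $T^{(1,0)}M$, so that $\mu_g$ is a parallel bundle homomorphism and \eqref{eq:clif_con_mult} becomes automatic; but in the complex-geometric setup of this section the direct coordinate computation seems the shortest option.
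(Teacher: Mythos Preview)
Your proposal is correct and follows essentially the same route as the paper: a local coordinate verification using the explicit formulas for Clifford multiplication, the Clifford connection, and the Levi--Civita connection of the conformal metric. The paper reduces via the conjugation $\overline{(\psi_+,\psi_-)}=(\bar\psi_-,\bar\psi_+)$ to the two cases $X=Y=\del_z$ and $X=\bar Y=\del_z$ and writes both out explicitly, whereas you first invoke tensoriality in $X$ and $Y$ and then spell out only the diagonal case $X=Y=\del_\bz$, treating the mixed case as the routine cancellation it is; these are cosmetic differences, and your displayed identity for the diagonal case is exactly the computation the paper performs.
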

\begin{proof} Defining $\overline{(\psi_+,\psi_-)} = (\bar\psi_-,\bar\psi_+)$, we see from the definition that $\overline{\nabla^{Cl}_X\psi} = \nabla^{Cl}_{\bar X}\bar\psi$. Hence, by linearity it is sufficient to check~\eqref{eq:clif_con_mult} in two cases: $X=Y=\del_{z}$ and $X=\bar Y = \del_{z}$.

If $X=Y=\del_{z}$, then in local coordinates for $\psi = (f_+s_0,f_-\bar s_0)$ one has
\begin{equation}
\label{CLXY}
\nabla^{Cl}_X(Y\cdot\psi) = 
\nabla^{Cl}_{\del_z}\begin{pmatrix} 0 \\ -\frac{f_+}{|s_0|^2} \bar s_0 \end{pmatrix} =
\begin{pmatrix} 0 \\ -\del_z\left(\frac{f_+}{|s_0|^2}\right) \bar s_0 \end{pmatrix}
\end{equation}
and
\begin{align*}
(\nabla^{LC}_X Y)\cdot\psi &+ Y\cdot(\nabla_X^{Cl}\psi) =\\
 &=\del_{z}(\log|s_0|^{-4})\del_z\cdot\begin{pmatrix} f_+s_0 \\ f_-\bar s_0 \end{pmatrix} + \del_z\cdot \begin{pmatrix} (\del_{z}f_+ + f_+\del_{z}\log|s_0|^2)s_0  \\ (\del_z f_-)\bar s_0\end{pmatrix} = \\
&= \frac{1}{|s_0|^2}\begin{pmatrix} 0\\2f_+\del_{z}(\log|s_0|^2) - (\del_{z}f_+ + f_+\del_{z}\log|s_0|^2)\end{pmatrix}
\end{align*}
and it is easy to see that the expression on the right-hand side is the same as  in \eqref{CLXY}. 

If $X=\bar Y=\del_{z}$, then in local coordinates for $\psi = (f_+s_0,f_-\bar s_0)$ one has
$$
\nabla^{Cl}_X(Y\cdot\psi) = 
\nabla^{Cl}_{\del_z}\begin{pmatrix} \frac{f_-}{|s_0|^2} s_0  \\ 0 \end{pmatrix} =
\begin{pmatrix} \left(\del_z\left(\frac{f_-}{|s_0|^2}\right) +\frac{f_-}{|s_0|^2}\del_z\log |s_0|^2\right) s_0 \\ 0  \end{pmatrix} = \begin{pmatrix} \frac{\del_z f_-}{|s_0|^2} s_0\\ 0\end{pmatrix}
$$
and 
\begin{align*}
(\nabla^{LC}_X Y)\cdot\psi &+ Y\cdot(\nabla_X^{Cl}\psi) = \del_{\bz} \cdot \begin{pmatrix} (\del_{z}f_+ + f_+\del_{z}\log|s_0|^2)s_0  \\ (\del_z f_-)\bar s_0\end{pmatrix}= \begin{pmatrix} \frac{\del_z f_-}{|s_0|^2} s_0\\ 0\end{pmatrix}.
\end{align*}
\end{proof}

Usually, the spinor bundle $\Sigma M$ is defined in terms of a $\Spin_n$-principal bundle $\Spin \, M$ over $M$ and a \emph{spinor representation} $\rho : \Spin_n \to \Sigma$ of $\Spin_n$ in the vector space $\Sigma$. The bundle $\Spin \, M$  correspond to a choice of a non-trivial 2-fold cover of $\mathrm{SO}_g M$, the $\mathrm{SO}_n$-principal bundle of positively oriented orthonormal bases of $TM$ with respect to the metric $g$. Then $\Sigma M$ is defined as the associated vector bundle to $(\Spin \, M, \rho):$ $\Sigma M = \Spin \, M \times_\rho \Sigma$. In the case $M$ is a Kähler manifold, the spinor bundle takes a special form, which in dimension 2 gives the isomorphism $\Sigma M \cong S \oplus \bar{S}$ (see \cite[Section 3.4]{F} for details). This justifies our definition of the spinor bundle as $S \oplus \bar{S}$. 

    The Dirac operator is normally defined in any dimension as
    \begin{align*}
        \D_g = \sum_{j = 1}^n e_j \cdot \nabla^{Cl}_{e_j},
    \end{align*}
    where $(e_1,\dots, e_n)$ is a local orthonormal basis of the tangent space. 
    In dimension 2, we compute
    \begin{align*}
        \D_g = e_1\cdot \nabla^{Cl}_{e_1} + e_2\cdot \nabla^{Cl}_{e_2} =2\left( |s_0|^2\del_{\bz}\cdot\nabla^{Cl}_{|s_0|^2\del_z} + |s_0|^2\del_{z}\cdot\nabla^{Cl}_{|s_0|^2\del_\bz} \right),
    \end{align*}
    so that 
    \begin{align*}
    \D_g\begin{pmatrix} f_+s_0 \\ f_-\bar s_0\end{pmatrix} = 2|s_0|^2\begin{pmatrix} (\del_zf_-)s_0\\ -(\del_\bz f_+)\bar s_0\end{pmatrix}, \end{align*} which coincides with  the definition we have been using earlier.

\subsection{Global criticality condition}

In this section, we need to change the way we enumerate the Dirac eigenvalues. Indeed if we enumerate the eigenvalues with $\lambda_1$ being the first positive eigenvalue, then any change in the dimension of the kernel of $\D$, which may happen when the conformal class changes \cite{Hit}, would cause a discontinuity of the eigenvalues. To have continuous eigenvalues $\lambda_k$, we instead index them by enumerating their squares in an increasing order:
\begin{align*}
    0 \leq \lambda_{\bi{1}}^2 \leq \lambda_{\bi{2}}^2 \leq \lambda_{\bi{3}}^2 \leq  \dots \nearrow +\infty.
\end{align*}
Here we use a bar over the indices to distinguish  this enumeration from the one we  used previously. In particular, note that $\lambda_{\bi{1}}$ is no longer the first positive eigenvalue and will be 0 if the kernel of $\D$ is non-empty
(this observation leads, in particular, to a positivity assumption in Proposition~\ref{prop:globcrit}).
Due to the symmetry of the Dirac spectrum with respect to 0 in dimension 2, we don't get any new eigenvalues when considering $\lambda^2$ instead of the non-negative eigenvalues, i.e. for any $\bi{k}$ for which $\lambda_{\bi{k}} > 0$, there exists a $k \geq 1$ such that $\lambda_{\bi{k}} = \lambda_k$ and the multiplicity of $\lambda_{\bi{k}}$ is twice the one of $\lambda_k$.

A formula for the variation of the Dirac eigenvalues under metric perturbations was obtained in  \cite{BG}.  It was  shown  that under analytic  variations $g(t)$ of the metric $g$, the derivative of an analytic branch $\lambda^{(j)}(t)$ of an eigenvalue $\lambda$ is given by
\begin{align} \label{eq:lambda-derivative}
    \frac{d}{dt} \lambda^{(j)}(t) \Big|_{t = 0} = - \frac{1}{2} \int_M \langle \dot{g}(0), Q_\psi \rangle dv_{g(0)},
\end{align}
where  the scalar product under the integral is understood as the scalar product of tensors induced by the metric $g$,  $\psi$ is a $L^2(g)$-normalised eigenspinor associated to the chosen branch $\lambda^{(j)}$ of $\lambda$, and
\begin{align*}
    Q_\psi(X,Y) = \frac{1}{2} \mathrm{Re} ( \langle X \cdot \nabla^{Cl}_Y \psi, \psi \rangle + \langle Y \cdot \nabla^{Cl}_X \psi, \psi \rangle).
\end{align*}
The bilinear form $Q_\psi$ is called the {\it energy-momentum tensor}, see  \cite{H,M} for other applications and its use in lower bounds for the first positive Dirac eigenvalue. 

Before we can state the definition of a critical metric, we need a way to specify the choice of the spin structure on $M$ which does not rely on the metric. Previously, when studying the conformally critical case, the spin structure $S$ could be defined solely in terms of the complex structure. But now that the complex structure, that is the conformal class, is not fixed, we will use an alternative definition of the spin structure.
\begin{definition}
    Let $\widetilde{\mathrm{Gl}}_2^+(\mathbb{R})$ be the universal cover of the linear positive group $\mathrm{Gl}_2^+(\mathbb{R})$.
    A \emph{classical spin structure} $\mathcal{S}$ is a $\widetilde{\mathrm{Gl}}_2(\mathbb{R})$-principal bundle which is a 2-fold cover of the $\mathrm{GL}_2^+(\mathbb{R})$-principal bundle $\mathcal{F}^+ M$ of positively oriented frames of $TM$. 
\end{definition}
\begin{remark}
The notion of the classical spin structure was  used in \cite{BG} and is essentially equivalent to the definition we have been using previously.   Given a metric $g$, a classical spin structure yields  a bundle $\Spin \, M \to \mathrm{SO}_g M$, by restricting $\mathcal{S} \to \mathcal{F}^+M$ to the bundle $\Spin \, M \subset \mathcal{S}$ covering $\mathrm{SO}_g \subset \mathcal{F}^+M$. In our notation, this means that a classical spin structure $\mathcal{S}$ gives a consistent way to associate to any metric $g$ a spin structure $S$. 
\end{remark}
Similarly to the conformally critical metric, we define a {\em globally} critical metric by the condition that the left and the right derivatives of the eigenvalue have opposite signs.
\begin{definition}
    Let $M$ be an oriented surface endowed with a classical spin structure $\mathcal{S}$. We say that the metric $g$ is \emph{$\bar{\lambda}_{\bi{k}}$-critical} if for any analytic family of smooth metrics $g(t), g(0) = g, t \in (-\epsilon, \epsilon)$, one has
    \begin{align*}
        \left(\frac{d}{dt} \bar\lambda_{\bi{k}}(M, g(t), S(t)) \Big|_{t = 0^-} \right) \left(\frac{d}{dt} \bar\lambda_{\bi{k}}(M, g(t), S(t))\Big|_{t = 0^+}\right) \leq 0,
    \end{align*}
    where $S(t)$ is a spin structure associated to $g(t)$ via the classical  spin structure $\mathcal{S}$.
\end{definition}
 Similarly to Remark~\ref{rmk:critBG}, our definition of critical metrics agrees with that in~\cite{BG} for simple eigenvalues, whereas it is more general for multiple eigenvalues. The following result is an extension of the criticality condition derived in \cite[p. 596]{BG}.

% \textcolor{red}{MISHA, PLEASE EXPAND}
\begin{proposition} \label{prop:crit}
    Let $M$ be a compact oriented surface with spin structure $S$. Suppose that $g$ is a critical metric for $\bar{\lambda}_{\bi{k}}(g,S)$. Then there exist eigenspinors $\psi_1, \dots, \psi_m \in E_{\bi{k}}(g,S_g)$ such that
    \begin{align}
    \label{eq:crit_glob}
        \sum_{j = 1}^m Q_{\psi_j} = \frac{1}{2}\lambda_{\bi{k}}(g,S) g.
    \end{align}
    Conversely, if $\psi_1, \dots, \psi_m \in E_{\bi{k}}(g)$ are eigenspinors satisfying $\sum_{j = 1}^m Q_{\psi_j} = \frac{1}{2}\lambda_{\bi{k}}(g,S) g$ and $\lambda_{\bi{k}} < \lambda_{\bi{k+1}}$ or $\lambda_{\bi{k}} > \lambda_{\bi{k-1}}$ then $(g,S)$ is critical  for $\bar\lambda_{\bi{k}}$.
\end{proposition}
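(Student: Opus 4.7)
My plan is to adapt the argument of Proposition~\ref{prop:conf-crit} to the tensorial setting, replacing the conformal factor $\dot\omega$ by an arbitrary symmetric $2$-tensor $h=\dot g(0)$ and the scalar density $|\psi|^2$ by the energy-momentum tensor $Q_\psi$. The first step is to combine the branchwise formula~\eqref{eq:lambda-derivative} with the variation $\tfrac{d}{dt}\mathrm{Area}(g(t))\big|_{t=0}=\tfrac{1}{2}\int_M\langle g,h\rangle\,dv_g$ to obtain, for every $L^2$-normalized analytic branch $\psi^{(j)}$,
\begin{equation*}
\frac{d\bar\lambda^{(j)}}{dt}(0)=-\frac{A^{1/2}}{2}\int_M \bigl\langle h,\, Q_{\psi^{(j)}}-\tfrac{\lambda_{\bi{k}}}{2A}\,g\bigr\rangle\,dv_g,\qquad A:=\mathrm{Area}(M,g).
\end{equation*}
The tensor $\widetilde{Q}_\psi:=Q_\psi-\tfrac{\lambda_{\bi{k}}}{2A}g$ plays the role of the density $|\psi|^2-\tfrac{1}{A}$ in the conformal case.

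For the forward direction, I would argue as in Proposition~\ref{prop:conf-crit}: criticality forces the one-sided derivatives of $\bar\lambda_{\bi{k}}(g+th)$ at $t=0$ to have opposite signs, hence there exist $L^2$-normalized $\psi_\pm\in E_{\bi{k}}(g)$ on which $\int_M\langle h,\widetilde{Q}_{\psi_\pm}\rangle\,dv_g$ has opposite signs. Since the complex unit sphere in $E_{\bi{k}}(g)$ is path-connected and the map $\psi\mapsto \int_M\langle h,\widetilde{Q}_\psi\rangle\,dv_g$ is continuous, the intermediate value theorem produces a normalized $\psi\in E_{\bi{k}}(g)$ for which this integral vanishes. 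Thus no $h$ strictly separates $\tfrac{\lambda_{\bi{k}}}{2A}g$ from the set $\mathcal{Q}_1:=\{Q_\psi:\psi\in E_{\bi{k}}(g),\,\|\psi\|_{L^2}=1\}$, and Hahn--Banach applied in the finite-dimensional convex hull of $\mathcal{Q}_1$ gives a convex combination $\sum_j c_j Q_{\psi_j}=\tfrac{\lambda_{\bi{k}}}{2A}g$ with $c_j\ge 0$, $\sum c_j=1$. Since $Q_{c\psi}=c^2 Q_\psi$, replacing each $\psi_j$ by $\sqrt{Ac_j}\,\psi_j$ yields the stated identity $\sum_j Q_{\psi_j}=\tfrac{\lambda_{\bi{k}}}{2}g$.

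For the converse, the key ingredient is the pointwise identity
\begin{equation*}
\mathrm{tr}_g\, Q_\psi=\mathrm{Re}\langle\D_g\psi,\psi\rangle=\lambda|\psi|^2,
\end{equation*}
obtained by summing $Q_\psi(e_i,e_i)$ over an orthonormal frame. Taking the $g$-trace of the hypothesis $\sum Q_{\psi_j}=\tfrac{\lambda_{\bi{k}}}{2}g$ and integrating over $M$ gives the automatic normalization $\sum_j\|\psi_j\|_{L^2}^2=A$, using $\lambda_{\bi{k}}\neq 0$ (consistent with the positivity caveat preceding the proposition). Assume for contradiction that $g$ is not $\bar\lambda_{\bi{k}}$-critical and pick $h$ witnessing non-criticality; WLOG both one-sided derivatives of $\bar\lambda_{\bi{k}}$ along $g+th$ are strictly negative. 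Under $\lambda_{\bi{k}}<\lambda_{\bi{k+1}}$, the right derivative of $\bar\lambda_{\bi{k}}$ is the maximum over all branch derivatives at $t=0$, so \emph{every} branch derivative is strictly negative, and by the spectral theorem the real quadratic form $\psi\mapsto \int_M\langle h,\,Q_\psi-\tfrac{\lambda_{\bi{k}}}{2A}\|\psi\|_{L^2}^2\, g\rangle\,dv_g$ is positive definite on $E_{\bi{k}}(g)$. Evaluating at each $\psi_j$ and summing, the left-hand side reduces to $\tfrac{\lambda_{\bi{k}}}{2}\int_M\langle h,g\rangle\,dv_g$ by hypothesis, and the right-hand side to the \emph{same} expression by $\sum\|\psi_j\|^2=A$; this produces the strict inequality $0>0$, the desired contradiction. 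The case $\lambda_{\bi{k}}>\lambda_{\bi{k-1}}$ is handled symmetrically by interchanging the roles of left and right derivatives.

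The main obstacle, as in Proposition~\ref{prop:conf-crit}, is bookkeeping between the analytic branches $\lambda^{(j)}(t)$ and the non-analytic enumerated eigenvalue $\bar\lambda_{\bi{k}}(g(t))$: both the intermediate-value step and the converse require that each one-sided derivative of $\bar\lambda_{\bi{k}}$ be attained by some specific branch spinor, and that under $\lambda_{\bi{k}}<\lambda_{\bi{k+1}}$ the right derivative of $\bar\lambda_{\bi{k}}$ is the \emph{largest} branch derivative at $g$. Once this correspondence is in hand, the remainder of the argument is finite-dimensional Hahn--Banach separation on the space of symmetric $2$-tensors restricted to the eigenspace, in complete parallel with the Laplacian strategies of~\cite{Nad96, ElIl03, ElIl07}.
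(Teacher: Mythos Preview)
Your proposal is correct and follows essentially the same approach as the paper: use the branch derivative formula~\eqref{eq:lambda-derivative}, find for each test tensor an eigenspinor with vanishing pairing, then apply Hahn--Banach in the finite-dimensional span of the $Q_\psi$'s; for the converse, use that under $\lambda_{\bi{k}}<\lambda_{\bi{k+1}}$ the right derivative is the largest branch derivative, so all branch derivatives are strictly negative, and derive a contradiction from the hypothesis. The only differences are cosmetic: the paper restricts to variations $\eta$ with $\int_M\langle\eta,g\rangle\,dv_g=0$ and separates $\tfrac{1}{2}\lambda_{\bi{k}}g$ from the cone $\{Q_\psi\}$, whereas you keep $h$ general and separate $\tfrac{\lambda_{\bi{k}}}{2A}g$ from the normalized set $\mathcal{Q}_1$; and your converse is written out in full while the paper leaves it to the reader.
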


\begin{proof} The proof is analogous to the proof of Proposition~\ref{prop:conf-crit}.
    If $g$ is a critical metric, then  formula \eqref{eq:lambda-derivative}  yields that for any symmetric 2-tensor field $\eta$ with $\int_M \langle \eta, g \rangle dv_g = 0$, there exists $\psi \in E_{\bi{k}}(g, S)$, such that $\int_M \langle \eta, Q_\psi \rangle dv_g = 0$. 

    Let $W$ be the convex hull of $\{Q_\psi, \psi \in E_{\bi{k}}(g,S)\}$ in the space of bilinear forms. Then as before, we argue to show that $\frac{1}{2}\lambda_{\bi{k}} g \in W$. If not, by the Hahn--Banach theorem, there exists a symmetric 2-tensor field $\eta$ such that 
    \begin{align*}
        &\int_M \langle \eta, \lambda_{\bi{k}} g \rangle dv_g > 0 \\
        &\int_M \langle \eta, Q_\psi \rangle dv_g \leq 0, \qquad \text{for all } \psi \in E_{\bi{k}}(g,S).
    \end{align*}
    Setting $\eta_0 = \eta - \frac{1}{2 \vol(M)} \int_M \langle \eta, g \rangle dv_g g$, we have $\int_M \langle \eta_0, g \rangle dv_g = 0$,  and thus there exists $\psi \in E_{\bi{k}}(g,S)$ such that 
    \begin{align*}
        0 &= \int_M \langle \eta_0, Q_\psi \rangle dv_g \\
          &= \int_M \langle \eta, Q_\psi \rangle dv_g - \frac{1}{2 \vol(M)} \int_{M} \langle \eta, g \rangle dv_g \int_M \langle g, Q_\psi \rangle dv_g
          < 0 
    \end{align*}
    where we used that the trace of $Q_\psi$ is $\lambda_{\bi{k}} |\psi|^2$ since $\psi$ is an eigenspinor. This contradiction concludes the proof in one direction. The proof of the converse statement is also similar to the one in Proposition ~\ref{prop:conf-crit} and is left to the reader. 
\end{proof}

\subsection{Critical metrics and minimal surfaces}

In this section we give a geometric interpretation of the condition~\eqref{eq:crit_glob}, connecting the optimisation of Dirac eigenvalues to minimal surfaces in projective spaces. 

\begin{theorem} 
\label{thm:extful}
    Let $(M,g)$ be a compact oriented surface with spin structure $S$.  Suppose that $\psi_1, \ldots, \psi_m$ are eigenspinors on $M$ with $\D_g \psi_j = \lambda \psi_j, \lambda \neq 0$ and $\sum\limits_{j = 1}^m Q_{\psi_j} = \frac{1}{2}\lambda g$. Then the map $\Psi \colon M \to \CP^{2m-1}$ given in homogeneous coordinates by $[\psi_{1+} : \bar\psi_{1-} : \ldots : \psi_{m+} : \bar\psi_{m-}]$  is a branched minimal immersion.
\end{theorem}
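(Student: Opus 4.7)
The strategy is to split the hypothesis $\sum_{j=1}^m Q_{\psi_j} = \frac{1}{2}\lambda g$ into its trace and traceless parts and interpret each geometrically through $\Psi$. Recall that a harmonic map from a Riemann surface to a Riemannian manifold is a branched minimal immersion precisely when it is weakly conformal, i.e.\ when its Hopf differential vanishes. Accordingly the plan is: (i) use the $g$-trace of the hypothesis to recover $\sum_j |\psi_j|^2 = 1$, so that Theorem~\ref{prop:harmo} already gives harmonicity of $\Psi$; and (ii) identify the $(2,0)$-component of $\sum_j Q_{\psi_j}$ with a nonzero multiple of the Hopf differential of $\Psi$, so that the traceless content of the hypothesis is exactly weak conformality.

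Step~(i) is immediate from the relation $\mathrm{tr}_g Q_\psi = \lambda|\psi|^2$ already noted in the proof of Proposition~\ref{prop:crit}, since $\mathrm{tr}_g g = 2$. For step~(ii), the $(2,0)$-component of a traceless symmetric 2-tensor on $M$ in a local holomorphic coordinate $z$ determines the tensor, while $\frac12\lambda g$ is of pure type $(1,1)$; hence the hypothesis modulo its trace is equivalent to $\sum_j Q_{\psi_j}(\del_z,\del_z) = 0$. Writing $\psi_j = (f_{j+}s_0,\bar f_{j-}\bar s_0)$ with $s_0\otimes s_0 = dz$, a direct computation using the formulas of Section~5.1 yields
\begin{align*}
Q_{\psi_j}(\del_z,\del_z) \;=\; \tfrac12\bigl(f_{j+}\,\del_z f_{j-} - f_{j-}\,\del_z f_{j+}\bigr),
\end{align*}
with all connection-form contributions proportional to $\del_z\log|s_0|^2$ cancelling, confirming that the right-hand side is an honest quadratic differential. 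On the other hand, by \eqref{eq:FS} the Hopf differential of $\Psi$ is proportional to $\langle\pi_{L^\perp}\del_zF,\pi_{L^\perp}\del_{\bar z}F\rangle\,|F|^{-2}\,dz^2$, where $F=(f_{1+},f_{1-},\ldots,f_{m+},f_{m-})$ is a local lift and $L=\C F$. From \eqref{eq:eigsp_qharm} one has $\del_{\bar z}F\perp F$, so $\pi_{L^\perp}\del_{\bar z}F=\del_{\bar z}F$ and $\langle\pi_{L^\perp}\del_zF,\del_{\bar z}F\rangle=\langle\del_zF,\del_{\bar z}F\rangle$; substituting $\del_{\bar z}F = \frac{\lambda}{2|s_0|^2}I(F)$ and expanding component-wise gives
\begin{align*}
\langle \del_z F,\,\del_{\bar z}F\rangle \;=\; \frac{\lambda}{|s_0|^2}\sum_{j=1}^m Q_{\psi_j}(\del_z,\del_z).
\end{align*}
Thus $\sum_j Q_{\psi_j}(\del_z,\del_z) = 0$ is equivalent to vanishing of the Hopf differential of $\Psi$, i.e.\ to weak conformality. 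Combined with harmonicity, the classical fact that a weakly conformal harmonic map from a Riemann surface is a branched minimal immersion (see e.g.~\cite{EW2}) completes the proof.

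The main technical obstacle is the pointwise identity for $Q_{\psi_j}(\del_z,\del_z)$. Because $Q_\psi$ is defined via $\mathrm{Re}\langle X\cdot\nabla^{Cl}_Y\psi,\psi\rangle$ as a \emph{real} symmetric bilinear form, its $\C$-bilinear extension to complex vectors is not simply $\langle\del_z\cdot\nabla^{Cl}_{\del_z}\psi,\psi\rangle$: this latter expression still carries coordinate-dependent contributions from $\nabla^{Cl}s_0 = (\del_z\log|s_0|^2)s_0$ and fails to transform as a quadratic differential. The correct extension symmetrizes it with the conjugate companion $\overline{\langle\del_{\bar z}\cdot\nabla^{Cl}_{\del_{\bar z}}\psi,\psi\rangle}$, after which the spurious connection terms cancel. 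Once this coordinate identity is in hand, linking it to the Hopf differential of $\Psi$ is an algebraic manipulation using only the eigenvalue equation and the explicit form of the quaternionic structure $I$.
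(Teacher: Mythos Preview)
Your proposal is correct and follows essentially the same route as the paper: both arguments take the trace of $\sum_j Q_{\psi_j}=\tfrac{\lambda}{2}g$ to recover $\sum_j|\psi_j|^2=1$ and hence harmonicity via Theorem~\ref{prop:harmo}, then compute the $(2,0)$-part (equivalently $Q_{\psi_j}(e_1,e_1)-Q_{\psi_j}(e_2,e_2)-2iQ_{\psi_j}(e_1,e_2)$ in the paper's orthonormal frame) to obtain the local identity $\sum_j(f_{j+}\del_z f_{j-}-f_{j-}\del_z f_{j+})=0$, convert it via~\eqref{eq:eigsp_qharm} into $\langle\del_z F,\del_{\bar z}F\rangle=0$, and conclude weak conformality. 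Your Hopf-differential framing and your remark on the need to symmetrize with $\overline{\langle\del_{\bar z}\cdot\nabla^{Cl}_{\del_{\bar z}}\psi,\psi\rangle}$ when complexifying $Q_\psi$ are exactly the conceptual content behind the paper's frame computation.
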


\begin{proof}[Proof of Theorem \ref{thm:extful}]
    Let $e_1,e_2$ be a local orthonormal basis of $TM$. Then 
    \begin{multline*}
    \lambda_k = \sum_{j=1}^m(Q_{\psi_j}(e_1,e_1) +  Q_{\psi_j}(e_2,e_2)) = \\
    \frac{1}{2}\sum_{j=1}^m\mathrm{Re}(\la\D_g\psi_j,\psi_j\ra +\la\psi_j,\D_g\psi_j\ra ) = \lambda_k\sum_{j=1}^m|\psi_j|^2.
    \end{multline*}
   Hence, by Theorem~\ref{prop:harmo}, the map $\Psi$ is harmonic.
    
    For the conformality of $\Psi$, writing $\del_z = \frac{1}{2} (e_1 - i e_2)$, we have
    \begin{align*}
        0 &= g(e_1, e_1) - g(e_2,e_2) - 2ig(e_1,e_2) \\
          &= \sum_{j = 1}^m Q_{\psi_j}(e_1,e_1) - Q_{\psi_j}(e_2,e_2) - 2iQ_{\psi_j}(e_1,e_2) \\
          &= - \frac{1}{2} \sum_{j = 1}^m \big( \langle \nabla_{e_1} \psi_j, e_1 \cdot \psi_j \rangle + \langle e_1 \cdot \psi_j, \nabla_{e_1} \psi_j \rangle\big) 
          + \frac{1}{2}\sum_{j = 1}^m\big(\langle \nabla_{e_2} \psi_j, e_2 \cdot \psi_j \rangle + \langle e_2 \cdot \psi_j, \nabla_{e_2} \psi_j \rangle \big) \\
          & \quad + \frac{i}{2}\sum_{j = 1}^m \big( \langle \nabla_{e_1} \psi_j, e_2 \cdot \psi_j \rangle + \langle \nabla_{e_2} \psi_j, e_1 \cdot \psi_j \rangle 
           + \langle e_2 \cdot \psi_j, \nabla_{e_1} \psi_j \rangle + \langle e_1 \cdot \psi_j, \nabla_{e_2} \psi_j \rangle \big)  \\
          &= \sum_{j = 1}^m- \langle \nabla_{\del_z} \psi_j, e_1 \cdot \psi_j \rangle + i\langle  \nabla_{\del_z} \psi_j, e_2 \cdot \psi_j \rangle - \langle e_1 \cdot \psi_j, \nabla_{\del_\bz} \psi_j \rangle
          +i \langle e_2 \cdot \psi_j, \nabla_{\del_\bz} \psi_j \rangle \\
          &= - 2 \sum_{j = 1}^m  \big(\langle \nabla_{\del_z} \psi_j, \del_\bz \cdot \psi_j \rangle + \langle \del_z \cdot \psi_j, \nabla_{\del_\bz} \psi_j \rangle \big). 
    \end{align*}

    Writing $\psi_j = (f_{j+} s_0, \bar{f}_{j-} \bar{s}_0)$, and using that $\del_\bz \cdot \begin{pmatrix}\psi_+\\ \psi_-\end{pmatrix} = |s_0|^2 \begin{pmatrix} 0 & 1 \\ 0 & 0 \end{pmatrix}\begin{pmatrix}\psi_+\\ \psi_-\end{pmatrix}$ and $\del_z \cdot \begin{pmatrix}\psi_+\\ \psi_-\end{pmatrix}= |s_0|^2 \begin{pmatrix} 0 & 0 \\ -1 & 0 \end{pmatrix}\begin{pmatrix}\psi_+\\ \psi_-\end{pmatrix}$ in this basis, 
    \begin{align*}
 0 = \sum_{j = 1}^m  \del_z f_{j+}\overline{(-\bar{f}_{j-})} + f_{j+} \overline{\del_\bz \bar{f}_{j-}} = \sum_{j = 1}^m - f_{j-} \del_z f_{j+} + f_{j+} \del_z f_{j-}.
    \end{align*}
    Writing $F = (f_{1+}, f_{1-}, \dots, f_{m+}, f_{m-})$ and $I(a_{1+}, a_{1-}, \dots) = (-\bar{a}_{1-}, \bar{a}_{1+}, \dots)$, we just proved $\langle \del_z F, I(F) \rangle_{\CC^{2m}}=0$.
    By the eigenvalue equation~\eqref{eq:eigsp_qharm}, $I(F) = \frac{2|s_0|^2}{\lambda} \del_\bz F$, therefore,
\begin{equation}
\label{eq:weak_conf}
\langle \del_z F, \del_\bz F \rangle_{\CC^{2m}}=0.
\end{equation}
At the same time, a harmonic map $\Psi$ is a branched minimal immersion if and only if it is weakly conformal \cite{GOR73}, i.e.  $\la d_\C\Psi(\del_z),d_\C\Psi(\del_{\bar z})\ra = 0$, or, equivalently,  $\la \del \Psi(\del_z),\bar\del\Psi(\del_{\bar z})\ra = 0$. Using the definition of the metric on $\CP^{2m-1}$, one has
$$
\la \del \Psi(\del_z),\bar\del\Psi(\del_{\bar z})\ra = 4|s_0|^4\frac{\la\pi_{L^\perp}\del_zF, \pi_{L^\perp}\del_{\bar z}F\ra}{|F|^2} = 4|s_0|^4\frac{\la\del_zF, \pi_{L^\perp}\del_{\bar z}F\ra}{|F|^2},
$$
where $\pi_{L^\perp}$ is an orthogonal projection to the orthogonal complement to the one-dimensional space spanned by $F$. Finally, the eigenvalue equation~\eqref{eq:eigsp_qharm} implies that $\pi_{L^\perp}\del_{\bar z}F =\del_{\bar z}F$, so that~\eqref{eq:weak_conf} yields that $\Psi$ is weakly conformal.   
\end{proof}

We conclude the article by stating the analogue of Proposition~\eqref{prop:spin-harmonic} for a globally critical metric.

\begin{definition}
We say that a map $\Psi\colon (M,\mC)\to\CP^{2m-1}$ is a {\em quaternionic branched minimal immersion} if $\Psi$ is a quaternionic harmonic map and a branched minimal immersion.
\end{definition}
\begin{proposition}
\label{prop:globcrit}
    Let $M$ be an oriented surface and $\mathcal{S}$ be a classical spin structure. Suppose that $g$ is $\bar\lambda_{\bi{k}}$-critical, and $\bar\lambda_{\bi{k}}(M,g,S)>0$.
 Then there exists a collection $\psi_j = (\psi_{j+},\psi_{j-})$, $j=1,\ldots, m$ of $\lambda_{\bi{k}}$-eigenspinors such that the map $\Psi\colon(M,\mC)\to\CP^{2m-1}$ given in homogeneous coordinates by
$$	
\Psi = [\psi_{1+} : \bar\psi_{1-}: \dots : \psi_{m+}: \bar\psi_{m-}]
$$	
is a quaternionic branched minimal immersion. Moreover, $\bar\lambda_{\bi{k}}(M,g,S)^2 = E^{(0,1)}_g(\Psi)$ and $g = \alpha g_\Psi = \alpha |\bar\del\Psi|_g^2g$ for some $\alpha>0$.	

Conversely, let $\Psi\colon (M,\mC)\to\CP^{2m-1}$ be a quaternionic branched minimal immersion,  and let $S_\Psi$ be the spin structure induced by $\Psi$ in the sense of Proposition~\ref{prop:H_harm}. Then the metric $g_\Psi$ is $\lambda_{\bi{k}}$-critical for the classical spin structure $\mathcal{S}_\Psi$ corresponding to $S_\Psi$. The index $\bi{k}-1$ is the number of eigenvalues $\lambda_{\bi{j}}(M,g_\Psi,S_\Psi)$ satisfying $0<\lambda_{\bi{j}}(M,g_\Psi,S_\Psi)<1$. Furthermore, one has $\bar\lambda_{\bi{k}}(M,g_\Psi,S_\Psi)^2 = E^{(0,1)}_\mC(\Psi)>0$.
\end{proposition}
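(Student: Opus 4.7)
The plan is to prove Proposition \ref{prop:globcrit} by combining the global criticality characterisation of Proposition \ref{prop:crit} with the map-theoretic results of Theorem \ref{prop:harmo} and Theorem \ref{thm:extful}, together with the reverse construction of Proposition \ref{prop:H_harm}. The key observation gluing these ingredients together is that in dimension two the trace of $Q_\psi$ equals $\lambda|\psi|^2$, so the critical equation $\sum Q_{\psi_j} = \tfrac{1}{2}\lambda g$ decouples cleanly into a trace condition, $\sum |\psi_j|^2 = 1$, and a traceless condition which will turn out to be equivalent to weak conformality of the associated map.

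For the forward direction, assume $g$ is $\bar\lambda_{\bi{k}}$-critical with $\lambda_{\bi{k}}(M,g,S)>0$. Proposition \ref{prop:crit} provides eigenspinors $\psi_1,\dots,\psi_m \in E_{\bi{k}}(g,S)$ with $\sum Q_{\psi_j} = \tfrac{1}{2}\lambda_{\bi{k}} g$. Taking the trace of both sides yields $\sum|\psi_j|^2 = 1$, so Theorem \ref{prop:harmo} combined with the discussion in Section \ref{sec:eigmaps} shows that the associated map $\Psi$ is quaternionic harmonic. Applying Theorem \ref{thm:extful} to the same eigenspinors then delivers the branched minimal immersion property. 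The identity $\bar\lambda_{\bi{k}}(M,g,S)^2 = E_g^{(0,1)}(\Psi)$ and the proportionality $g = \alpha g_\Psi$ are inherited directly from Proposition \ref{prop:spin-harmonic}, which applies because global criticality restricts to conformal criticality along conformal variations.

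For the converse direction, given a quaternionic branched minimal immersion $\Psi\colon (M,\mC) \to \CP^{2m-1}$, Proposition \ref{prop:H_harm} furnishes a spin structure $S_\Psi$, a metric $g_\Psi$ and eigenspinors $\psi_j$ of eigenvalue $1$ with $\sum|\psi_j|_{g_\Psi}^2 = 1$. I would then upgrade this to $\sum Q_{\psi_j} = \tfrac{1}{2} g_\Psi$ by decomposing both sides into trace and trace-free parts. The trace sides agree (both equal $1$), while the trace-free parts, in dimension two, are captured by the quantity $\sum\bigl(Q_{\psi_j}(e_1,e_1) - Q_{\psi_j}(e_2,e_2) - 2i Q_{\psi_j}(e_1,e_2)\bigr)$ in any local orthonormal frame. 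The calculation in the proof of Theorem \ref{thm:extful}, read in reverse and using the eigenvalue equation \eqref{eq:eigsp_qharm}, shows that this expression equals a nonzero multiple of $\la \del_z F, \del_{\bar z} F\ra$, which vanishes precisely because $\Psi$ is weakly conformal (equivalently a branched minimal immersion, by \cite{GOR73}). The converse part of Proposition \ref{prop:crit} then yields $\bar\lambda_{\bi{k}}$-criticality of $g_\Psi$ for the appropriate index $\bi{k}$.

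The main technical obstacle is running the chain of identities in the proof of Theorem \ref{thm:extful} in reverse while carefully tracking the normalisations between the Clifford-theoretic formulation of $Q_\psi$ and the holomorphic-coordinate expression in terms of $F$ and $I(F)$; once these identifications are audited, the rest is bookkeeping. A secondary point concerns the index $\bi{k}$: since the spinors $\psi_j$ realise the eigenvalue $1$ of $\mD_{g_\Psi}$, the squared value $1$ occupies the $\bi{k}$-th slot in the increasing enumeration, so $\bi{k}-1$ equals the number of strictly smaller positive eigenvalues. Finally, the strict positivity $\bar\lambda_{\bi{k}}(M,g_\Psi,S_\Psi)^2 = E_\mC^{(0,1)}(\Psi)>0$ follows automatically from $\bar\del\Psi\not\equiv 0$, which is part of the definition of a quaternionic harmonic map via the condition $L_{-1} = I(L_0)$ in Definition \ref{def:spinharm}.
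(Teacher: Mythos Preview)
Your proposal is correct and follows essentially the same approach as the paper: in the forward direction you apply Proposition~\ref{prop:crit}, take the trace to extract $\sum|\psi_j|^2=1$, and invoke Theorem~\ref{thm:extful}; in the converse you run the computation of Theorem~\ref{thm:extful} backwards to deduce $\sum Q_{\psi_j}=\tfrac{1}{2}\lambda_{\bi{k}}g_\Psi$ from weak conformality and then appeal to the converse of Proposition~\ref{prop:crit}. The only cosmetic difference is that for the energy identity and $g=\alpha g_\Psi$ you route through Proposition~\ref{prop:spin-harmonic} via the observation that global criticality implies conformal criticality, whereas the paper appeals directly to the computation at the end of Section~\ref{sec:geomcrit}; both paths are valid.
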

\begin{proof}
To prove the first part, observe that we showed at the beginning of Section~\ref{sec:eigmaps} that the maps by eigenspinors satisfying $\sum|\psi_j|^2 = 1$ are quaternionic harmonic, and  hence $\Psi$ is quaternionic harmonic. Furthermore, it follows from Theorem~\ref{thm:extful} that $\Psi$ is a branched minimal immersion. The proofs of the statements about $g$ and the value of $\bar\lambda_{\bi{k}}(M,g,S)$ are analogous to the first part of Proposition~\ref{prop:spin-harmonic}.

To prove the converse, observe that if $\Psi$ is a quaternionic branched minimal immersion, then %it is quaternionic harmonic and, thus, 
according to Proposition~\ref{prop:H_harm} it is the map by $g_\Psi$-eigenspinors satisfying $\sum|\psi_j|_{g_\Psi}^2 = 1$. Tracing the computations in the proof of Theorem~\ref{thm:extful} in the opposite direction, we observe that weak conformality of $\Psi$ implies 
$$
\sum_{j=1}^m Q_{\psi_j}(e_1,e_1) = \sum_{j=1}^m Q_{\psi_j}(e_2,e_2),\qquad \sum_{j=1}^m Q_{\psi_j}(e_1,e_2) = 0,
$$ 
where $e_1,e_2$ is $g_\Psi$-orthonormal basis. At the same time, by the eigenspinor equation
\begin{equation*}
\begin{split}
&\sum_{j=1}^m(Q_{\psi_j}(e_1,e_1) +  Q_{\psi_j}(e_2,e_2)) =\\
    &\frac{1}{2}\sum_{j=1}^m\mathrm{Re}(\la\D_g\psi_j,\psi_j\ra +\la\psi_j,\D_g\psi_j\ra ) = \lambda_{\bi{k}}\sum_{j=1}^m|\psi_j|_{g_\Psi}^2 = \lambda_{\bi{k}}.
 \end{split}
\end{equation*}
Thus,
$$
\sum_{j=1}^m Q_{\psi_j} = \frac{\lambda_{\bi{k}}}{2}g_{\Psi},
$$
which by Proposition~\ref{prop:crit} means that $(g_\Psi,S_\Psi)$ is critical for $\bar\lambda_{\bi{k}}$.
\end{proof}

\begin{remark}
It is proved in~\cite{GG} that if $M$ is not a  sphere,
$$
\inf_{\mC}\bar\lambda_{\bi{1}}(M,\mC,S) = 0,
$$
and hence Proposition \ref{prop:globcrit} does not yield any global $\bar\lambda_{\bi{1}}$-minimisers.
%It follows from the last statement of Proposition \ref{prop:globcrit}  that the global $\bar\lambda_{\bi{1}}$-minimisers  (if they exist) correspond to holomorphic maps $\Psi$. We note that due to a possible shift in enumeration,  the global $\bar\lambda_{\bi{1}}$-minimisers do not necessarily coincide with the $\bar\lambda_1$-minimisers in the corresponding conformal class, and may instead correspond to an eigenvalue lying in the kernel.
It would be interesting to find out if Proposition~\ref{prop:globcrit} can be used to construct examples of quaternionic branched minimal immersions of surfaces of positive genus. It is likely  that the proof of~\cite{GG} can be generalised to higher eigenvalues. If true, it means that in order to obtain quaternionic branched minimal immersions from Proposition~\ref{prop:globcrit}, one would need to look for saddle points rather than global minima. 
\end{remark}

\subsection*{Acknowledgments} Part of this work was completed while A.M. was a Ph.D. student at the Universit\'e de Montr\'eal, and was  supported by FRQNT grant B2-272578. He was also  supported  by  EPSRC grant 
EP/T030577/1. The research of M.K. was partially supported by the NSF grant DMS-2104254. The research of I.P. was partially supported by NSERC and FRQNT.

\end{document}